\documentclass[11pt]{amsart}
\RequirePackage[l2tabu, orthodox]{nag}
\usepackage[T1]{fontenc}
\usepackage[utf8]{inputenc}
\usepackage{lmodern}
\usepackage[english]{babel}
\usepackage{tikz-cd} 
\usepackage{amsmath}
\usepackage{amsfonts}
\usepackage{amssymb}
\usepackage{enumerate}
\usepackage[alphabetic]{amsrefs}
\usepackage{amsthm}

\usepackage[margin=2.5cm]{geometry}
\newtheorem{theorem}{Theorem}[section]
\newtheorem{lemma}[theorem]{Lemma}

\newtheorem*{claim*}{Claim}

\newtheorem*{theorem*}{Theorem}
\newtheorem{corollary}[theorem]{Corollary}

\theoremstyle{definition}
\newtheorem{definition}[theorem]{Definition}

\theoremstyle{remark}

\newtheorem{remark}[theorem]{Remark}
\newtheorem*{remark*}{Remark}

\numberwithin{equation}{section}
\numberwithin{figure}{section}

\def\NN{{\mathbb N}}

\def\RR{{\mathbb R}}
\newcommand{\T}{\mathbb{S}^1}
\def\ZZ{{\mathbb Z}}

\def\idid{{\mathrm{id}}}

\def\homeo{{\mathrm{Homeo}}}

\def\psl{{\mathrm{PSL}}}
\def\SO{{\mathrm{SO}}}
\def\fix{{\mathrm{Fix}}}
\def\stab{{\mathrm{Stab}}}

\renewcommand{\setminus}{\smallsetminus}
\def\gap{{\mathrm{Gap}}}
\def\core{{\mathrm{Core}}}
\newcommand{\cI}{\mathcal{I}}
\newcommand{\cJ}{\mathcal{J}}

\newcommand{\eqnum}{\refstepcounter{equation}\textup{\tagform@{\theequation}}}

\def\kovacevic{{Kova\v{c}evi\'{c}}}

\usepackage{indentfirst}

\usepackage[colorlinks=true,linkcolor=blue,citecolor=magenta]{hyperref}

\title{Amalgamated free products of circle actions with a bounded number of fixed points}

\author{Jo\~ao Carnevale}
\address{Instituto de Matem\'atica, Universidade Federal do Rio de Janeiro,
	Cidade Universit\'aria -- Ilha do Fund\~ao, Rio de Janeiro, RJ 21945-909, Brazil}
\email{carnevale@im.ufrj.br}

\date{\today}
\subjclass[2020]{Primary 37E10, 57M60; Secondary 37C85, 37B05}
\keywords{group actions on the circle, amalgamated free products, M\"obius-like groups, convergence groups, ping-pong partitions, bounded number of fixed points}

\begin{document}
	
	\begin{abstract}
		Inspired by constructions of Kova\v{c}evi\'{c}, we introduce the amalgamated free product of
		circle actions, obtained by blowing up two actions along prescribed orbits and rearranging
		the inserted intervals. Under natural orbit and index assumptions, we prove that this
		construction is well defined, yields a minimal action on the circle, and is unique up to
		topological conjugacy.
		
		We then study its dynamical properties. Using a proper ping-pong partition arising from the
		construction, we obtain criteria ensuring that the resulting action still has a uniformly
		bounded number of fixed points, and in particular at most \(2n\) fixed points. We also
		give sufficient conditions for the resulting action to remain M\"obius-like and for it not
		to be topologically conjugate to a subgroup of any finite lift \(\psl^{(k)}(2,\RR)\).
	\end{abstract}
	
	\maketitle
	
	\section{Introduction}
	
	Fixed points provide one of the most basic dynamical invariants of a group action on a
	one-manifold. It is therefore natural to ask what can be said about a subgroup of
	\(\homeo_+(\T)\) when every nontrivial element has at most \(N\) fixed points. Before turning
	to the circle case and to the construction developed in this article, let us briefly recall the
	corresponding picture for actions on the line.

	For actions on the line and for very small values of \(N\), there are well-known geometric
	models. H\"older's theorem~\cite{Holder} identifies translations as the basic model for free
	actions; see also \cite[Section~2]{ghys-circle}, while Solodov's theorem~\cite{Solodov} shows that affine actions govern the case of at
	most one fixed point. In a recent extension of Solodov's theorem~\cite{carnevale2022groups},
	the same affine picture was shown to remain valid for actions on the line with at most two fixed
	points: every such action is either elementary or semi-conjugate to an affine action.
	Moreover, under \(C^1\) regularity, one can combine existing rigidity results to promote this semi-conjugacy to a topological	conjugacy in the non-abelian case; see
	\cite[Remark~1.6]{carnevale2022groups}.
	
	On the circle, the corresponding basic examples are given by the rotation group \(\SO(2)\) 
	in the free case and by the M\"obius group
	\(\psl(2,\RR)\) in the case of at most two fixed points. More generally, the finite lifts
	\(\psl^{(k)}(2,\RR)\) provide natural examples in which every nontrivial element has at most \(2k\)
	fixed points. These examples suggest that finite lifts of M\"obius actions should play a central role in
	any attempt to understand circle actions with bounded numbers of fixed points.
	
	However, the circle case is subtler than the line case. A subgroup of \(\homeo_+(\T)\) is called \emph{M\"obius-like} if each of its elements is	individually topologically conjugate to an element of \(\psl(2,\RR)\),and as shown by \kovacevic~\cite{Ko2}, this elementwise condition does not force the whole action to be topologically conjugate to, or even semi-conjugate to, a subgroup of \(\psl(2,\RR)\): there
	exist finitely presented minimal M\"obius-like groups which are not M\"obius groups. More precisely, her construction starts from a M\"obius action and a
	cyclic action generated by a single element, and combines them through a ping-pong argument after
	blowing up a suitable orbit of the M\"obius action. Thus, the obstruction to global M\"obius
	behavior already appears in a very special free-product situation.
	
	This also fits a conjectural picture, attributed to Bonatti, according to which if
	\(G\leq \homeo_+(\T)\) is minimal, every nontrivial element of \(G\) has at most two fixed
	points, and the action of \(G\) is not semi-conjugate to a M\"obius action, then \(G\)
	should split as an amalgamated product over an abelian subgroup; see
	\cite[Conjecture~1.12]{theseJoao}. In a related direction, Kim
	and Triestino~\cite{KimTriestino2025} recently obtained explicit ping-pong partitions
	for hyperbolic-like groups with non-cyclic point stabilizers. The amalgamated free product
	construction developed in this article may be viewed as a systematic framework in that direction.
	
	One of the main purposes of the present article is to formalize and generalize this picture. We
	introduce the \emph{amalgamated free product of circle actions}, obtained by blowing up two circle
	actions along prescribed orbits and rearranging the inserted intervals so that the core of one
	action lies in the complement of the blown-up intervals of the other. Under natural orbit and
	index assumptions, we prove that this construction exists, yields a minimal action on the circle,
	and is unique up to topological conjugacy; see Theorem~\ref{t.action_product}. In this way, the
	amalgamated free product becomes a well-defined dynamical operation, rather than an ad hoc
	surgery.
	
	Once this construction is in place, a first problem is to control the number of fixed points of the
	elements in the resulting group. Using the proper ping-pong partition arising from the construction, 
	we obtain criteria ensuring that
	the amalgamated product still acts with uniformly bounded numbers of fixed points. In particular,
	Theorem~\ref{t.main_examples_2n} shows that, under suitable assumptions on the stabilizers and the
	chosen blown-up orbits, the resulting action has at most \(2n\) fixed points. This provides a general
	mechanism for producing new actions with bounded fixed-point sets from previously given ones.
	
	A second problem is to detect when the resulting action is not topologically conjugate to any finite
	lift \(\psl^{(k)}(2,\RR)\).Here the convergence group theorem of Tukia, Gabai, and Casson--Jungreis
	\cite{Tukia,Gabai,Casson-Jungreis} provides the guiding criterion. Our first obstruction is based on nondiscreteness:
	if one of the factors acts nondiscretely on the circle, then the amalgamated product contains
	sequences whose limit behavior is incompatible with the convergence property, and hence cannot be
	topologically conjugate to any subgroup of \(\psl^{(k)}(2,\RR)\); see
	Theorem~\ref{t.main_examples_non_psl}. This recovers, in a systematic way, the mechanism already
	visible in \kovacevic's examples.
	
	The second obstruction is adapted to the finite-lift setting. We prove a descent principle showing
	that if a minimal amalgamated product is conjugate to a subgroup of \(\psl^{(k)}(2,\RR)\), then the
	same is true for the original factors. We then combine this with a conical-limit-point argument:
	even when one starts from a factor already conjugate to a subgroup of \(\psl^{(k)}(2,\RR)\), the
	presence of a suitable \(k\)-lifted conical limit point in the blown-up orbit forces the resulting
	minimal amalgamated product to fail being topologically conjugate to any finite lift of a M\"obius
	group; see Theorem~\ref{t.conical_examples_non_psl}.
	
	The overall picture is that the amalgamated product provides a structured theory of
	\kovacevic-type examples. Rather than producing isolated counterexamples one at a time, it yields a
	general framework for constructing minimal circle actions with bounded numbers of fixed points and
	with prescribed failures of the convergence property. In this sense, the present work gives further
	evidence for Bonatti's conjectural picture that minimal non-M\"obius actions with at most two fixed
	points should arise from a splitting phenomenon.
	
	The article is organized as follows. In Section~\ref{s.preliminaries} we recall the basic notions on
	semi-conjugacy, blow-ups, and M\"obius-like actions. Section~\ref{s.ping-pong} introduces proper
	ping-pong partitions and proves the fixed-point bounds that will be used later. In
	Section~\ref{s.amalgamated_product} we define the amalgamated free product of circle actions and
	prove its existence and uniqueness up to conjugacy. 
	Finally, the last section is devoted to the dynamical properties of the resulting amalgamated
	free product action. We first study hypotheses ensuring that the action is M\"obius-like, and
	then turn to sufficient conditions guaranteeing that it is not topologically conjugate to any
	subgroup of \(\psl^{(k)}(2,\RR)\), first through nondiscreteness and then through conical limit
	points.

	\section{Preliminaries}\label{s.preliminaries}
	
	Throughout the paper, we write $\T=\RR/\ZZ$ and denote by $\homeo_+(\T)$ the group of
	orientation-preserving homeomorphisms of the circle, endowed with the usual $C^0$ topology.
	
	If $x,y\in\T$ are distinct, we denote by $(x,y)$ the open positively oriented interval from $x$
	to $y$. The notations $[x,y]$, $(x,y]$, and $[x,y)$ are defined similarly. For a subset
	\(A\subset \T\), we denote its interior by \(A^\circ\) and its boundary by \(\partial A\).
	
	Every continuous map $h:\T\to\T$ of degree one admits a lift $\widetilde h:\RR\to\RR$,
	unique up to integer translation, satisfying
	\[
	\widetilde h(x+1)=\widetilde h(x)+1
	\qquad\text{for all }x\in\RR.
	\]
	In particular, the universal cover of $\homeo_+(\T)$ is identified with the group
	$\homeo_{\ZZ}(\RR)$ of homeomorphisms of $\RR$ commuting with integer translations.
	
	A continuous map $h:\T\to\T$ of degree one is called \emph{monotone} if one
	(equivalently, any) of its lifts $\widetilde h:\RR\to\RR$ is non-decreasing.
	We denote by $\gap(h)$ the set of points where $h$ is locally constant, and set
	\[
	\core(h):=\T\smallsetminus \gap(h).
	\]
	
	Let $G$ be a group and let $\varphi,\psi:G\to\homeo_+(\T)$ be two actions on the circle.
	We say that $\varphi$ is \emph{semi-conjugate} to $\psi$ if there exists a continuous monotone
	map $h:\T\to\T$ of degree one such that
	\[
	h\circ \varphi(g)=\psi(g)\circ h
	\qquad\text{for every }g\in G.
	\]
	The map $h$ is called a \emph{semi-conjugacy}. If $h$ is a homeomorphism, then $\varphi$ and
	$\psi$ are \emph{topologically conjugate}.
	
	By abuse of terminology, if $\Gamma_1,\Gamma_2\le \homeo_+(\T)$ are isomorphic subgroups and
	$\rho:\Gamma_1\to\Gamma_2$ is an isomorphism, we will also say that the action of $\Gamma_2$
	is semi-conjugate to the action of $\Gamma_1$ if there exists a continuous monotone map
	$h:\T\to\T$ of degree one such that
	\[
	h\circ \rho(\gamma)=\gamma\circ h
	\qquad\text{for every }\gamma\in \Gamma_1.
	\]
	
	\begin{remark}
		There is a broader notion of semi-conjugacy for circle actions, based on lifts to cyclic central
		extensions, which defines an equivalence relation; see for instance \cite{Kim-Koberda-Mj}.
		However, all semi-conjugacies considered in this article are of the classical form above, namely
		via continuous monotone degree-one maps on the circle.
	\end{remark}
	
	We will also use the standard blow-up terminology associated to semi-conjugacies.
	
	If \(\Gamma_1,\Gamma_2\leq \homeo_+(\T)\) are isomorphic subgroups and the action of
	\(\Gamma_1\) is semi-conjugate to the action of \(\Gamma_2\) by a semi-conjugacy
	\(h:\T\to\T\), then the \(\Gamma_2\)-invariant set \(A:=h(\gap(h))\) records the points of
	\(\Gamma_2\) that have been opened into intervals. In this situation, we say that
	\(\Gamma_1\) is a blow-up of \(\Gamma_2\) on \(A\); see, for instance,
	\cite[Section~2.1]{Kim-Koberda-Denjoy}. More generally, one may prescribe the action of the stabilizer
	of each blown-up orbit on the corresponding interval; this gives a more flexible blow-up
	construction, which exists and is unique up to conjugacy; see \cite[Definition~2.14 and
	Theorem~2.15]{carnevale2022groups}. 
	
	Finally, we recall the terminology related to M\"obius-like actions.
	
	Recall that the standard action of $\psl(2,\RR)$ on the circle identifies $\T$ with the projective
	line $\mathbb{RP}^1$. Every element of $\psl(2,\RR)$ is of exactly one of the following types:
	elliptic, parabolic, or hyperbolic. In particular, an orientation-preserving M\"obius transformation
	has either no fixed point, exactly one fixed point, or exactly two fixed points, and in the latter case, one is attracting and the other is repelling.
	
	A homeomorphism $f\in\homeo_+(\T)$ is \emph{M\"obius-like} if it is topologically conjugate to
	an element of $\psl(2,\RR)$. A subgroup $G\le \homeo_+(\T)$ is \emph{M\"obius-like} if every
	element of $G$ is M\"obius-like.
	
	In particular, a homeomorphism with two fixed points is M\"obius-like if and only if both fixed points are hyperbolic. A homeomorphism with two parabolic fixed points will be called
	\emph{bi-parabolic}. We also stress that a M\"obius-like subgroup need not be conjugate to a
	subgroup of $\psl(2,\RR)$, since the conjugacy may depend on the element; see for instance the
	examples in \cite{Ko2}.
	
	\section{Proper ping-pong partitions}\label{s.ping-pong}
	
	We begin with the following variant of the ping-pong setting.

	\begin{definition}\label{d.ping-pong}
		Let \(X\) be a topological space and let \(F,G\leq \homeo(X)\) be two nontrivial subgroups.
		Set \(S:=F\cap G\), and assume that \(S\) is proper in both \(F\) and \(G\).
		A \emph{proper ping-pong partition} for \(F\) and \(G\) is a pair of disjoint nonempty open
		sets \(U,V\subset X\), each with finitely many connected components, such that:
		\begin{enumerate}[\rm i.]
			\item \((F\smallsetminus S)(U)\subset V\) and \((G\smallsetminus S)(V)\subset U\);
			\item \(S(U)=U\) and \(S(V)=V\).
		\end{enumerate}
	\end{definition}
	
	By the ping-pong lemma for amalgamated free products, if $(U,V)$ is a proper ping-pong partition for $F$ and $G$, then \[\langle F,G\rangle \cong F*_S G.\]
	
	This is a special case of the classical ping-pong argument for amalgamated free products,
	which holds more generally for bijections of arbitrary sets; see
	\cite{Fenchel-Nielsen} and \cite[Section~VII.A]{Maskit}.
	
	The next lemma records a simple dynamical consequence that will be used repeatedly later.
	
	\begin{lemma}\label{l.minimal_ping-pong}
		Assume that \(X\) is compact. With notation as in Definition~\ref{d.ping-pong}, if
		\((U,V)\) is a proper ping-pong partition for \(F\) and \(G\), then
		\(\overline{U\cup V}\) contains a nonempty closed \(\langle F,G\rangle\)-invariant subset.
		In particular, if the action of \(\langle F,G\rangle\) on \(X\) is minimal, then \(\overline{U\cup V}=X\).
	\end{lemma}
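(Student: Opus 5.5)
Set \(\Gamma:=\langle F,G\rangle\). My plan is to find a nonempty compact subset of \(\overline{U\cup V}\) that every element of \(F\cup G\) maps into itself. Then the intersection of its orbit under \(\Gamma\) is a nonempty closed \(\Gamma\)-invariant set. Concretely, I will use \(K:=\bigcap_{\gamma\in\Gamma}\gamma\bigl(\overline{U\cup V}\bigr)\). This set is closed and \(\Gamma\)-invariant by construction, and it sits inside \(\overline{U\cup V}\) because we may take \(\gamma=\mathrm{id}\). So the only work is to show that \(K\neq\emptyset\). For that I will use compactness of \(X\) and prove that the family \(\{\gamma(\overline{U\cup V})\}_{\gamma\in\Gamma}\) has the finite intersection property.

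To get the finite intersection property, I will use the normal form in \(F*_S G\). Any element of \(\Gamma\) is either in \(S\), or in \(F\smallsetminus S\), or in \(G\smallsetminus S\), or is a reduced word \(\gamma=a_1a_2\cdots a_m\) whose letters lie alternately in \(F\smallsetminus S\) and \(G\smallsetminus S\). The ping-pong rules (i) and (ii) give \(\gamma(U)\subset U\cup V\) for every such \(\gamma\), and likewise \(\gamma(V)\subset U\cup V\). Indeed, elements of \(S\) preserve both \(U\) and \(V\); a letter of \(F\smallsetminus S\) sends \(U\) into \(V\); a letter of \(G\smallsetminus S\) sends \(V\) into \(U\). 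The letters alternate, so the ping-pong continues until the whole word is applied.

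There is one obstacle in that step. A letter of \(F\smallsetminus S\) applied to \(V\) is not controlled by the axioms. To handle this, I will choose the starting set according to the last letter of the word. If the word ends with a letter of \(F\smallsetminus S\), I start from \(U\); if it ends with a letter of \(G\smallsetminus S\), I start from \(V\). In either case \(\gamma(U\cup V)\supset\gamma(W)\) with \(W\in\{U,V\}\), and \(\gamma(W)\) lands inside \(U\) or \(V\). The finitely many connected components of \(U\) and \(V\) should not be needed for this lemma.

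The step I expect to be the main obstacle is passing from these one-word statements to finitely many \(\gamma_1,\dots,\gamma_r\) at once. Knowing that each \(\gamma_i(U\cup V)\) meets \(U\cup V\) does not make the sets \(\gamma_i(\overline{U\cup V})\) meet one another. My first attempt would avoid the issue with an alternative construction. Take an infinite alternating sequence of letters \(a_1,a_2,\dots\). Nested ping-pong gives a decreasing sequence of sets
\[
a_1\cdots a_n(\overline W)\subset \overline{U\cup V}.
\]
Let \(L\) be the union of limit points of orbits of points of \(\overline{U\cup V}\), more precisely the closure of the forward images of all such points. Since \(X\) is compact, every \(\Gamma\)-orbit closure contains a minimal set by Zorn's lemma. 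Therefore I would instead take a point \(x\in U\) and use the set
\[
\bigcap_{n}\overline{\{\gamma(x):\gamma \text{ reduced of length}\ge n\}}.
\]
This set is nonempty by compactness, and it lies inside \(\overline{U\cup V}\) because long reduced words send \(x\), after suitably adjusting its last letter, into \(U\cup V\). Multiplying by a letter changes word length by at most one, so the set is invariant under \(F\) and \(G\), hence under \(\Gamma\). The minimality statement then follows immediately: a minimal action has \(X\) as its only nonempty closed invariant set, so \(X\subset\overline{U\cup V}\).
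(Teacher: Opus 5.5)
Your final construction has a genuine gap at the containment $L\subset\overline{U\cup V}$, where $L=\bigcap_n\overline{\{\gamma(x):\gamma\ \text{reduced},\ |\gamma|\ge n\}}$ and $x\in U$. Nonemptiness and invariance of $L$ are fine. The problem is that the axioms control $F\smallsetminus S$ only on $U$, and $G\smallsetminus S$ only on $V$. So when the rightmost letter $a_m$ of $\gamma$ lies in $G\smallsetminus S$, the point $a_m(x)$ is arbitrary, and nothing forces the remaining letters to bring it back into $U\cup V$. ``Adjusting the last letter'' does not change the point $\gamma(x)$ that you put into the set.

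This can genuinely fail. Take a Schottky pair $f,g$ with disjoint closed intervals in cyclic order $D_f^-,D_f^+,D_g^-,D_g^+$, where $f^{\pm1}(\T\smallsetminus D_f^{\mp})\subset D_f^{\pm}$ and $g^{\pm1}(\T\smallsetminus D_g^{\mp})\subset D_g^{\pm}$. Let $U$ (resp.\ $V$) be the union of the interiors of the convex hulls of the limit set inside $D_g^{\pm}$ (resp.\ $D_f^{\pm}$). This is a proper ping-pong partition for $\langle f\rangle$ and $\langle g\rangle$. The component $J$ of $\T\smallsetminus\overline{U\cup V}$ between the hulls in $D_g^+$ and $D_f^-$ has both endpoints fixed by $c=gf$, and $g(J)\subset U$. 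Now modify the action along the orbit of $J$ (a blow-up with prescribed stabilizer action) so that $c$ acts on $J$ with an interior attracting fixed point $q$. The action on the limit set, and hence the ping-pong property, is unchanged. For $x=g(z)$ with $z\in J$, the reduced words $(gf)^kg^{-1}$ have length $2k+1$ and satisfy $(gf)^kg^{-1}(x)=c^k(z)\to q$. Thus $q\in L\smallsetminus\overline{U\cup V}$.

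The repair is to keep only words compatible with the position of $x$. Let $E_n'$ be the set of points $\gamma(x)$ with $\gamma=a_1\cdots a_m$ alternating, $m\ge n$, and $a_m\in F\smallsetminus S$. Then $E_n'\subset U\cup V$ by ping-pong. For $h\in F\cup G$ and $n\ge 2$ one checks $h(E_n')\subset E_{n-1}'$: if $ha_1\in S$, absorb it into $a_2$, and the rightmost letter still lies in $F\smallsetminus S$. Hence $\bigcap_n\overline{E_n'}$ is a nonempty closed invariant subset of $\overline{U\cup V}$, and your minimality deduction then goes through.

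This is essentially the paper's argument, run on the sets $U,V$ rather than on a single orbit. There, $\Lambda_n$ is the closure of the union of all $w(U)$ and $w(V)$, with $w$ ranging over alternating words of length $n$ whose rightmost letter is in $F\smallsetminus S$ (for $U$) or in $G\smallsetminus S$ (for $V$). The $\Lambda_n$ are nested by ping-pong, and $f(\Lambda_{n+1})\cup g(\Lambda_{n+1})\subset\Lambda_n$ for all $f\in F$, $g\in G$ gives invariance of $\Lambda=\bigcap_n\Lambda_n$. Taking the union over all admissible words at each level, instead of following a single infinite word as in your intermediate idea, is what makes nestedness and invariance automatic. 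Since $\Lambda$ is invariant and contained in $\overline{U\cup V}$, it also lies in your set $K$, so $K\neq\emptyset$ as well.
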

	
	\begin{proof}
		Set \(F^*:=F\smallsetminus S\) and \(G^*:=G\smallsetminus S\). Using the ping-pong
		hypotheses, the family of closed sets
		\begin{align*}
			\Lambda_0 &= \overline{U}\cup \overline{V},\\
			\Lambda_1 &= \overline{G^*(V)} \cup \overline{F^*(U)},\\
			\Lambda_2 &= \overline{G^*F^*(U)} \cup \overline{F^*G^*(V)},\\
			\Lambda_3 &= \overline{G^*F^*G^*(V)} \cup \overline{F^*G^*F^*(U)}, \ldots
		\end{align*}
		is nested, hence
		\[
		\Lambda:=\bigcap_{n\in\NN}\Lambda_n
		\]
		is nonempty and closed. Moreover, for every \(f\in F\) and \(g\in G\), one has
		\[
		f(\Lambda_{n+1})\subset \Lambda_n
		\qquad\text{and}\qquad
		g(\Lambda_{n+1})\subset \Lambda_n
		\]
		for all \(n\ge0\). It follows that \(\Lambda\) is invariant under both \(F\) and \(G\), and therefore under \(\langle F,G\rangle\). The last assertion follows
		immediately.
	\end{proof}
	
	From this point on, we restrict to orientation-preserving actions on the circle, that is,
	\(X=\T\) and \(F,G\le \homeo_+(\T)\).
	
	\begin{lemma}\label{l.unique_minimal_ping-pong}
		Let \(F,G\leq \homeo_+(\T)\) admit a proper ping-pong partition \((U,V)\), and assume
		that either \([F:S]>2\) or \([G:S]>2\), where \(S=F\cap G\). Then the action of
		\(\langle F,G\rangle\) on \(\T\) admits a unique minimal set, and this set is infinite.
	\end{lemma}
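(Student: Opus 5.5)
The plan is to reduce to the classical trichotomy for group actions on the circle. For any subgroup \(\Gamma\le\homeo_+(\T)\), either \(\Gamma\) has a finite orbit, or \(\Gamma\) has a unique minimal set, which is then infinite (either all of \(\T\) or a Cantor set); see \cite[Section~5]{ghys-circle}. So for \(\Gamma:=\langle F,G\rangle\) it suffices to show that \(\Gamma\) has no finite orbit. By symmetry we assume \([F:S]>2\).

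\textbf{Step 1: a free ping-pong semigroup on \(U\).} Choose \(f_1,f_2\in F^*:=F\smallsetminus S\) with \(f_2^{-1}f_1\notin S\); this is possible since \(F\) has at least three cosets of \(S\). Fix any \(g\in G^*\).
\begin{itemize}
\item Since \(f_2^{-1}f_1\in F^*\), we get \(f_2^{-1}f_1(U)\subset V\). As \(V\) is disjoint from \(U\), this gives \(f_1(U)\cap f_2(U)=\emptyset\).
\item Consequently \(gf_1(U)\) and \(gf_2(U)\) are disjoint subsets of \(U\), because \(g(V)\subset U\) and \(g\) is injective.
\end{itemize}
Now suppose \(O\) is a finite \(\Gamma\)-orbit, and let \(\Gamma_0\) be the finite-index subgroup fixing \(O\) pointwise. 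Choose \(k\ge1\) with \(h_i:=(gf_i)^k\in\Gamma_0\) for \(i=1,2\). By induction, \(h_i(U)\subset gf_i(U)\). Hence \(h_1(U)\cap h_2(U)=\emptyset\) and \(h_i(U)\subset U\).

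\textbf{Step 2: \(O\) avoids \(U\cup V\).}
\begin{itemize}
\item If \(y\in O\cap U\), then \(y=h_1(y)\in h_1(U)\) and \(y=h_2(y)\in h_2(U)\). This contradicts \(h_1(U)\cap h_2(U)=\emptyset\).
\item If \(y\in O\cap V\), then \(g(y)\in O\cap U\), which is again impossible.
\end{itemize}
Thus \(O\subset \T\smallsetminus(U\cup V)\).

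\textbf{Step 3: contradiction inside a complementary arc of \(O\) (main obstacle).} Now \(h_1\) and \(h_2\) fix every point of \(O\), so each of them preserves every connected component \(J\) of \(\T\smallsetminus O\). Each component of \(U\) lies in some such \(J\), since it misses \(O\). The difficulty is that disjointness of \(h_1(U)\) and \(h_2(U)\) alone is compatible with a ping-pong on a line, so a further argument is needed. The plan is:
\begin{itemize}
\item Use that \(U\) has finitely many components, and that \(h_1,h_2\) are orientation-preserving maps of the interval \(J\) sending \(U\cap J\) into itself.
\item In one interval \(J\) containing a component of \(U\), compare the leftmost (or rightmost) components of \(U\cap J\). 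Pushing them by \(h_1\) and by \(h_2\), both images must land in the finitely many components of \(U\cap J\) while remaining disjoint and respecting the order on \(J\). This should force an order incompatibility.
\item If this bookkeeping does not close, the fallback is to replace \(h_1,h_2\) by suitable products such as \(h_1h_2\) and \(h_2h_1\). One then iterates to obtain \(2^n\) pairwise disjoint images of \(U\) at depth \(n\), all inside \(J\) and all lying between two fixed points of every element of \(\Gamma_0\). The aim is to derive a contradiction with the fact that \(\Gamma\) permutes the finitely many arcs of \(\T\smallsetminus O\) and that the stabiliser of each arc is of finite index.
\end{itemize}
I would first try the extremal-component ordering argument. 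Once no finite orbit exists, the trichotomy gives the unique infinite minimal set.
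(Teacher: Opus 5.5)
Your Steps 1 and 2 are correct. Producing \(h_1=(gf_1)^k\) and \(h_2=(gf_2)^k\) that fix \(O\) pointwise, satisfy \(h_i(U)\subset U\) and have \(h_1(U)\cap h_2(U)=\emptyset\) is a clean substitute for the paper's counting argument (\(M\ge N\ge 2M\)) excluding points of a finite orbit from \(U\cup V\). The gap is Step 3, which you leave unfinished, and neither route you sketch can close it. Nothing in the combinatorics of \(U\cap J\) is contradictory. Identify \(\T\smallsetminus\{p\}\) with \(\RR\) and take \(U\cap J=(0,1)\). The maps \(x\mapsto x/3\) and \(x\mapsto (x+2)/3\) extend to homeomorphisms of \(\T\) fixing \(p\), send \(U\cap J\) into itself with disjoint images, and respect every order constraint. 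Iterating them gives your \(2^n\) disjoint images, a free semigroup of contractions, again with no contradiction. As long as the finite orbit may sit away from \(\overline{U\cup V}\), no argument localized in an arc of \(\T\smallsetminus O\) detects a problem.

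The missing idea is to force the finite orbit to touch \(\overline{U\cup V}\), which is what Lemma~\ref{l.minimal_ping-pong} provides. Instead of an arbitrary finite orbit, take a minimal set \(K\subset\overline{U\cup V}\), as the paper does, and suppose \(K\) is finite; an infinite \(K\) then gives uniqueness by the trichotomy you cite. With \(O:=K\), Step 2 gives \(K\subset\partial U\cup\partial V\), and orientation-preservation finishes the proof through one-sided neighbourhoods.
\begin{itemize}
\item If \(y\in K\) is an endpoint of a component of \(U\), say \((q,y)\subset U\), then \(h_1\) and \(h_2\) fix \(y\) and preserve orientation. So both \(h_1((q,y))=(h_1(q),y)\) and \(h_2((q,y))=(h_2(q),y)\) contain some \((y-\varepsilon,y)\), contradicting \(h_1(U)\cap h_2(U)=\emptyset\). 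The case \((y,q)\subset U\) is symmetric.
\item If \(y\in K\) is an endpoint of a component of \(V\), then \(g\) maps that one-sided neighbourhood into \(U\). Since \(g(y)\in K\) and \(K\cap U=\emptyset\), the point \(g(y)\) is an endpoint of a component of \(U\), and the previous case applies.
\end{itemize}
This germ argument is also the precise content of the paper's brief remark that the counting ``applies to the closures'' by continuity and orientation-preservation.
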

	
	\begin{proof}
		By Lemma~\ref{l.minimal_ping-pong}, the set \(\overline{U\cup V}\) contains a minimal set, say \(K\). We claim that \(K\) is infinite.
		
		Assume by contradiction that \(K\) is finite. Without loss of generality, suppose that \([G:S]>2\). Set
		\[
		N:=\#(K\cap U),\qquad M:=\#(K\cap V).
		\]
		Choose \(f\in F\smallsetminus S\). Since \(f(U)\subset V\) and \(K\) is \(f\)-invariant, we have
		\[
		f(K\cap U)\subset K\cap V,
		\]
		and therefore \(M\geq N\).
		
		Next choose \(g_1, g_2\in G\smallsetminus S\) belonging to distinct left cosets of \(S\).
		For each \(x\in K\cap V\), the points \(g_1(x)\) and \(g_2(x)\) lie in \(K\cap U\), and
		they are distinct: otherwise \(g_1^{-1}g_2\in G\) would fix a point of \(V\), hence
		\(g_1^{-1}g_2\in S\), a contradiction. Therefore \(N\geq 2M\).
		
		It follows that \(N=M=0\), and therefore \(K\cap (U\cup V)=\emptyset\). By continuity, and
		since the action is orientation-preserving, the same counting argument applies to the closures
		of the connected components of \(U\) and \(V\). Thus
		\[
		K\cap (\overline U\cup \overline V)=\emptyset,
		\]
		contradicting the inclusion \(K\subset \overline{U\cup V}\). Hence \(K\) is infinite.
		
		Finally, an orientation-preserving action on the circle with an infinite minimal set admits a
		unique minimal set; see \cite[Proposition~5.6]{ghys-circle}.
	\end{proof}

	
	\section{Amalgamated products of circle actions}\label{s.amalgamated_product}
	
	In this section we define the amalgamated product of two circle actions and prove that,
	under a natural set of hypotheses, it exists and is unique up to conjugacy.

	\subsection{Setup and definition}\label{d.action_product}
	
	Let \(F,G\leq \homeo_+(\T)\) be countable subgroups, and let
	\(\bar x=(x_1,\ldots,x_n)\) and \(\bar y=(y_1,\ldots,y_n)\) be cyclically ordered
	\(n\)-tuples of points in \(\T\). Assume that, for all distinct \(i,j\in\{1,\ldots,n\}\),
	one has \(x_i\notin F.x_j\) and \(y_i\notin G.y_j\), and that
	\(\stab_F(x_i)=S_F\) and \(\stab_G(y_i)=S_G\), where
	\(\theta:S_F\xrightarrow{\sim}S_G\) is a fixed isomorphism. Let
	\(\sigma:\{1,\ldots,n\}\to\{1,\ldots,n\}\) be a cyclic permutation.
	
	We say that a subgroup \(H\leq \homeo_+(\T)\) is an \emph{amalgamated product of \(F\) and \(G\)
		on \(\bar x\) and \(\bar y\) by the isomorphism \(\theta\) and the permutation \(\sigma\)} if
	\(H\) contains subgroups \(\Gamma_F,\Gamma_G\leq H\) satisfying:
	\begin{enumerate}[\rm i.]
		\item\label{i.action_product}
		there exists an isomorphism \(\Psi:F*_{\theta}G\xrightarrow{\sim}H\) such that
		\(\Psi(F)=\Gamma_F\) and \(\Psi(G)=\Gamma_G\);
		
		\item\label{ii.action_product}
		the action of \(\Gamma_F\) on \(\T\) is semi-conjugate to the action of \(F\) on \(\T\)
		by a semi-conjugacy \(h_F\);
		
		\item\label{iii.action_product}
		the action of \(\Gamma_G\) on \(\T\) is semi-conjugate to the action of \(G\) on \(\T\)
		by a semi-conjugacy \(h_G\);
		
		\item\label{iv.action_product}
		\(\core(h_F)\subset \bigcup_{i=1}^n h_G^{-1}(y_i)\) and
		\(\core(h_F)\cap h_G^{-1}(y_i)\neq\emptyset\) for every \(i\in\{1,\ldots,n\}\);
		
		\item\label{v.action_product}
		\(\core(h_G)\subset \bigcup_{i=1}^n h_F^{-1}(x_i)\) and
		\(\core(h_G)\cap h_F^{-1}(x_i)\neq\emptyset\) for every \(i\in\{1,\ldots,n\}\);
		
		\item\label{vi.action_product}
		\((h_F^{-1}(x_1),h_G^{-1}(y_{\sigma(1)}),\ldots,h_F^{-1}(x_n),h_G^{-1}(y_{\sigma(n)}))\)
		is an ordered partition of \(\T\); moreover,
		\[
		\left(\bigcup_{i=1}^n h_F^{-1}(x_i)^\circ,\ \bigcup_{i=1}^n h_G^{-1}(y_{\sigma(i)})^\circ\right)
		\]
		is a proper ping-pong partition for \(\Gamma_F\) and \(\Gamma_G\).
	\end{enumerate}
	
	When such a subgroup \(H\) exists, we write
	\[
	H=(F,\bar x)\star_{\theta,\sigma}(G,\bar y).
	\]
	If \(\bar x=(x)\) and \(\bar y=(y)\) are singletons, we omit the trivial permutation and write
	\[
	H=(F, x)\star_{\theta}(G, y).
	\]
	And, if the stabilizers \(S_F\) and \(S_G\) are trivial, we omit the trivial isomorphism and write
	\[
	H=(F, x)\star_{\sigma}(G, y).
	\]
	
	\begin{remark*}
		Since \(\core(h_F)\) and \(\core(h_G)\) have no isolated points, items \ref{iii.action_product} and
		\ref{iv.action_product} imply that \(h_F^{-1}(x_i)\) and \(h_G^{-1}(y_i)\) are non-degenerate
		intervals for every \(i\in\{1,\ldots,n\}\). Moreover, by definition of the core,
		\[\partial \left(h_F^{-1}(x_i)\right)\subset \core(h_F)\subset \bigcup_{j=1}^n h_G^{-1}(y_j),\quad \text{ and similarly }\quad
		\partial \left(h_G^{-1}(y_i)\right)\subset \core(h_G)\subset \bigcup_{j=1}^n h_F^{-1}(x_j).\]
		Therefore, the requirement in item \ref{vi.action_product} is rather mild: if the sets
		\((h_F^{-1}(x_1),h_G^{-1}(y_{\sigma(1)}),\ldots,h_F^{-1}(x_n),h_G^{-1}(y_{\sigma(n)}))\)
		have pairwise disjoint interiors and are cyclically ordered, then their boundary points match and
		they form an ordered partition of \(\T\).
	\end{remark*}

	\subsection{Construction of the amalgamated product of circle actions}
	
	We now describe the construction of the amalgamated product of two circle actions.
	The following theorem states that, under the additional index assumption, this construction
	produces a subgroup of \(\homeo_+(\T)\) acting minimally on the circle and realizing the amalgamated product introduced above, and that the resulting action is unique up to conjugacy.
	
	\begin{theorem}\label{t.action_product}
		Let \(F,G\leq \homeo_+(\T)\) be countable subgroups, and let
		\(\bar x=(x_1,\ldots,x_n)\) and \(\bar y=(y_1,\ldots,y_n)\) be cyclically ordered
		\(n\)-tuples of points in \(\T\). Assume that, for all distinct \(i,j\in\{1,\ldots,n\}\),
		\begin{itemize}
			\item \(x_i\notin F.x_j\) and \(y_i\notin G.y_j\);
			\item \(\stab_F(x_i)=S_F\) and \(\stab_G(y_i)=S_G\), with
			\(\theta:S_F\xrightarrow{\sim}S_G\);
			\item \(S_F\lneqq F\), \(S_G\lneqq G\), and at least one of the indices
			\([F:S_F]\) and \([G:S_G]\) is greater than \(2\).
		\end{itemize}
		Let \(\sigma:\{1,\ldots,n\}\to\{1,\ldots,n\}\) be a cyclic permutation. Then there exists a subgroup \(H\) of \(\homeo_+(\T)\) acting minimally on \(\T\) such that
		\[
		H=(F,\bar x)\star_{\theta,\sigma}(G,\bar y).
		\]
		Moreover, \(H\) is unique up to conjugacy in \(\homeo_+(\T)\).
	\end{theorem}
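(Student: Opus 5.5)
Existence will come from a combinatorial model of the circle built from the orbit structures, and uniqueness from a back-and-forth argument that any two realizations induce the same cyclic order on a dense set of ``marker'' points. For existence, I would first reduce to the case where \(F\) and \(G\) act minimally, or at least where \(x_i,y_i\) lie in the unique minimal sets. The real work is combinatorial. Let \(\Gamma=F*_\theta G\) and consider the Bass--Serre tree \(T\) of the splitting, with vertices the cosets \(\gamma F\), \(\gamma G\) and edges \(\gamma S\), where \(S\) is \(S_F\) identified with \(S_G\) via \(\theta\). To each \(F\)-type vertex \(\gamma F\) we attach a copy of the circle carrying the \(F\)-action, with the points of the orbits \(F.x_i\) marked. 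Each marked point \(f.x_i\) corresponds to an edge, namely \(\gamma fS\) ``labelled \(i\)''; the fact that the \(x_i\) lie in distinct orbits and share the stabilizer \(S_F\) is what makes the marked points of the \(i\)-th orbit correspond bijectively to \(F/S_F\). We do the same for \(G\)-vertices with the \(y_j\). The gluing rule is the one dictated by item \ref{vi.action_product}: the edge \(\gamma S\) with label \(i\) on the \(F\)-side is glued to the \(G\)-vertex \(\gamma G\) at the marked point \(y_{\sigma(i)}\)-type. The cyclic permutation \(\sigma\) prescribes how the \(n\) marked orbits interleave, so that each circle is cut at its marked point and the neighbouring circle is inserted there.

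From this data I would define a \(\Gamma\)-invariant cyclic order on the countable set \(\Omega\) of all ``non-marked orbit points plus cut points'' of all these circles, in the spirit of how a tree of circles yields a dendrite-like circular order. I would then verify that it is dense and without gaps, using the index hypothesis: if \([G:S_G]>2\), every cut opens infinitely nested branches. Taking the Dedekind completion then gives a circle on which \(\Gamma\) acts by homeomorphisms. This is essentially the standard fact that a countable, densely cyclically ordered \(\Gamma\)-set yields an action on \(\T\); see \cite{ghys-circle}. Collapsing everything outside a given \(F\)-vertex circle produces \(h_F\), and similarly \(h_G\), so items \ref{ii.action_product}--\ref{v.action_product} hold by construction. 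The sets \(h_F^{-1}(x_i)\) are exactly the subtrees hanging off the edge labelled \(i\) at the base \(F\)-vertex. The ping-pong property in \ref{vi.action_product} is the statement that \(F\smallsetminus S\) moves the \(G\)-side branch to an \(F\)-side branch. The isomorphism \(\Psi\) in \ref{i.action_product} then follows from the ping-pong lemma for amalgams recalled after Definition~\ref{d.ping-pong}, or directly from faithfulness on \(T\).

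For minimality, Lemma~\ref{l.unique_minimal_ping-pong} gives a unique infinite minimal set \(K\). If \(K\neq\T\), we collapse the gaps of \(K\) and check that the collapsed action still satisfies \ref{i.action_product}--\ref{vi.action_product}, replacing \(h_F\) by its composition. Alternatively, and preferably, I would choose the completion so that no wandering intervals appear, by showing that the orbit of any point of \(\Omega\) is dense in the order. Lemma~\ref{l.minimal_ping-pong} ensures that the interiors cover densely.

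For uniqueness, given two realizations \(H,H'\), I would show that the pair \((h_F,h_G)\) determines, for each point \(p\) in the dense set of endpoints of the intervals \(\gamma h_F^{-1}(x_i)\), a unique address in \(T\) together with a side. The ping-pong nesting forces the cyclic order of these endpoints to agree with the combinatorial order above. Minimality implies that these endpoints are dense, and also that the distinct addresses give distinct points, with no collapsed intervals. So the map matching endpoints extends to an order-preserving bijection of dense sets and hence to a conjugacy. I expect the main obstacle to be this uniqueness step, specifically showing that nested intervals \(\gamma_k h^{-1}(\cdot)\) along an infinite ray of \(T\) shrink to points. I would first try to derive this from minimality: a nondegenerate limit interval would be a wandering or proper invariant configuration, contradicting minimality.
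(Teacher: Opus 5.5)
Your route differs from the paper's. The paper builds everything inside one fixed circle: it prescribes \(h_F^{-1}(x_i)=[a_i,b_i]\) and \(h_G^{-1}(y_{\sigma(i)})=[b_i,a_{i+1}]\), defines the actions inductively on the nested sets \(K_F^m,K_G^m\), extends them to \(\T\), and then collapses the complement of the minimal set. It proves uniqueness by propagating \(h_{F_2}^{-1}h_{F_1}\) and \(h_{G_2}^{-1}h_{G_1}\) along the same nested sets. Your Dedekind completion of a tree of circles is a cleaner construction. Your back-and-forth on the dense orbit of endpoints is a sound uniqueness argument, and in it the shrinking of nested intervals you worry about comes for free, since an order-preserving bijection between dense subsets of \(\T\) extends to a homeomorphism.

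The genuine gap is your opening reduction to minimal \(F\) and \(G\). No such reduction exists: items~\ref{ii.action_product}--\ref{iii.action_product} ask for semi-conjugacies onto the \emph{given} actions, and without a density assumption a minimal \(H\) does not exist at all. Take \(n=1\), \(F=\langle f\rangle\) with \(f\) a hyperbolic M\"obius map, \(x\notin\fix(f)\), and \(G\) generated by a rotation of order \(3\); all hypotheses hold. Let \(A\) be the open arc from \(x\) to \(f(x)\) in the component of \(\T\smallsetminus\fix(f)\) containing \(x\). Then \(A\cap F.x=\emptyset\) and the arcs \(f^m(A)\) are pairwise disjoint. For any \(H\) as in Definition~\ref{d.action_product}, \(L:=h_F^{-1}(A)\) is a nonempty open arc in \(V=\T\smallsetminus h_F^{-1}(x)\). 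Now check where reduced words send \(L\):
a nontrivial element of \(\Psi(F)\) maps \(L\) onto some \(h_F^{-1}(f^m(A))\) with \(m\neq0\);
a word whose leftmost letter lies in \(\Psi(G)\smallsetminus\{\idid\}\) maps \(L\) into \(U\);
every other word maps \(L\) into a fibre \(h_F^{-1}(f'(x))\) with \(f'\neq\idid\).
Each image misses \(L\), so \(L\) is wandering and \(H\) is not minimal. Hence neither of your routes to minimality works in general. Orbits in \(\Omega\) are not dense. After collapsing the gaps of the minimal set, nothing can replace \(h_F\), which is injective on \(L\) and does not factor through the collapse. The paper's closing assertion that items~\ref{i.action_product}--\ref{vi.action_product} survive the collapse fails for the same reason, so the statement itself needs an extra hypothesis. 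Applying the argument to any arc \(A\) missing \(\bigcup_iF.x_i\), the translates of \(h_F^{-1}(A)\) form a proper open invariant set avoiding \(\partial U\). So minimality of \(H\) forces \(\bigcup_iF.x_i\) and \(\bigcup_iG.y_i\) to be dense. Conversely, if for instance \(F\) and \(G\) are minimal, your preferred route does give minimality: ping-pong moves every point into some \(h_F^{-1}(x_i)^\circ\), and every open arc contains a translate of each \(h_F^{-1}(x_i)\).

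Separately, your gluing rule is wrong for \(n\ge2\). The points \(f(x_1),\ldots,f(x_n)\) all correspond to the single edge \(fS\), hence to the single vertex \(fG\), so you cannot insert a whole copy of the \(G\)-circle at each of them. Item~\ref{vi.action_product}, together with monotonicity of \(h_G\), forces \(h_G\) to map \(h_F^{-1}(x_i)\) onto the arc from \(y_{\sigma(i-1)}\) to \(y_{\sigma(i)}\). So both circles are cut at their \(n\) marked points and the resulting \(2n\) arcs interleave: that \(G\)-arc sits at \(x_i\), and the \(F\)-arc from \(x_i\) to \(x_{i+1}\) sits at \(y_{\sigma(i)}\). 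This is also why \(\sigma\) must be a rotation. Finally, density of the cyclic order on \(\Omega\) does not use the index hypothesis; that hypothesis serves to exclude a finite minimal set, via Lemma~\ref{l.unique_minimal_ping-pong}.
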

	
	\begin{proof}~
		
		We begin by blowing up the actions of \(F\) and \(G\) along the orbits of
		\(x_1,\ldots,x_n\) and \(y_1,\ldots,y_n\), respectively, in such a way that the intervals
		inserted for the action of \(F\) are complementary to those inserted for the action of \(G\).
		This produces two semi-conjugacies \(h_F,h_G:\T\to\T\), together with the basic pieces on
		which the new actions will first be defined. The actions of \(F\) and \(G\) will then be
		constructed inductively on these pieces and extended to the whole circle.
		
		Choose a partition of the circle \((a_1,b_1,\ldots,a_n,b_n)\) with
		\(a_1<b_1<a_2<b_2<\cdots<a_n<b_n<a_1\).
		We require that \([a_i,b_i]\) be the preimage of \(x_i\) under the blow-up of \(F\), while
		\([b_i,a_{i+1}]\) is the preimage of \(y_{\sigma(i)}\) under the blow-up of \(G\).
		To this end, choose continuous weakly monotone maps \(h_F,h_G:\T\to\T\) such that:
		
		\begin{itemize}
			\item $h_F$ and $h_G$ are weakly monotone increasing and continuous functions,
			\item $h_F^{-1}(x_i)=[a_i,b_i]$ and $I_{\xi}:=h_F^{-1}(\xi)$ is a nontrivial closed interval for any $\xi\in \bigcup_{i=1}^nF.x_i$,
			\item $h_G^{-1}(y_{\sigma(i)})=[b_i,a_{i+1}]$ and $J_{\eta}:=h_G^{-1}(\eta)$ is a nontrivial closed interval for any $\eta\in \bigcup_{i=1}^nG.y_i$,
			\item $h_F^{-1}(z)$ is a point for any $z\notin \bigcup_{i=1}^n F.x_i$,
			\item $h_G^{-1}(z)$ is a point for any $z\notin \bigcup_{i=1}^n G.y_i$.
		\end{itemize}

		\bigskip
		
		\noindent\textbf{Auxiliary sets and first partial maps.}
		
		Set		\[
		\cI=\bigcup_{i=1}^n\bigcup_{\xi\in F.x_i} I_\xi,
		\qquad
		\cJ=\bigcup_{i=1}^n\bigcup_{\eta\in G.y_i} J_\eta.
		\]
		For each \(i\in\{1,\ldots,n\}\) and each \(\xi\in F.x_i\), let
		\(A_\xi:I_{x_i}\to I_\xi\) be the unique orientation-preserving affine homeomorphism.
		Similarly, for each \(\eta\in G.y_i\), let \(B_\eta:J_{y_i}\to J_\eta\) be the unique
		orientation-preserving affine homeomorphism. Define inductively
		\begin{align*}
			K_F^m&=
			\begin{cases}
				\overline{\T\smallsetminus\cI}, & \text{if }m=0,\\[0.4em]
				\displaystyle \bigcup_{i=1}^n\bigcup_{\xi\in F.x_i}
				A_\xi(K_G^{m-1}\cap I_{x_i}), & \text{if }m\geq 1,
			\end{cases}\\[0.8em]
			K_G^m&=
			\begin{cases}
				\overline{\T\smallsetminus\cJ}, & \text{if }m=0,\\[0.4em]
				\displaystyle \bigcup_{i=1}^n\bigcup_{\eta\in G.y_i}
				B_\eta(K_F^{m-1}\cap J_{y_i}), & \text{if }m\geq 1.
			\end{cases}
		\end{align*}
		Set \(K^m:=K_F^m\cup K_G^m\) for each \(m\in\NN\), and
		\[
		K:=\bigcup_{m\in\NN} K^m.
		\]
		By construction,
		\[
		\bigcup_{m\in\NN}K_F^m
		=
		K\cap \bigcup_{i=1}^n J_{y_i},
		\qquad
		\bigcup_{m\in\NN}K_G^m
		=
		K\cap \bigcup_{i=1}^n I_{x_i}.
		\]
		For each \(i\in\{1,\ldots,n\}\), each \(\xi\in F.x_i\), and each \(\eta\in G.y_i\), choose
		elements \(f_\xi\in F\) and \(g_\eta\in G\) such that
		\[
		f_\xi(x_i)=\xi,
		\qquad
		g_\eta(y_i)=\eta,
		\]
		and define
		\[
		\Psi(f_\xi)\big|_{K\cap I_{x_i}}:=A_\xi,
		\qquad
		\Psi(g_\eta)\big|_{K\cap J_{y_i}}:=B_\eta.
		\]
		We will first define an action of \(F*_{S}G\) on the subset \(K\), and later extend it to
		the whole circle \(\T\).
		
		\bigskip
		
		\noindent\textbf{Definition of the action of the stabilizers.}
		
		By the universal property of the amalgamated free product, it is enough to define
		compatible actions of \(F\) and \(G\) on \(K\), that is, actions whose restrictions to
		\(S_F\) and \(S_G\) agree under the chosen isomorphism \(\theta:S_F\to S_G\). We begin with
		the stabilizers.
		
		The tautological actions of \(F\) and \(G\) induce actions
		\[
		\psi_F:F\to \homeo_+(\T\smallsetminus\cI),
		\qquad
		\psi_G:G\to \homeo_+(\T\smallsetminus\cJ),
		\]
		defined by
		\[
		\psi_F(f)(z)=h_F^{-1}fh_F(z)
		\quad\text{for }f\in F,\ z\in \T\smallsetminus\cI,
		\]
		and
		\[
		\psi_G(g)(z)=h_G^{-1}gh_G(z)
		\quad\text{for }g\in G,\ z\in \T\smallsetminus\cJ.
		\]
		By continuity, these actions extend to \(K_F^0\) and \(K_G^0\), and we still denote the
		extensions by \(\psi_F\) and \(\psi_G\), respectively.
		
		Now let \(s\in S_F\). Since \(S_F\) and \(S_G\) are isomorphic, fix an isomorphism
		\[
		\theta:S_F\xrightarrow{\sim} S_G.
		\]
		For \(s\in S_F\), define
		\[
		\Psi(s)\big|_{K_F^0}:=\psi_F(s),
		\qquad
		\Psi(s)\big|_{K_G^0}:=\psi_G(\theta(s)).
		\]
		Take \(s\in S_F\), and let \(\xi_1,\xi_2\in F.x_i\) and \(\eta_1,\eta_2\in G.y_i\) satisfy
		\(s(\xi_1)=\xi_2\) and \(\theta(s)(\eta_1)=\eta_2\). We then define, for each
		\(i\in\{1,\ldots,n\}\) and each \(m\in\NN\),
		\begin{align*}
			\Psi(s)\big|_{A_{\xi_1}(K_G^m\cap I_{x_i})}
			&:=
			A_{\xi_2}\,\Psi(f_{\xi_2}^{-1}sf_{\xi_1})\big|_{K_G^m}\,A_{\xi_1}^{-1},\\
			\Psi(s)\big|_{B_{\eta_1}(K_F^m\cap J_{y_i})}
			&:=
			B_{\eta_2}\,\Psi\!\left(\theta^{-1}(g_{\eta_2}^{-1}\theta(s)g_{\eta_1})\right)\big|_{K_F^m}\,B_{\eta_1}^{-1}.
		\end{align*}
		Thus, for every \(s\in S_F\), the map \(\Psi(s)\) is a well-defined homeomorphism of \(K\)
		preserving the orientation induced from \(\T\). For \(s\in S_G\), we set
		\[
		\Psi(s):=\Psi(\theta^{-1}(s)).
		\]
		In particular, the two stabilizer actions coincide, and hence
		\[
		\Psi(S_F)=\Psi(S_G).
		\]
		
		\bigskip
		
		\noindent\textbf{Definition of the actions of \(F\) and \(G\).} 
		
		We now define the actions of \(F\) and \(G\) on \(K\). Recall that, for each \(i\in\{1,\ldots,n\}\), \(\xi\in F.x_i\), and \(\eta\in G.y_i\), we have already fixed elements \(f_\xi\in F\) and \(g_\eta\in G\) such that \(f_\xi(x_i)=\xi\) and \(g_\eta(y_i)=\eta\), and defined
		 \[ \Psi(f_\xi)\big|_{K\cap I_{x_i}}:=A_\xi, \qquad \Psi(g_\eta)\big|_{K\cap J_{y_i}}:=B_\eta. \]
		Moreover, for every \(f\in F\), the map \(\Psi(f)\big|_{K_F^0}\) is already defined by \(\psi_F(f)\), and similarly, for every \(g\in G\), the map \(\Psi(g)\big|_{K_G^0}\) is already defined by \(\psi_G(g)\). 
		
		Let \(f\in F\). If \(f(x_i)=\xi\), then \(f_\xi^{-1}f\in S_F\). Since \(\Psi(f_\xi^{-1}f)\) is already defined on \(K\), and \(\Psi(f_\xi)\) is already defined on \(K\cap I_{x_i}\), the only compatible  definition is 
		\[ \Psi(f)\big|_{K\cap I_{x_i}} := \Psi(f_\xi)\,\Psi(f_\xi^{-1}f). \] 
		This defines \(\Psi(f)\) on \(K\cap I_{x_i}\) for every \(i\in\{1,\ldots,n\}\). It remains to define \(\Psi(f)\) on the points of \(K\) outside 
		\[ \left(K\cap \bigcup_{i=1}^n I_{x_i}\right)\cup K_F^0. \] 
		By construction, 
		\[ K\smallsetminus \left(\left(K\cap \bigcup_{i=1}^n I_{x_i}\right)\cup K_F^0\right) = \left(K\cap \bigcup_{i=1}^n J_{y_i}\right)\smallsetminus K_F^0 = \bigcup_{m\geq 1} K_F^m. \] 
		Now let \(m\geq 1\) and \(z\in K_F^m\). Then there exists \(\xi'\in F.x_i\) such that 
		\[ A_{\xi'}^{-1}(z)\in K_G^{m-1}\subset K\cap \bigcup_{i=1}^n I_{x_i}. \] 
		Since \(\Psi(f\,f_{\xi'})\) is already defined at \(A_{\xi'}^{-1}(z)\), we set 
		\[ \Psi(f)(z) := \Psi(f\,f_{\xi'})(A_{\xi'}^{-1}(z)). \]
		Equivalently, 
		\[ \Psi(f)(z) =\Psi(f)\,\Psi(f_{\xi'})(A_{\xi'}^{-1}(z))= \Psi(f)\,A_{\xi'}\,A_{\xi'}^{-1}(z). \] 
		In this way, \(\Psi(f)\) is defined on all of \(K\), and \(\Psi:F\times K\to K\) is a
		continuous group action by orientation-preserving homeomorphisms, which extends to a
		continuous action on \(\overline K\). The construction for \(G\) is entirely analogous, and yields a continuous action 
		\[ \Psi:G\times \overline K\to \overline K. \]
		
		Since \(\overline K\subset \T\) is closed, the actions of \(F\) and \(G\) on \(\overline K\)
		extend to continuous orientation-preserving actions on \(\T\), which we still denote by
		\[
		\Psi:F\times \T\to\T,
		\qquad
		\Psi:G\times \T\to\T.
		\]
		
		\bigskip
		
		\smallskip
		
		\noindent\textbf{Verification of the defining properties.}
		
		Let
		\[
		H:=\langle \Psi(F),\Psi(G)\rangle\leq \homeo_+(\T),
		\qquad
		\Gamma_F:=\Psi(F),
		\qquad
		\Gamma_G:=\Psi(G).
		\]
		By construction, the action of \(\Gamma_F\) on \(\T\) is semi-conjugate to the action of
		\(F\) on \(\T\) by the semi-conjugacy \(h_F\), and similarly the action of \(\Gamma_G\) on
		\(\T\) is semi-conjugate to the action of \(G\) on \(\T\) by the semi-conjugacy \(h_G\).
		Moreover,
		\[
		\core(h_F)=\T\smallsetminus \cI^\circ \subset \bigcup_{i=1}^n [b_i,a_{i+1}]
		= \bigcup_{i=1}^n h_G^{-1}(y_i),
		\]
		and
		\[
		\core(h_G)=\T\smallsetminus \cJ^\circ \subset \bigcup_{i=1}^n [a_i,b_i]
		= \bigcup_{i=1}^n h_F^{-1}(x_i).
		\]
		Since every interval \(h_G^{-1}(y_i)\) and \(h_F^{-1}(x_i)\) has endpoints in
		\(\{a_1,b_1,\ldots,a_n,b_n\}\subset \core(h_F)\cap \core(h_G)\), it follows that, for every
		\(i\in\{1,\ldots,n\}\),
		\[
		\core(h_F)\cap h_G^{-1}(y_i)\neq\emptyset
		\quad\text{and}\quad
		\core(h_G)\cap h_F^{-1}(x_i)\neq\emptyset.
		\]
		Thus items \ref{ii.action_product}, \ref{iii.action_product}, \ref{iv.action_product}, and
		\ref{v.action_product} of Definition~\ref{d.action_product} are satisfied.
		
		It remains to verify items \ref{i.action_product} and \ref{vi.action_product}.
		By construction,
		\[
		h_F^{-1}(x_i)=[a_i,b_i],
		\qquad
		h_G^{-1}(y_{\sigma(i)})=[b_i,a_{i+1}],
		\]
		and therefore
		\[
		\bigl(h_F^{-1}(x_1),h_G^{-1}(y_{\sigma(1)}),\ldots,
		h_F^{-1}(x_n),h_G^{-1}(y_{\sigma(n)})\bigr)
		\]
		is an ordered partition of \(\T\).
		
		Recall that \(I_{x_i}\) and \(J_{y_{\sigma(i)}}\) denote the intervals
		\(h_F^{-1}(x_i)\) and \(h_G^{-1}(y_{\sigma(i)})\), respectively. Let \(f\in F\smallsetminus S_F\) and \(i\in\{1,\ldots,n\}\). Then there exists
		\(\xi\in F.x_i\), with \(\xi\notin\{x_1,\ldots,x_n\}\), such that
		\(f_\xi^{-1}f\in S_F\). Hence
		\[
		\Psi(f)(I_{x_i})
		=
		\Psi(f_\xi)\Psi(f_\xi^{-1}f)(I_{x_i})
		=
		\Psi(f_\xi)(I_{x_i})
		=
		I_\xi.
		\]
		Since \(I_\xi\cap I_{x_j}=\emptyset\) for every \(j\in\{1,\ldots,n\}\), it follows that
		\(I_\xi\) is contained in the interior of one of the intervals \(J_{y_{\sigma(j)}}\). Thus
		\[
		\Psi(f)\left(\bigcup_{i=1}^n I_{x_i}\right)
		\subset
		\bigcup_{i=1}^n J^\circ_{y_{\sigma(i)}}.
		\]
		Similarly, for every \(g\in G\smallsetminus S_G\),
		\[
		\Psi(g)\left(\bigcup_{i=1}^n J_{y_{\sigma(i)}}\right)
		\subset
		\bigcup_{i=1}^n I^\circ_{x_i}.
		\]
		Therefore
		\[
		\left(\bigcup_{i=1}^n I_{x_i},\ \bigcup_{i=1}^n J_{y_{\sigma(i)}}\right)
		\]
		is a proper ping-pong partition for the actions of \(\Gamma_F\) and \(\Gamma_G\).
		
		Finally, if we denote
		\[
		S:=\Psi(S_F)=\Psi(S_G),
		\]
		then the ping-pong lemma yields
		\[
		H=\langle \Psi(F),\Psi(G)\rangle=\Psi(F)*_S\Psi(G)\cong F*_\theta G.
		\]
		This proves items \ref{i.action_product} and \ref{vi.action_product}.
		
		Since \(\overline K\subset \T\) is a nonempty closed \(H\)-invariant subset, the action of
		\(H\) on \(\T\) admits a minimal set \(M\subset \overline K\). By
		Lemma~\ref{l.unique_minimal_ping-pong}, this minimal set is infinite. Hence \(M\) is either
		all of \(\T\) or a Cantor set; see \cite[Proposition~5.6]{ghys-circle}. In the latter case,
		collapsing each connected component of \(\T\smallsetminus M\) to a point yields a minimal
		action on \(\T\) which is semi-conjugate to the original one. Replacing \(H\) by this new
		action, we may therefore assume that the resulting subgroup \(H\) acts minimally on \(\T\).
		Moreover, the verifications of items \ref{i.action_product}--\ref{vi.action_product} are
		unchanged by this collapse.
		
		The uniqueness statement will be proved in Lemma~\ref{l.product_unicity}. This completes the
		proof of the theorem.
	\end{proof}
	
	\begin{remark}
		The above construction depends on the identification of the stabilizers
		\(S_F\) and \(S_G\). In particular, replacing the convention
		\(\Psi(s)=\Psi(\theta^{-1}(s))\) on \(S_G\) by
		\(\Psi(s)=\Psi(\theta^{-1}(s^{-1}))\) may lead to a different dynamical type
		for the resulting action. This choice will be relevant later in determining whether the resulting action is M\"obius-like.
	\end{remark}
	
	\subsection{Uniqueness of the amalgamated product of circle actions}
	We now show that the amalgamated product of circle actions is well defined: any two realizations with the same data are conjugate.
	
	\begin{lemma}\label{l.product_unicity}
		Let \(H_1,H_2\leq \homeo_+(\T)\) be two amalgamated products of \(F\) and \(G\) on
		\(\{x_1,\ldots,x_n\}\) and \(\{y_1,\ldots,y_n\}\), with respect to the same isomorphism
		\(\theta:S_F\to S_G\) and the same permutation \(\sigma\). Then the actions of \(H_1\) and
		\(H_2\) on \(\T\) are conjugate.
	\end{lemma}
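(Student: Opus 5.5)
Here is how I would go. Let \(\Psi_k:F*_\theta G\to H_k\) (\(k=1,2\)) be the isomorphisms given by item (i), with semi-conjugacies \(h_F^k,h_G^k\). The conjugacy will be built on a dense, combinatorially described subset and then extended by monotonicity. Presumably minimality of the \(H_k\) is part of the setting, as in Theorem~\ref{t.action_product}; without it the statement can fail, since one could blow up extra orbits. So I assume both actions are minimal, or first collapse to the unique minimal set given by Lemma~\ref{l.unique_minimal_ping-pong}.

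\textbf{Step 1: a coding by pieces.} For each \(k\), consider the family \(\mathcal P_k\) of closed intervals of the form \(\Psi_k(w)(P)\). Here \(w\in F*_\theta G\) and \(P\) ranges over the basic pieces \(I^k_i:=(h_F^k)^{-1}(x_i)\) and \(J^k_i:=(h_G^k)^{-1}(y_{\sigma(i)})\). By item (vi) these \(2n\) pieces form an ordered partition of \(\T\). By (ii)--(v), for \(f\in F\smallsetminus S_F\) the image \(\Psi_k(f)(I^k_i)\) is the interval \((h_F^k)^{-1}(f(x_i))\). It lies inside some \(J^k_j\), and its position there is determined by the cyclic order of \(f(x_i)\) relative to the points of \(\bigcup_l F.x_l\) and \(\core(h_F^k)\). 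The analogous statement holds for \(G\).

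I will prove by induction on the reduced word length of \(w\) that the cyclic order relations among the intervals of \(\mathcal P_k\) are independent of \(k\). Concretely, for reduced words \(w,w'\) and pieces \(P,P'\), whether \(\Psi_k(w)(P)\) contains, is disjoint from, or precedes \(\Psi_k(w')(P')\) is determined only by the data \((F,G,\bar x,\bar y,\theta,\sigma)\). The base case is exactly the ordered partition in (vi) together with the fact that \(h_F^k\) restricted to the gaps \(\bigcup J^k_j\) orders the \(F\)-images of the \(I\)'s as in the original action of \(F\) on \(\T\). For the inductive step, write \(w=fw''\) or \(w=gw''\) and apply the ping-pong inclusions: \(\Psi_k(w'')(P)\) sits inside a piece, and applying the first letter transports it into a piece of the other type. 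The position after transport is dictated by the semi-conjugacy to \(F\) (resp.\ \(G\)) and by the stabilizer action, which is the same for \(k=1,2\) through \(\theta\).

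\textbf{Step 2: nesting and shrinking.} Nested sequences \(P\supset \Psi(w_1)(P_1)\supset\cdots\) of such intervals have lengths tending to \(0\); this step needs minimality. Suppose instead that a nested intersection had nonempty interior \(L\). Then the images of \(L\) under \(H_k\) would be pairwise disjoint or nested in a controlled way by the ping-pong structure. This would produce a wandering interval, which contradicts minimality. In addition, the endpoints of all intervals in \(\mathcal P_k\) form an \(H_k\)-invariant set, so they are dense.

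\textbf{Step 3: conjugacy.} Define \(\phi\) on the set of endpoints \(E_1\) of intervals in \(\mathcal P_1\) by sending each endpoint to the corresponding endpoint in \(\mathcal P_2\). By Step~1, \(\phi\) is an order-preserving bijection \(E_1\to E_2\) between dense sets. It is equivariant, \(\phi\Psi_1(w)=\Psi_2(w)\phi\), by construction. It therefore extends uniquely to a homeomorphism of \(\T\) conjugating \(H_1\) to \(H_2\).

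The main obstacle is Step~1: proving that the full cyclic order of all translated pieces is forced by the data. The subtle part is the relative position of \(\Psi_k(f)(I_i)\) inside \(J_j\), and in particular the order of \(F\)-images lying in gaps of \(h_F^k\) coming from \(\core(h_G)\). I would handle it by noting that \(h_F^k\) is monotone and injective off \(\bigcup_\xi (h_F^k)^{-1}(\xi)\). This reduces the question to the cyclic order of the points \(f(x_i)\) in the original circle.
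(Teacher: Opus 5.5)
Your proposal is correct in outline, but it takes a different route from the paper.

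\textbf{The paper's route.} It builds the conjugacy first on $\core(h_{F_1})\cup\core(h_{G_1})$, using the formulas $h_{F_2}^{-1}h_{F_1}$ and $h_{G_2}^{-1}h_{G_1}$. It then checks separately that these agree at the $2n$ junction points and intertwine the stabilizers. Finally it pushes the map into the nested pieces by $\alpha=\Psi_2(f_\xi)\,\alpha\,\Psi_1(f_\xi)^{-1}$, using minimality to get density of the resulting domain.

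\textbf{Your route.} You define the map only on the orbit $E_1=H_1\cdot\{a_i,b_i\}$, by $\Psi_1(w)(c)\mapsto\Psi_2(w)(c)$. Equivariance is then tautological, and all the work goes into well-definedness and preservation of cyclic order. This gains you three things:
\begin{enumerate}
\item You never compare fibres of $h_F^1$ and $h_F^2$ away from the orbits $F.x_i$. The paper's formula $h_{F_2}^{-1}h_{F_1}$ tacitly needs the nondegenerate fibres of the two semi-conjugacies to lie over the same points.
\item The gluing at junction points and the stabilizer compatibility are absorbed into well-definedness.
\item You get a homeomorphism directly, rather than a monotone degree-one map whose injectivity must still be argued.
\end{enumerate}
The cost is that your Step~1 is exactly what the paper compresses into ``by induction, $\alpha$ extends to a well-defined continuous monotone map'', and you must actually carry it out. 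The ingredients you name are the right ones:
\begin{enumerate}
\item $\Psi_k(f)(I_i)=(h_F^k)^{-1}(f(x_i))$ is placed by monotonicity of $h_F^k$ alone.
\item Orientation is preserved inside a piece.
\item Sub-pieces $\Psi_k(g)(X)$ and $\Psi_k(g')(X')$ of a common fibre $\Psi_k(g)(J_j)=\Psi_k(g')(J_j)$ compare as $X$ and $\Psi_k(\theta^{-1}(g^{-1}g'))(X')$ inside $J_j$. This uses $g^{-1}g'\in S_G$ and $\theta$, and turns the comparison into an $F$-side question.
\end{enumerate}
You are also right that minimality must be assumed; the paper's proof invokes it as well.

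\textbf{Two points to tighten.}

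First, as formulated, Step~1 ranges over all pairs $(w,P)$. But an interval such as $\Psi_k(f)(J_j)$ with $f\in F\smallsetminus S_F$ is not contained in a single piece: its image under $h_F^k$ is the whole arc $f([x_j,x_{j+1}])$. So the ``transport into a piece of the other type'' induction does not apply to it. Restrict to compatible pairs: the rightmost letter of the reduced word lies in $F\smallsetminus S_F$ and $P$ is an $I$-piece, or it lies in $G\smallsetminus S_G$ and $P$ is a $J$-piece. Nothing is lost. Every junction point bounds both an $I$-piece and a $J$-piece, and $\Psi_k(S_F)$ fixes every junction point. Hence every point of $E_k$ is still an endpoint of a compatible interval.

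Second, Step~2 is unnecessary. A cyclic-order-preserving bijection between dense subsets of $\T$ already extends uniquely to a homeomorphism. Density of $E_1$ and $E_2$ follows from minimality, as you note.
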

	\begin{proof}
		For \(q\in\{1,2\}\), let \(\Gamma_F^q,\Gamma_G^q\leq H_q\), the semi-conjugacies
		\(h_{F_q},h_{G_q}:\T\to\T\), and the isomorphism \(\Psi_q:F*_\theta G\xrightarrow{\sim} H_q\) be as in Definition~\ref{d.action_product}. Also set
		\[
		X_F^q:=\core(h_{F_q}),
		\qquad
		X_G^q:=\core(h_{G_q}).
		\]
		Let \(\widetilde X_F^q\subset X_F^q\) and \(\widetilde X_G^q\subset X_G^q\) denote the subsets
		where \(h_{F_q}\) and \(h_{G_q}\) are injective, respectively.
		
		By the defining properties of the semi-conjugacies, for every \(f\in F\), \(g\in G\), one has
		\[
		h_{F_q}\circ \Psi_q(f)=f\circ h_{F_q},
		\qquad
		h_{G_q}\circ \Psi_q(g)=g\circ h_{G_q},
		\]
		and therefore, for \(x\in \widetilde X_F^q\) and \(y\in \widetilde X_G^q\),
		\[
		\Psi_q(f)(x)=h_{F_q}^{-1}fh_{F_q}(x),
		\qquad
		\Psi_q(g)(y)=h_{G_q}^{-1}gh_{G_q}(y).
		\]
		
		We may thus define maps
		\[
		\alpha_F:\widetilde X_F^1\to \widetilde X_F^2,
		\qquad
		\alpha_G:\widetilde X_G^1\to \widetilde X_G^2
		\]
		by
		\[
		\alpha_F(x):=h_{F_2}^{-1}h_{F_1}(x),
		\qquad
		\alpha_G(x):=h_{G_2}^{-1}h_{G_1}(x).
		\]
		Since \(h_{F_q}\) and \(h_{G_q}\) are monotone semi-conjugacies, both \(\alpha_F\) and
		\(\alpha_G\) are continuous and order-preserving, and extend continuously to \(X_F^1\) and
		\(X_G^1\), respectively.
		
		We claim that these extensions agree on \(X_F^1\cap X_G^1\). Indeed, observe that the subsets \(\widetilde X_F^q\) and \(\widetilde X_G^q\) are disjoint. Now, suppose that \(x\in X_F^1\cap X_G^1\). By items \ref{iv.action_product} and \ref{v.action_product} of
		Definition~\ref{d.action_product}, there exist \(i,j\in\{1,\ldots,n\}\) such that
		\[
		x\in h_{F_1}^{-1}(x_i)\cap h_{G_1}^{-1}(y_j).
		\]
		Since this intersection has empty interior, the point \(x\) is a limit point of both
		intervals \(h_{F_1}^{-1}(x_i)\) and \(h_{G_1}^{-1}(y_j)\). Moreover, by
		item~\ref{vi.action_product} of Definition~\ref{d.action_product}, one has
		\[
		j=\sigma(i)\qquad\text{or}\qquad j=\sigma(i-1).
		\]
		Assume, without loss of generality, that \(j=\sigma(i)\). Then
		\[
		x=\sup h_{F_1}^{-1}(x_i)=\inf h_{G_1}^{-1}(y_{\sigma(i)}).
		\]
		and therefore
		\[
		\alpha_F(x)=\sup h_{F_2}^{-1}(x_i)
		=\inf h_{G_2}^{-1}(y_{\sigma(i)})
		=\alpha_G(x).
		\]
		The other case is analogous. Hence \(\alpha_F\) and \(\alpha_G\) glue together to define a
		continuous order-preserving map
		\[
		\alpha:X_F^1\cup X_G^1\longrightarrow X_F^2\cup X_G^2.
		\]
		
		\smallskip
		\noindent\textbf{Compatibility with the stabilizers.} 
		
		Set \(S_q:=\Gamma_F^q\cap \Gamma_G^q \), for \(q\in\{1,2\}\). We claim that, for every \(s\in S_1\), there exists \(s'\in S_2\) such that 
		\[ \alpha\circ s=s'\circ \alpha \qquad\text{on}\quad X_F^1\cup X_G^1. \] 
		Fix \(s\in S_1\). Since \(s\in \Gamma_F^1\), there exists \(s_F\in S_F\) such that \(\Psi_1(s_F)=s\).
		
		Let \(s':=\Psi_2(s_F)\in S_2\). Then, for every \(x\in \widetilde X_F^1\), the defining property of the semi-conjugacies gives 
		\[ s(x)=h_{F_1}^{-1}s_Fh_{F_1}(x), \qquad s'(\alpha(x))=h_{F_2}^{-1}s_Fh_{F_2}(\alpha(x)). \] 
		Since \(\alpha(x)=h_{F_2}^{-1}h_{F_1}(x)\), it follows that 
		\[ \alpha\circ s(x)=s'\circ \alpha(x) \qquad\text{for every }x\in \widetilde X_F^1. \]
		On the other hand, since \(H_1\) and \(H_2\) are amalgamated products of circle actions with respect to the same isomorphism \(\theta:S_F\to S_G\), the element \(s\in S_1\) corresponds on the \(G\)-side to \(\theta(s_F)\in S_G\), and the same holds for \(s'\in S_2\). Therefore, for every \(x\in \widetilde X_G^1\), 
		\[ s(x)=h_{G_1}^{-1}\theta(s_F)h_{G_1}(x), \qquad s'(\alpha(x))=h_{G_2}^{-1}\theta(s_F)h_{G_2}(\alpha(x)), \] 
		and hence 
		\[ \alpha\circ s(x)=s'\circ \alpha(x) \qquad\text{for every }x\in \widetilde X_G^1. \] 
		By continuity, the same identity holds on \(X_F^1\cup X_G^1\). Thus, for every \(s\in S_1\), there exists \(s'\in S_2\) such that 
		\[ \alpha\circ s=s'\circ \alpha \qquad\text{on}\quad X_F^1\cup X_G^1. \]
		 
		\smallskip 
		
		\noindent\textbf{Inductive extension to the whole circle.} 
		
		We now extend \(\alpha\) inductively to the whole circle. For each \(i\in\{1,\ldots,n\}\) and each \(\xi\in F.x_i\), choose \(f_\xi\in F\) such that \(f_\xi(x_i)=\xi\). Similarly, for each \(\eta\in G.y_i\), choose \(g_\eta\in G\) such that \(g_\eta(y_i)=\eta\). Also set 
		\[ I_{x_i}^q:=h_{F_q}^{-1}(x_i), \qquad J_{y_i}^q:=h_{G_q}^{-1}(y_i), \quad \text{ for } q\in\{1,2\}. \]
		Define inductively 
		\begin{align*} Y_F^m&= \begin{cases} X_F^1, & \text{if }m=0,\\[0.4em] \displaystyle \bigcup_{i=1}^n \bigcup_{\xi\in F.x_i} \Psi_1(f_\xi)\bigl(Y_G^{m-1}\cap I_{x_i}^1\bigr), & \text{if }m\ge 1, \end{cases}\\[0.8em] Y_G^m&= \begin{cases} X_G^1, & \text{if }m=0,\\[0.4em] \displaystyle \bigcup_{i=1}^n \bigcup_{\eta\in G.y_i} \Psi_1(g_\eta)\bigl(Y_F^{m-1}\cap J_{y_i}^1\bigr), & \text{if }m\ge 1, \end{cases} 
		\end{align*} 
		and 
		\begin{align*} Z_F^m&= \begin{cases} X_F^2, & \text{if }m=0,\\[0.4em] \displaystyle \bigcup_{i=1}^n \bigcup_{\xi\in F.x_i} \Psi_2(f_\xi)\bigl(Z_G^{m-1}\cap I_{x_i}^2\bigr), & \text{if }m\ge 1, \end{cases}\\[0.8em] Z_G^m&= \begin{cases} X_G^2, & \text{if }m=0,\\[0.4em] \displaystyle \bigcup_{i=1}^n \bigcup_{\eta\in G.y_i} \Psi_2(g_\eta)\bigl(Z_F^{m-1}\cap J_{y_i}^2\bigr), & \text{if }m\ge 1. \end{cases} 
		\end{align*} 
		We then define \(\alpha\) inductively by 
		\begin{align*} \alpha\big|_{\Psi_1(f_\xi)(Y_G^m\cap I_{x_i}^1)} &:=\Psi_2(f_\xi)\,\alpha\big|_{Y_G^m\cap I_{x_i}^1}\,\Psi_1(f_\xi)^{-1},\\ \alpha\big|_{\Psi_1(g_\eta)(Y_F^m\cap J_{y_i}^1)} &:=\Psi_2(g_\eta)\,\alpha\big|_{Y_F^m\cap J_{y_i}^1}\,\Psi_1(g_\eta)^{-1}. 
		\end{align*} 
		Set 
		\[ Y:=\bigcup_{m\ge 0}(Y_F^m\cup Y_G^m), \qquad Z:=\bigcup_{m\ge 0}(Z_F^m\cup Z_G^m). \] 
		By induction, \(\alpha\) extends to a well-defined continuous monotone map of degree one 
		\[ \alpha:\overline Y\longrightarrow \overline Z. \] 
		Moreover, \(\overline Y\) and \(\overline Z\) are closed invariant subsets for the actions of \(H_1\) and \(H_2\), respectively. Since both actions are minimal, it follows that 
		\[ \overline Y=\overline Z=\T. \]
		Hence \(\alpha\) extends to a continuous monotone degree-one map 
		\[ \alpha:\T\to\T. \] 
		Finally, by construction, for every \(\gamma_1\in \Gamma_F^1\cup \Gamma_G^1\) there exists \(\gamma_2\in \Gamma_F^2\cup \Gamma_G^2\) such that 
		\[ \alpha\circ \gamma_1=\gamma_2\circ \alpha \qquad\text{on }\T. \] 
		Since 
		\[ H_1=\Gamma_F^1*_{S_1}\Gamma_G^1, \qquad H_2=\Gamma_F^2*_{S_2}\Gamma_G^2, \] 
		it follows that \(\alpha\) conjugates the actions of \(H_1\) and \(H_2\) on \(\T\).
	\end{proof}
	
	
	\section{Tracking the number of fixed points}\label{s.tracking}
	
	
	In this section we study how the number of fixed points behaves under the amalgamated
	product construction. Our goal is to produce actions on \(\T\) with at most \(2n\) fixed
	points. We begin with a general mechanism yielding such examples, and then derive
	conditions ensuring that the resulting actions are not conjugate into any subgroup of
	\(\psl^{(k)}(2,\RR)\).
	
	\subsection{Constructing group actions with at most \(2n\) fixed points} The main result of this subsection is Theorem~\ref{t.main_examples_2n}, which provides a general mechanism for constructing group actions on \(\T\) with at most \(2n\) fixed points. Its proof is based on Lemma~\ref{l.fixed_ping-pong} below.
	
	For a nonempty subset \(U\) of a topological space, we denote by \(b_0(U)\) the number of
	connected components of \(U\).
	
	\begin{lemma}\label{l.fixed_ping-pong}
		Let \(F,G\leq \homeo_+(\T)\), set \(H:=\langle F,G\rangle\) and \(S:=F\cap G\), and let
		\((U,V)\) be a proper ping-pong partition for \(F\) and \(G\). Assume that the action of
		\(H\) on \(\T\) is minimal. Then, for every \(h\in H\) which is not conjugate in \(H\) to
		an element of \(F\cup G\), one has
		\[
		\#\fix(h)\le 2\min\{b_0(U),b_0(V)\}.
		\]
	\end{lemma}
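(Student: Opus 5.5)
Since the bound and the hypothesis are both invariant under conjugation in \(H\), I will start by conjugating \(h\) into a convenient normal form. By the normal form theorem for \(H\cong F*_S G\) (given by the ping-pong lemma applied to \((U,V)\)), every element not conjugate into \(F\cup G\) is conjugate to a cyclically reduced word
\[
h=f_1g_1f_2g_2\cdots f_kg_k,\qquad f_j\in F\smallsetminus S,\ g_j\in G\smallsetminus S,\ k\ge1.
\]
So I may assume \(h\) has this form. The ping-pong inclusions then give \(h(V)\subset V\); more precisely, \(g_k(V)\subset U\), then \(f_k(U)\subset V\), and so on, so that \(h(\overline V)\subset \overline{V}\) with \(h(V)\subset f_1(U)\). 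Symmetrically, \(h^{-1}(U)\subset U\), since \(h^{-1}=g_k^{-1}f_k^{-1}\cdots g_1^{-1}f_1^{-1}\) starts by sending \(U\) into \(V\) and ends in \(G^*(V)\subset U\). By the symmetric normal form \(g_1f_1\cdots\), the roles of \(U\) and \(V\) can be exchanged.

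Next I will count fixed points component by component. Because \(h\) permutes nothing outside a finite combinatorial structure, I will look at the components \(C_1,\dots,C_r\) of \(V\), with \(r=b_0(V)\). The map \(h\) sends each \(\overline{C_i}\) into some \(\overline{C_{\tau(i)}}\), which defines a self-map \(\tau\) of \(\{1,\dots,r\}\). A fixed point of \(h\) inside \(\overline V\) must lie in some \(\overline{C_i}\) with \(\tau(i)=i\). For such an \(i\), the map \(h\) is an increasing self-map of the closed arc \(\overline{C_i}\). I claim it has at most two fixed points there, and in fact at most one attracting one in the interior.

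The key input for this claim is minimality. If \(h\) fixed an interval pointwise, or had many fixed points in \(\overline{C_i}\), I would produce a proper closed invariant set, or contradict Lemma~\ref{l.minimal_ping-pong}. Concretely, iterating gives \(h^m(V)\subset h^{m-1}(V)\). Together with the fact that \(H\)-orbits are dense, so that \(\overline{U\cup V}=\T\), this should force \(\bigcap_m h^m(\overline{C_i})\) to be a single point or a single attracting set. I expect this contraction step to be the main obstacle. Ping-pong gives only set inclusions, not contraction, so here I would try to argue that a nondegenerate \(h\)-invariant interval \(L=\bigcap_m h^m(\overline{C_i})\) leads to a contradiction. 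Such an \(L\) would be invariant under \(h\) but would be mapped by the letters of the word into \(U\) and \(V\) in a way that yields an open set whose \(H\)-orbit misses an open subset, or else that \(L\) contains a wandering interval, contradicting minimality.

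Finally I will bound the fixed points outside \(\overline V\), meaning in \(\T\smallsetminus\overline V\subset\overline U\) by Lemma~\ref{l.minimal_ping-pong}, by applying the same argument to \(h^{-1}\) on the components of \(U\). Points of \(\partial V\cap\partial U\) are counted once. Each fixed component contributes at most one fixed point for \(h\) on \(V\) and one for \(h^{-1}\) on \(U\), and since fixed points of \(h\) on the circle alternate between attracting and repelling sides, the two counts are equal. This gives \(\#\fix(h)\le 2\,b_0(V)\), and by symmetry \(\#\fix(h)\le 2\,b_0(U)\). Taking the minimum gives the stated bound.
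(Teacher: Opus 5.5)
\textbf{There is a genuine gap at the contraction step.} Your reduction to a cyclically reduced word $h=f_1g_1\cdots f_kg_k$ with $h(V)\subset V$, and your component-by-component analysis, match the paper. The step that carries the whole proof, however, is left open. You need that, for a component $C_i$ of $V$ with $h(\overline{C_i})\subset\overline{C_i}$, the set $L=\bigcap_m h^m(\overline{C_i})$ is a single point. You do not prove this, and neither mechanism you suggest (an open set whose orbit misses an open set, or a wandering interval in $L$) is actually produced by anything in your argument.

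The missing idea is an $H$-invariant set built from the ping-pong itself. Let $\Omega_n$ be the union of two families of images:
\begin{itemize}
\item the images of $U$ under alternating words of length $n$ whose rightmost letter lies in $F^*=F\smallsetminus S$;
\item the images of $V$ under alternating words of length $n$ whose rightmost letter lies in $G^*=G\smallsetminus S$.
\end{itemize}
So $\Omega_0=U\cup V$, $\Omega_1=F^*(U)\cup G^*(V)$, and so on. The ping-pong inclusions give $\Omega_{n+1}\subset\Omega_n$. Absorbing letters of $S$, one checks that $f(\Omega_{n+1})\subset\Omega_n$ and $g(\Omega_{n+1})\subset\Omega_n$ for all $f\in F$, $g\in G$. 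Hence $\Omega_\infty=\bigcap_n\Omega_n$ is $H$-invariant. It misses $\partial U\neq\emptyset$, so its complement is a nonempty $H$-invariant set. By minimality that complement is dense, so $\Omega_\infty$ has empty interior. Since $h$ is cyclically reduced, $h^m(C_i)\subset\Omega_{2km}$, so $L^\circ\subset\Omega_\infty$ and $L$ is a point. Every fixed point of $h$ in $\overline{C_i}$ lies in $L$, so each closed component contains at most one fixed point. Your first formulation, ``at most two'', would not give the bound.

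\textbf{Your final count takes a different route from the paper.} The paper merges consecutive same-side components so that components of $U$ and $V$ alternate, and then uses $\#\fix(h)\le b_0(\widetilde U)+b_0(\widetilde V)$. Your justification is incorrect as stated: fixed points need not alternate between attracting and repelling. A fixed common endpoint of a component of $U$ and a component of $V$ is parabolic: attracting from the $V$-side and repelling from the $U$-side.

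Once the contraction step is available, the following does hold:
\begin{itemize}
\item fixed points in $\T\smallsetminus\overline U$ are attracting;
\item fixed points in $\T\smallsetminus\overline V$ are repelling;
\item fixed points in $\overline U\cap\overline V$ are parabolic;
\item the numbers of attracting and repelling fixed points agree, since the displacement changes sign exactly at these points and the changes alternate.
\end{itemize}
Then $\#\fix(h)=2\,\#\mathrm{att}+\#\mathrm{par}\le 2\,\#(\fix(h)\cap\overline V)\le 2b_0(V)$. This version avoids having to check that the coarsened pair is again a proper ping-pong partition. But you would still need to prove this classification of fixed points by side, and it too rests on the contraction step.
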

	
	\begin{proof}
		Set \(F^*:=F\smallsetminus S\) and \(G^*:=G\smallsetminus S\). Since \(h\in H\) is not conjugate in \(H\) to an element of \(F\cup G\), and since \(H\cong F*_S G\),
		either \(h\) or \(h^{-1}\) is conjugate in \(H\) to an element \(\tilde h\) of the form
		\[
		\tilde h=g_k f_k\cdots g_1 f_1,
		\qquad
		f_i\in F^*,\ g_i\in G^*.
		\]
		In particular, \(\tilde h\) is represented by a cyclically reduced alternating word.
		Moreover, conjugation preserves the number of fixed points, and
		\(\#\fix(h)=\#\fix(h^{-1})\). Therefore, \(\#\fix(h)=\#\fix(\tilde h)\).
		
		By the ping-pong property, one has \(\tilde h(U)\subset U\). We claim that this
		inclusion is proper. Indeed, if \(\tilde h(U)=U\), then, since the action is minimal,
		Lemma~\ref{l.minimal_ping-pong} gives \(\T= \overline{ U \cup V}\). Therefore \(\tilde h(V)=V\) as well. But then \(\tilde h\) preserves both sides of the ping-pong partition, which by the normal-form consequence of the ping-pong lemma forces \(\tilde h\in S\), a contradiction.
		
		Let \(I\) be a connected component of \(U\). Then either \(\fix(\tilde h)\cap I=\emptyset\), or the nested family \(\{\tilde h^n(\overline I)\}_{n\in\NN}\) defines a nonempty attracting interval 
		\[ A:=\bigcap_{n\in\NN}\tilde h^n(\overline I), \] 
		which may a priori be degenerate. We claim that \(A\) is always reduced to a single point. To see this, consider the following family of open sets, analogous to the one used in the proof of Lemma~\ref{l.minimal_ping-pong}:
		\begin{align*} 
			\Omega_0 &= U\cup V,\\
			\Omega_1 &= G^*(V)\cup F^*(U),\\
			\Omega_2 &= G^*F^*(U)\cup F^*G^*(V),\\
			\Omega_3 &= G^*F^*G^*(V)\cup F^*G^*F^*(U),\ldots 
		\end{align*} 
		Since \(I\subset U\), one has \(\tilde h^n(I)\subset \Omega_{2kn}\) for every \(n\in\NN\). As
		\((\Omega_n)\) is decreasing,  it follows that \(\tilde h^n(I)\subset \Omega_i\) for every
		\(i\le 2kn\). Hence every point of \(A^\circ\) belongs to \(\Omega_i\) for every \(i\), so
		\[
		A^\circ\subset \bigcap_{n\in\NN}\Omega_n.
		\]		
		Moreover, for every \(f\in F\), \(g\in G\), and every \(n\ge0\), one has 
		\[ f(\Omega_{n+1})\subset \Omega_n, \qquad g(\Omega_{n+1})\subset \Omega_n. \] 
		The set \(\bigcap_{n\in\NN}\Omega_n\) is \(H\)-invariant, and its complement is nonempty,
		since \(\partial U\cap \Omega_0=\emptyset\). As the action of \(H\) is minimal, this complement must be dense. It follows that \(A\) has empty interior, and therefore \(A\) is reduced to a single point.
		
		We conclude that, for each connected component \(I\) of \(U\), either
		\(\fix(\tilde h)\cap I=\emptyset\), or the sequence \(\tilde h^n(\overline I)\) converges to
		a point of \(\overline I\), which is the unique fixed point of \(\tilde h\) in \(\overline I\).
		If this point lies in the interior of \(I\), then it is an attracting fixed point of
		\(\tilde h\).
		
		Applying the same argument to \(\tilde h^{-1}\) and the connected components \(J\) of \(V\),
		we obtain that, for each such \(J\), either \(\fix(\tilde h)\cap J=\emptyset\), or the
		sequence \(\tilde h^{-n}(\overline J)\) converges to a point of \(\overline J\), which is
		the unique fixed point of \(\tilde h\) in \(\overline J\). If this point lies in the
		interior of \(J\), then it is a repelling fixed point of \(\tilde h\). Finally, if $\tilde{h}^n(\overline{I})$ converges to an endpoint of $I$ which is also an endpoint of a component $J$ of $V$, then $\tilde{h}$ has a parabolic fixed point which is the only fixed point of $\tilde{h}$ in $\overline{I}\cup \overline{J}$.
		
		We have shown that \(\tilde h\) has at most one fixed point in each closed component \(\overline I\) of \(\overline U\) and each closed component \(\overline J\) of \(\overline V\). In particular, 
		\[ \#\bigl(\fix(\tilde h)\cap \overline U\bigr)\leq b_0(U), \qquad \#\bigl(\fix(\tilde h)\cap \overline V\bigr)\leq b_0(V). \] 
		Since the action is minimal, Lemma~\ref{l.minimal_ping-pong} gives \(\overline{U\cup V}=\T\), and therefore 
		\[ \#\fix(h)=\#\fix(\tilde h)\leq b_0(U)+b_0(V). \] 
		
		We now replace the given ping-pong partition \((U,V)\) by a coarser one obtained as follows. Whenever two or more consecutive connected components belong to the same side of the partition, we replace the whole maximal block by the smallest open interval of \(\T\) containing it. Performing this simultaneously for \(U\) and \(V\), we obtain a new proper ping-pong partition \((\widetilde{U},\widetilde{V})\). By construction, the connected components of \(\widetilde{U}\) and \(\widetilde{V}\) alternate around the circle. In particular, 
		\[ b_0(\widetilde{U})=b_0(\widetilde{V}) \le \min\{b_0(U),b_0(V)\}. \] 
		Applying the previous argument to this coarser partition, we get 
		\[ \#\fix(h)\le b_0(\widetilde{U})+b_0(\widetilde{V}) \le 2\min\{b_0(U),b_0(V)\}. \]
		This proves the lemma.
	\end{proof}
	
	\begin{remark}
		Lemma~\ref{l.fixed_ping-pong} has concrete applications to finitely generated analytic
		subgroups of \(\mathrm{Diff}^\omega_+(\T)\). In particular, it can be used to show that a
		certain explicit subgroup of the amalgamated product
		\(\psl(2,\RR)*_{\SO(2)}\mathrm{SL}(2,\RR)\) acts minimally on \(\T\), every nontrivial element
		has at most two fixed points, and the action is not topologically conjugate to a
		subgroup of \(\psl(2,\RR)\). We postpone the explicit construction to a subsequent
		article.
	\end{remark}
	
	We now apply Lemma~\ref{l.fixed_ping-pong} to the amalgamated product construction of
	Section~\ref{s.amalgamated_product}. Under the hypotheses below, the resulting action still has at
	most \(2n\) fixed points.
	
	\begin{theorem}\label{t.main_examples_2n}
		Let \(F,G\leq \homeo_+(\T)\) be countable subgroups such that every nontrivial element has at most \(2n\) fixed points. Let
		\[
		\bar x=(x_1,\ldots,x_n),\qquad \bar y=(y_1,\ldots,y_n)
		\]
		be cyclically ordered \(n\)-tuples of points in \(\T\). Assume that, for all distinct
		\(i,j\in\{1,\ldots,n\}\),
		\begin{enumerate}[\rm i.]
			\item \(x_i\notin F.x_j\) and \(y_i\notin G.y_j\);
			\item \(\stab_F(x_i)=S_F\) and \(\stab_G(y_i)=S_G\), where
			\(\theta:S_F\xrightarrow{\sim}S_G\) is a fixed isomorphism;
			\item \(\fix(s_f)=\{x_1,\ldots,x_n\}\) and \(\fix(s_g)=\{y_1,\ldots,y_n\}\) for every nontrivial \(s_f\in S_F\) and \(s_g\in S_G\);
			\item \(S_F\lneqq F\), \(S_G\lneqq G\), and at least one of the indices
			\([F:S_F]\) and \([G:S_G]\) is greater than \(2\).
		\end{enumerate}
		Let \(\sigma\) be any cyclic order-preserving permutation of \(\{1,\ldots,n\}\). Then the
		amalgamated product
		\[
		(F,\bar x)\star_{\theta,\sigma}(G,\bar y)
		\]
		has at most \(2n\) fixed points.
	\end{theorem}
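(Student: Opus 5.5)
Let \(H=(F,\bar x)\star_{\theta,\sigma}(G,\bar y)\), with \(\Gamma_F,\Gamma_G\), \(h_F,h_G\) and \(\Psi\) as in Definition~\ref{d.action_product}, and choose the minimal realization given by Theorem~\ref{t.action_product}. Conjugating an element does not change its number of fixed points. By the normal form in \(F*_\theta G\), every nontrivial \(h\in H\) is therefore of one of three types: conjugate into \(S:=\Gamma_F\cap\Gamma_G\); conjugate into \(\Gamma_F\smallsetminus S\) or \(\Gamma_G\smallsetminus S\); or not conjugate into \(\Gamma_F\cup\Gamma_G\).

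\textbf{Third type.} Here I apply Lemma~\ref{l.fixed_ping-pong} to the proper ping-pong partition
\[
\Bigl(\textstyle\bigcup_i h_F^{-1}(x_i)^\circ,\ \bigcup_i h_G^{-1}(y_{\sigma(i)})^\circ\Bigr)
\]
from item~\ref{vi.action_product}. Each side has exactly \(n\) components, and the action is minimal, so the lemma gives \(\#\fix(h)\le 2n\) directly.

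\textbf{Second type.} Take \(\gamma=\Psi(f)\) with \(f\in F\smallsetminus S_F\). By item~\ref{ii.action_product}, \(h_F\) maps \(\fix(\gamma)\) into \(\fix(f)\), which has at most \(2n\) points. So it suffices to show that \(\gamma\) has at most one fixed point in each fiber \(h_F^{-1}(z)\) with \(z\in\fix(f)\).
\begin{itemize}
\item If \(z\notin\bigcup_i F.x_i\), the fiber is a single point and there is nothing to prove.
\item If \(z=\xi\in F.x_i\) with \(f(\xi)=\xi\), then \(f\in\stab_F(\xi)\), which is a conjugate \(f_\xi S_F f_\xi^{-1}\) of \(S_F\). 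By the construction, \(\gamma\) restricted to \(I_\xi\) is conjugate, via \(\Psi(f_\xi)\), to the restriction of \(\Psi(s)\) to \(I_{x_i}\), where \(s:=f_\xi^{-1}ff_\xi\in S_F\smallsetminus\{1\}\).
\end{itemize}
So the count reduces to fixed points of nontrivial elements of \(S\) inside the intervals \(I_{x_i}=h_F^{-1}(x_i)\).

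\textbf{First type, and the remaining count.} This is the step I expect to be the main obstacle. I need two facts for \(s\in S\smallsetminus\{1\}\):
\begin{itemize}
\item \(\#\fix(s)\le 2n\) on \(\T\);
\item \(s\) has exactly one fixed point in each \(I_{x_i}\).
\end{itemize}
Using hypothesis~(iii) with \(\theta\):
\begin{itemize}
\item Outside \(\bigcup_i I_{x_i}\), i.e.\ on \(\bigcup_i J^\circ_{y_{\sigma(i)}}\), the map \(h_F\) sends fixed points of \(s\) to \(\fix(s_F)\smallsetminus\{x_i\}=\emptyset\). So there are none.
\item Inside \(I_{x_i}\), the core of \(h_G\) meets \(I_{x_i}\). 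The action of \(s\) there is the \(G\)-side action of \(\theta(s)\), semi-conjugated by \(h_G\). Its fixed points project into \(\fix(\theta(s))=\{y_1,\dots,y_n\}\), and \(h_G(I_{x_i})\) is a closed arc.
\item A point of \(I_{x_i}\) can map to \(y_j\) only if it lies in \(h_G^{-1}(y_j)\), and these fibers meet \(I_{x_i}\) only at its endpoints \(a_i,b_i\).
\end{itemize}
Since the fixed set is closed and \(s\) is order-preserving on \(I_{x_i}\), this leaves fixed points only at \(a_i\) and \(b_i\). Hence \(\#\fix(s)\le 2n\). Moreover, by item~\ref{vi.action_product}, \(a_i\) and \(b_i\) are shared endpoints of adjacent blocks.

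Plugging this back into the second type: there, \(\gamma\) has at most two fixed points in \(I_\xi\), namely its endpoints. To get the bound \(2n\) I will use the dynamics of \(f\) at \(\xi\), attracting on one side and repelling on the other. The fixed endpoints must then be matched against neighbouring fixed points of \(f\): the total is bounded by counting endpoints of fixed fibers as at most the number of fixed points of \(f\), treating each \(\xi\) as contributing its two boundary points while pairing them with adjacent points of \(\fix(f)\) in \(\core(h_F)\). If this pairing fails, I would instead refine by applying Lemma~\ref{l.fixed_ping-pong}-type arguments inside \(\Gamma_F\). I expect this bookkeeping to be the delicate step.
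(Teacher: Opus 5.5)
There is a genuine gap, in your second type. The rest of your decomposition is the paper's: elements conjugate into \(\Psi(F)\cup\Psi(G)\) are treated separately, and all others are handled by Lemma~\ref{l.fixed_ping-pong} with \(n\) components on each side. Your treatment of a nontrivial \(s\in S\) is correct and more explicit than the paper's: \(h_F\) forces \(\fix(\Psi(s))\subset\bigcup_i I_{x_i}\) and \(h_G\) forces \(\fix(\Psi(s))\subset\bigcup_j J_{y_j}\), so \(\fix(\Psi(s))\subset\{a_1,b_1,\dots,a_n,b_n\}\). The second type, however, you leave as unresolved ``bookkeeping''. The pairing you sketch assumes \(f\) is attracting on one side of \(\xi\) and repelling on the other, which the hypotheses do not give. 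For instance, with \(n=2\), \(F\) torsion-free Fuchsian and \(x_1,x_2\) the endpoints of the axis of a primitive hyperbolic element, the hypotheses hold but \(x_1\) is attracting on both sides. The naive count you are trying to improve allows up to \(2n\) fixed points of \(f\), each possibly blown up into an interval with two fixed endpoints, and so only yields \(4n\).

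The missing observation is one you almost wrote down. If \(f\in F\smallsetminus S_F\) fixes \(\xi=f_\xi(x_i)\), then \(f=f_\xi s f_\xi^{-1}\) with \(s\in S_F\smallsetminus\{1\}\). Hence \(\Psi(f)=\Psi(f_\xi)\Psi(s)\Psi(f_\xi)^{-1}\) on all of \(\T\), not only on \(I_\xi\). So \(\Psi(f)\) is conjugate in \(H\) into \(S\) and falls under your first type, giving \(\#\fix(\Psi(f))=\#\fix(\Psi(s))\le 2n\). Equivalently, hypothesis (iii) transported by \(f_\xi\) gives \(\fix(f)=f_\xi(\{x_1,\dots,x_n\})\): exactly \(n\) blown-up points, each contributing two fixed endpoints. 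Your trichotomy should therefore read: conjugate into \(S\); conjugate into \(\Gamma_F\cup\Gamma_G\) but not into \(S\); neither. In the middle class, \(f\) fixes no point of \(\bigcup_i F.x_i\), since otherwise it would be conjugate into \(S_F\). So every fiber of \(h_F\) over \(\fix(f)\) is a single point, and \(h_F\) restricts to a bijection \(\fix(\Psi(f))\to\fix(f)\). This gives \(\#\fix(\Psi(f))=\#\fix(f)\le 2n\), and symmetrically for \(G\). With this correction your argument closes. It is then exactly the paper's observation that only (conjugates of) stabilizer elements have their fixed sets altered by the blow-up.
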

	
	\begin{proof}
		Using the notation of Definition~\ref{d.action_product}, observe first that any element of the amalgamated product which is conjugate into \(\Psi(F)\) or \(\Psi(G)\) has at most \(2n\) fixed points. Indeed, the only elements whose fixed-point set is modified by the blow-up are those in the stabilizers \(S_F\) and \(S_G\), and these fix precisely the sets \(\bar x\) and \(\bar y\), respectively. After the blow-up, they therefore have exactly \(2n\) fixed points. 
		
		Now \(\Psi(F)\) and \(\Psi(G)\) admit a proper ping-pong partition with \(n\) connected components on each side. Hence, by Lemma~\ref{l.fixed_ping-pong}, every element of the amalgamated product which is not conjugate into \(\Psi(F)\cup\Psi(G)\) also has at most \(2n\) fixed points. Therefore every nontrivial element of \[ (F,\bar x)\star_{\theta,\sigma}(G,\bar y) \] has at most \(2n\) fixed points.
	\end{proof}
	
	\subsection{Conditions for being M\"obius-like}
	
	The previous subsection gives a general mechanism for constructing actions with a uniform
	bound on the number of fixed points. We now turn to additional hypotheses under which the
	resulting amalgamated product is M\"obius-like. The key point is to control how the images
	of different components of the ping-pong partition are allowed to interlace on the circle.
	This is encoded in the following notion of properly unlinked points and intervals.
	
	\begin{definition}\label{d.unlinked} 
		Let \(G\subset \homeo_+(\T)\) be a subset, and let \(x,y\in \T\) be distinct. We say that the (partial) orbits of \(x\) and \(y\) are \emph{unlinked with respect to \(G\)} if, for every \(g\in G\), the pairs \(\{x,y\}\) and \(\{g(x),g(y)\}\) are unlinked, that is, if either
		\[ \{g(x),g(y)\}\subset [x,y] \qquad \text{or} \qquad \{g(x),g(y)\}\subset [y,x]. \] 
		A cyclically ordered \(n\)-tuple of points \((x_1,\ldots,x_n)\) is said to be \emph{properly unlinked with respect to \(G\)} if the following two conditions hold: 
		\begin{enumerate}[\rm i.] 
			\item for every pair \(\{x_i,x_j\}\), the (partial) orbits of \(x_i\) and \(x_j\) are unlinked with respect to \(G\); 
			\item for every \(g\in G\) such that \(\fix(g)\cap\{x_1,\ldots,x_n\}=\emptyset\), there exists a pair of consecutive points \(\{x_i,x_{i+1}\}\) such that \[ \{g(x_i),g(x_{i+1})\}\subset [x_i,x_{i+1}]. \] 
		\end{enumerate} 
		
		Similarly, for two intervals \(I,J\subset \T\), we say that the orbits of \(I\) and \(J\) are \emph{unlinked with respect to \(G\)} if, for every \(x\in I\) and \(y\in J\), the (partial) orbits of \(x\) and \(y\) are unlinked with respect to \(G\). 
		
		A cyclically ordered collection of intervals \((I_1,\ldots,I_n)\) is said to be \emph{properly unlinked with respect to \(G\)} if \((x_1,\ldots,x_n)\) is properly unlinked for every choice of distinct points \(x_i\in I_i\).
	\end{definition} 
	Observe that a single point \((x_1)\) is always properly unlinked.
	
	\begin{lemma}\label{l.properly_unlinked}
		Let \((x_1,\ldots,x_n)\) be a cyclically ordered \(n\)-tuple of points which is properly
		unlinked with respect to a subset \(G\subset \homeo_+(\T)\). Then, for every
		\(g\in G\) such that \(\fix(g)\cap\{x_1,\ldots,x_n\}=\emptyset\), there exists a pair of
		consecutive points \(\{x_i,x_{i+1}\}\) such that
		\[
		\{g(x_1),\ldots,g(x_n)\}\subset [x_i,x_{i+1}].
		\]
	\end{lemma}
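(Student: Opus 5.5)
Fix \(g\in G\) with \(\fix(g)\cap\{x_1,\ldots,x_n\}=\emptyset\). By condition~ii of Definition~\ref{d.unlinked}, we may choose a consecutive pair \(\{x_i,x_{i+1}\}\) with \(g(x_i),g(x_{i+1})\in[x_i,x_{i+1}]\). Since \(g\) preserves orientation and \(g(x_i)\neq x_i\), \(g(x_{i+1})\neq x_{i+1}\), both images lie in the open interval \((x_i,x_{i+1})\). The plan is to show that every other image \(g(x_j)\) also lands in \([x_i,x_{i+1}]\).

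First I determine which arc \(g\) sends \([x_{i+1},x_i]\) onto. Orientation preservation gives that \(g\) maps the arc \((x_i,x_{i+1})\) onto the arc \((g(x_i),g(x_{i+1}))\). This arc is either the short arc contained in \((x_i,x_{i+1})\), or the long arc containing \([x_{i+1},x_i]\). I then split into the corresponding two cases.

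\emph{Case 1: \(g(x_i)<g(x_{i+1})\) in the cyclic order of \((x_i,x_{i+1})\).} Then \(g([x_i,x_{i+1}])\subset (x_i,x_{i+1})\), and the complementary arc \([x_{i+1},x_i]\) is sent onto \([g(x_{i+1}),g(x_i)]\), which contains the whole of \([x_{i+1},x_i]\). This case is the delicate one, because the images \(g(x_j)\) for \(j\neq i,i+1\) lie in \([g(x_{i+1}),g(x_i)]\) and could a priori be anywhere there. To control them, I use the unlinking condition~i for each pair \(\{x_i,x_j\}\): the pair \(\{g(x_i),g(x_j)\}\) must lie in \([x_i,x_j]\) or in \([x_j,x_i]\). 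Since \(g(x_i)\in(x_i,x_{i+1})\subset(x_i,x_j)\), we get \(g(x_j)\in[x_i,x_j]\). The same argument with the pair \(\{x_{i+1},x_j\}\) uses \(g(x_{i+1})\in(x_i,x_{i+1})\subset(x_j,x_{i+1})\) to give \(g(x_j)\in[x_j,x_{i+1}]\). The intersection \([x_i,x_j]\cap[x_j,x_{i+1}]\) consists of \(\{x_j\}\) together with \([x_i,x_{i+1}]\). As \(g(x_j)\neq x_j\), we conclude \(g(x_j)\in[x_i,x_{i+1}]\).

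\emph{Case 2: the reversed order inside \((x_i,x_{i+1})\).} Here the same unlinking argument applies verbatim, since it used only where \(g(x_i)\) and \(g(x_{i+1})\) lie and not their relative order. So again \(g(x_j)\in[x_i,x_{i+1}]\) for every \(j\). Orientation preservation actually rules this case out once the images are known to be in the arc, but that is not needed.

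The main obstacle is to check the cyclic-interval bookkeeping in the intersection \([x_i,x_j]\cap[x_j,x_{i+1}]\) carefully. Because the points are cyclically ordered as \(x_i,x_{i+1},\ldots,x_j,\ldots\), the arc \([x_i,x_j]\) contains \([x_i,x_{i+1}]\), and \([x_j,x_{i+1}]\) also contains \([x_i,x_{i+1}]\). Their union is the whole circle, and their intersection is \([x_i,x_{i+1}]\cup\{x_j\}\), as claimed. The degenerate cases \(n\le 2\) are immediate: for \(n=1\) the statement is trivial, and for \(n=2\) it is condition~ii itself.
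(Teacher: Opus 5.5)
Your proof is correct and follows the paper's argument: condition~ii supplies the consecutive pair, and the unlinking of \(\{x_i,x_j\}\) and \(\{x_{i+1},x_j\}\) traps \(g(x_j)\) in \([x_i,x_j]\cap[x_j,x_{i+1}]=[x_i,x_{i+1}]\cup\{x_j\}\), where you rightly use \(g(x_j)\neq x_j\) to discard the extra point \(x_j\) that the paper's write-up silently drops (the case split is superfluous, as you note). One harmless slip is that orientation preservation does not force \(g(x_i),g(x_{i+1})\) into the open arc: in Case~2 one can have \(g(x_i)=x_{i+1}\) or \(g(x_{i+1})=x_i\), and Case~2 is not excluded but is simply the case settled by orientation alone; all the unlinking step needs is \(g(x_i)\in(x_i,x_{i+1}]\subset(x_i,x_j)\) and \(g(x_{i+1})\in[x_i,x_{i+1})\subset(x_j,x_{i+1})\), which do hold.
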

	
	\begin{proof}
		Fix \(g\in G\) such that \(\fix(g)\cap\{x_1,\ldots,x_n\}=\emptyset\). By the definition of
		properly unlinked, there exists \(k\in\{1,\ldots,n\}\) such that
		\[
		\{g(x_k),g(x_{k+1})\}\subset [x_k,x_{k+1}].
		\]
		Let \(t\in\{1,\ldots,n\}\setminus\{k,k+1\}\). Since the pairs \(\{x_t,x_k\}\) and
		\(\{x_t,x_{k+1}\}\) are unlinked with respect to \(G\), and since
		\[
		g(x_k)\in [x_k,x_{k+1}]\subset [x_k,x_t],
		\qquad
		g(x_{k+1})\in [x_k,x_{k+1}]\subset [x_t,x_{k+1}],
		\]
		it follows that \(g(x_t)\) belongs to both intervals \([x_k,x_t]\) and
		\([x_t,x_{k+1}]\). Therefore
		\[
		g(x_t)\in [x_k,x_t]\cap [x_t,x_{k+1}] = [x_k,x_{k+1}].
		\]
		Since this holds for every \(t\notin\{k,k+1\}\), we conclude that
		\[
		\{g(x_1),\ldots,g(x_n)\}\subset [x_k,x_{k+1}],
		\]
		as required.
	\end{proof}

	\begin{lemma}\label{l.mobius_ping-pong}
		Let \(F,G\leq \homeo_+(\T)\), set \(H:=\langle F,G\rangle\) and \(S:=F\cap G\), and let
		\((U,V)\) be a proper ping-pong partition for \(F\) and \(G\). Assume that the action of
		\(H\) on \(\T\) is minimal. 
		
		If the collection of intervals \(U\) is properly unlinked with
		respect to \(F\smallsetminus S\), then every element \(h\in H\) which is not conjugate
		in \(H\) to an element of \(F\cup G\) is M\"obius-like.
	\end{lemma}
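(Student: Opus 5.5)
As in the proof of Lemma~\ref{l.fixed_ping-pong}, I would first reduce to a cyclically reduced word: either \(h\) or \(h^{-1}\) is conjugate in \(H\) to
\[
\tilde h=g_k f_k\cdots g_1 f_1,\qquad f_i\in F^*:=F\smallsetminus S,\ g_i\in G^*:=G\smallsetminus S.
\]
Being M\"obius-like is invariant under conjugation and inversion, so it suffices to treat \(\tilde h\). The argument of Lemma~\ref{l.fixed_ping-pong} then gives:
\begin{itemize}
\item \(\tilde h(U)\subsetneq U\);
\item \(\tilde h\) has at most one fixed point in each closure \(\overline I\) of a component \(I\) of \(U\), attracting if it lies in \(I\);
\item at most one fixed point in each \(\overline J\) for a component \(J\) of \(V\), repelling if it lies in \(J\);
\item possibly a parabolic fixed point at a common endpoint of some \(I\) and \(J\).
\end{itemize}
A M\"obius-like element with fixed points has either exactly one parabolic fixed point, or exactly two fixed points, one attracting and one repelling. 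The task is therefore to show that \(\tilde h\) has at most one attracting point in \(\overline U\) and at most one repelling point in \(\overline V\). I must also show that these cannot coexist with a parabolic fixed point. Finally, the fixed-point-free case needs an argument of its own.

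The key step uses proper unlinking to collapse all of \(U\) into a single component. Write \(U=I_1\cup\cdots\cup I_n\), cyclically ordered. Since \(U\) is properly unlinked with respect to \(F^*\), I apply Lemma~\ref{l.properly_unlinked} to any choice of points \(u_j\in I_j\). Every \(f\in F^*\) maps \(U\) into \(V\), which is disjoint from \(U\), so \(f\) fixes none of the \(u_j\). Hence \(f(u_1),\ldots,f(u_n)\) all lie in a single gap \([u_i,u_{i+1}]\). Letting the \(u_j\) range over the components, I expect this to show that \(f(U)\) lies in one complementary arc between consecutive components of \(U\). Because \(V\) lies in these complementary arcs and the partition alternates, I would pass to the coarsened partition of Lemma~\ref{l.fixed_ping-pong} so that each gap contains exactly one component of \(V\). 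With that in place, \(f(U)\) lies in a single component \(J\) of \(V\). Then \(g_1f_1(U)\subset g_1(J)\) lies in a single component of \(U\), and iterating shows that \(\tilde h(U)\) is contained in one component \(I_*\) of \(U\).

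From this the fixed-point structure follows:
\begin{itemize}
\item \(\tilde h\) maps every component of \(U\) into \(I_*\), so any fixed point in \(\overline U\) lies in \(\overline{I_*}\). By the previous step there is at most one, and it attracts all of \(U\).
\item For repelling points I would use the complement: \(\tilde h^{-1}\) maps \(\T\smallsetminus\overline{I_*}\) into \(\T\smallsetminus\overline{\tilde h(U)}\). I would argue that \(\tilde h^{-1}(\overline V)\) is contained in a single component of \(V\), perhaps by applying the unlinking to the last letter through \(\tilde h^{-1}(V)\subset f_1^{-1}g_1^{-1}\cdots(V)\). If this fails, the alternative is to use that \(\tilde h\) maps the arc \(\T\smallsetminus\overline{I_*}\) strictly inside itself in reverse, giving a single interval containing all non-attracting fixed points, which again must be unique by the nested-intersection argument with the sets \(\Omega_n\).
\end{itemize}
This leaves at most one attracting and at most one repelling fixed point, and the shrinking-to-a-point argument shows that if both exist, the dynamics are north--south. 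If the fixed point of \(\overline{I_*}\) lies on \(\partial I_*\), it is the parabolic point described in Lemma~\ref{l.fixed_ping-pong} and is the only fixed point, so \(\tilde h\) is conjugate to a parabolic element. The case \(\fix(\tilde h)=\emptyset\) cannot occur: \(\tilde h(\overline{I_*})\subset\overline{I_*}\), so Brouwer gives a fixed point in \(\overline{I_*}\). Hence \(\tilde h\) is hyperbolic-like or parabolic-like, and in either case M\"obius-like.

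I expect the main obstacle to be the single-gap step, that is, showing that properly unlinked really forces \(f(U)\) into one component of \(V\) rather than merely placing the images of chosen points in one gap. The danger is that different choices of \(u_j\) might select different gaps \(i\). I would handle this by continuity and connectedness of the space of choices \(\prod_j I_j\): the selected gap cannot jump, since the image points never meet \(\partial U\).
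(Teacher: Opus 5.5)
Your proposal is correct and follows the paper's proof essentially step by step. You coarsen to an alternating partition and use Lemma~\ref{l.properly_unlinked} to show that every \(f\in F\smallsetminus S\) maps \(U\) into a single component of \(V\). You then deduce that \(\tilde h(U)\) and \(\tilde h^{-1}(V)\) each lie in a single component; your first suggestion, via the outermost letter \(f_1^{-1}\), is exactly the paper's argument. You conclude from the one-fixed-point-per-side count, the existence of a fixed point in \(\overline{I_*}\), and the fact that a boundary fixed point lies in \(\overline U\cap\overline V\) and is therefore the only one.

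The one difference is cosmetic. The paper applies the unlinking to the endpoint tuples \((x_1,x_3,\ldots,x_{2n-1})\) and \((x_2,\ldots,x_{2n})\) and matches the two gaps by orientation, whereas you use interior points. The obstacle you anticipate does not arise: each \(f(I_j)\) is connected and contained in \(V\), so one choice of points \(u_j\) already forces all the \(f(I_j)\) into the same component.
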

	
	\begin{proof}
		Set \(F^*=F\smallsetminus S\) and \(G^*=G\smallsetminus S\). 
		After the coarsening procedure used in the proof of Lemma~\ref{l.fixed_ping-pong}, we may
		assume that the connected components of \(U\) and \(V\) alternate around the circle.
		Since the action is minimal, Lemma~\ref{l.minimal_ping-pong} implies that
		\[
		\partial U=\partial V=\{x_k\}_{k=1}^{2n},
		\]
		where the points \(x_k\) are cyclically ordered and we have the following connected components
		\[
		I_k:=(x_{2k-1},x_{2k})\subset U,
		\qquad
		J_k:=(x_{2k},x_{2k+1})\subset V,
		\qquad
		k=1,\ldots,n.
		\] 
		Moreover, the properly unlinked hypothesis is preserved under the previous reduction.
		Since \(U\) is properly unlinked with respect to \(F^*\), any choice of points in distinct
		connected components of \(U\) is properly unlinked. In particular, the cyclically ordered
		\(n\)-tuples of boundary points from \(\overline U\),
		\[
		(x_1,x_3,\ldots,x_{2n-1})
		\qquad\text{and}\qquad
		(x_2,x_4,\ldots,x_{2n}),
		\]
		are properly unlinked with respect to \(F^*\).

		We claim that, for every \(f\in F^*\), the image \(f(U)\) is contained in a single connected component of \(V\). Indeed, when \(n=1\) the claim is trivial. Assume therefore that \(n\ge2\), and fix \(f\in F^*\). Since 
		\[ \fix(f)\cap\{x_1,\ldots,x_{2n}\}=\emptyset, \] 
		and since the cyclically ordered \(n\)-tuple \((x_1,x_3,\ldots,x_{2n-1})\) is properly unlinked with respect to \(F^*\), Lemma~\ref{l.properly_unlinked} yields an index \(p\in\{1,\ldots,n\}\) such that 
		\[ f(x_{2j-1})\in [x_{2p-1},x_{2p+1}] \qquad\text{for every }j\in\{1,\ldots,n\}. \] 
		Now each point \(x_{2j-1}\) is an endpoint of a connected component of \(U\), hence 
		\[ f(x_{2j-1})\in [x_{2p-1},x_{2p+1}]\cap \overline{V} =[x_{2p},x_{2p+1}] \qquad\text{for every }j\in\{1,\ldots,n\}. \] 
		Similarly, the cyclically ordered \(n\)-tuple \((x_2,x_4,\ldots,x_{2n})\) is properly unlinked with respect to \(F^*\). Applying Lemma~\ref{l.properly_unlinked} once more, we find an index \(q\in\{1,\ldots,n\}\) such that 
		\[ f(x_{2j})\in [x_{2q},x_{2q+2}]\cap \overline{V} =[x_{2q},x_{2q+1}] \qquad\text{for every }j\in\{1,\ldots,n\}. \] 
		Since \(f\) is orientation-preserving, the relative circular order of the points \(\{x_1,\ldots,x_{2n}\}\) is preserved. Hence one of the odd points \(f(x_{2j-1})\) must also lie in \([x_{2q},x_{2q+1}]\), which forces 
		\[ [x_{2p},x_{2p+1}]=[x_{2q},x_{2q+1}]. \] 
		Therefore 
		\[ f(x_i)\in [x_{2q},x_{2q+1}] \qquad\text{for every }i\in\{1,\ldots,2n\}, \] 
		and consequently 
		\[ f(U)\subset V\cap [x_{2q},x_{2q+1}]=J_{q}. \] 
		This proves the claim. We now complete the proof of lemma~\ref{l.mobius_ping-pong}. As in lemma~\ref{l.fixed_ping-pong}, let
		\[
		\tilde h=g_k f_k\cdots g_1 f_1,
		\qquad
		f_i\in F^*,\ g_i\in G^*,
		\]
		be a conjugate of \(h\) written as a cyclically reduced alternating word. By the claim,
		\(f_1(U)\) is contained in a single connected component of \(V\), and therefore
		\(\tilde h(U)\) is contained in a single connected component \(I\) of \(U\). Hence every
		other connected component of \(U\) contains no fixed point of \(\tilde h\). By the argument
		above, we obtain
		\[
		\#\bigl(\fix(\tilde h)\cap \overline U\bigr)\le 1.
		\]
		
		To obtain the corresponding bound on \(\overline V\), we apply the same reasoning to
		\(\tilde h^{-1}\). Since the reduced expression of \(\tilde h^{-1}\) ends with \(f_1^{-1}\),
		the claim gives that \(f_1^{-1}(U)\) is contained in a single connected component of \(V\).
		Hence \(\tilde h^{-1}(V)\) is contained in a single connected component of \(V\), and
		therefore
		\[
		\#\bigl(\fix(\tilde h)\cap \overline V\bigr)\le 1.
		\]
		
		Since \(\T=\overline{U\cup V}\), it follows that
		\[
		\#\fix(h)=\#\fix(\tilde h)
		=\#\bigl(\fix(\tilde h)\cap (\overline U\cup \overline V)\bigr)\le 2.
		\]
		
		Moreover, \(\tilde h\) always has at least one fixed point. Indeed, by the claim, \(\tilde h(U)\) is contained in a single connected component \(I\) of \(U\), hence \(\tilde h(I)\subset I\). Therefore the nested family 
		\[ \tilde h^n(\overline I)\subset \overline I \] 
		has nonempty intersection, which consists of a fixed point of \(\tilde h\). 
		
		Now let \(x\in\fix(\tilde h)\). If \(x\in U^\circ\), then \(x\) is attracting; if \(x\in V^\circ\), then \(x\) is repelling. Thus any parabolic fixed point of \(\tilde h\) must lie in \(\partial U=\partial V\). In particular, a parabolic fixed point belongs to both \(\overline U\) and \(\overline V\). Since 
		\[ \#\bigl(\fix(\tilde h)\cap \overline U\bigr)\le 1 \qquad\text{and}\qquad \#\bigl(\fix(\tilde h)\cap \overline V\bigr)\le 1, \] 
		there can be at most one parabolic fixed point. Moreover, a parabolic fixed point cannot coexist with any other fixed point, for otherwise either \(\overline U\) or \(\overline V\) would contain at least two fixed points. Therefore \(\tilde h\) has either a unique fixed point, which is parabolic, or exactly two fixed points, one attracting and one repelling. In both cases \(\tilde h\) is M\"obius-like.
	\end{proof}
	
	\begin{theorem}\label{t.mobius_like_trivial_stabilizer}
		Let \(F,G\leq \homeo_+(\T)\) be countable M\"obius-like subgroups containing no element
		with irrational rotation number, and let
		\[
		\bar x=(x_1,\ldots,x_n),\qquad \bar y=(y_1,\ldots,y_n)
		\]
		be cyclically ordered \(n\)-tuples of points in \(\T\) such that, for all distinct
		\(i,j\in\{1,\ldots,n\}\),
		\begin{enumerate}[\rm i.]
			\item \(x_i\notin F.x_j\) and \(y_i\notin G.y_j\);
			\item \(\stab_F(x_i)=\stab_G(y_i)=\{\mathrm{id}\}\);
			\item \((x_1,\ldots,x_n)\) is properly unlinked with respect to
			\(F\smallsetminus\{\mathrm{id}\}\);
			\item \(|F|>2\) or \(|G|>2\).
		\end{enumerate}
		Let \(\sigma\) be any cyclic order-preserving permutation of \(\{1,\ldots,n\}\). Then the
		minimal amalgamated product
		\[
		H=(F,\bar x)\star_{\sigma}(G,\bar y)
		\]
		is M\"obius-like.
	\end{theorem}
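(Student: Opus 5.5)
The plan is to split the elements of \(H\) into two classes, using the normal form in \(H\cong F*G\) provided by Theorem~\ref{t.action_product}.

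\textbf{Elements not conjugate into a factor.} Let \(\Gamma_F=\Psi(F)\) and \(\Gamma_G=\Psi(G)\). By item~\ref{vi.action_product} of Definition~\ref{d.action_product}, the pair
\[
U=\bigcup_i h_F^{-1}(x_i)^\circ,\qquad V=\bigcup_i h_G^{-1}(y_{\sigma(i)})^\circ
\]
is a proper ping-pong partition with \(S\) trivial, and \(H\) acts minimally. So Lemma~\ref{l.mobius_ping-pong} applies once we check one hypothesis: the collection of components of \(U\) must be properly unlinked with respect to \(\Gamma_F\smallsetminus\{\idid\}\).

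To check this, take points \(p_i\in h_F^{-1}(x_i)\). Since \(h_F\) is monotone, degree one, and semi-conjugates \(\Gamma_F\) to \(F\), it carries cyclic order weakly to cyclic order. Unlinkedness of \(\{p_i,p_j\}\) under \(\Psi(f)\) then comes from unlinkedness of \(\{x_i,x_j\}\) under \(f\), by pulling back through the intervals \(h_F^{-1}(\cdot)\). The degenerate cases, where \(f(x_i)=x_j\) or similar, cannot occur, because \(x_i\notin F.x_j\) and the stabilizers are trivial.

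Condition (ii) of Definition~\ref{d.unlinked} needs more care. If \(\Psi(f)\) fixes no \(p_i\), then \(f\neq\idid\), so \(f\) fixes no \(x_i\). Hypothesis (iii) then gives a consecutive pair with \(\{f(x_i),f(x_{i+1})\}\subset[x_i,x_{i+1}]\). The endpoints \(f(x_i)\neq x_i\), since the stabilizer is trivial. Hence \(\Psi(f)(p_i)\) lies in the interval \(h_F^{-1}(f(x_i))\), which is disjoint from the \(I_{x_j}\), and therefore lies in \([p_i,p_{i+1}]\).

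Lemma~\ref{l.mobius_ping-pong} then gives that every element not conjugate into \(\Gamma_F\cup\Gamma_G\) is M\"obius-like.

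\textbf{Elements conjugate into a factor.} It suffices to treat \(\Psi(f)\) with \(f\in F\smallsetminus\{\idid\}\); the \(G\) side is symmetric. Since the stabilizers are trivial, no nontrivial \(f\) fixes a point of \(\bigcup_i F.x_i\). Thus \(\fix(f)\) is disjoint from the blown-up orbits, and \(h_F\) restricts to a bijection \(\fix(\Psi(f))\to\fix(f)\). I will argue by type of \(f\).

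If \(f\) is elliptic of finite order, then \(\Psi(f)\) has the same finite order and is therefore conjugate to a rotation.

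If \(f\) has one or two fixed points, then \(\Psi(f)\) has the same number of fixed points. On each complementary arc, the direction of motion is preserved, because \(h_F\) is monotone and equivariant. A homeomorphism with the same fixed-point count and the same dynamical signs (two hyperbolic points, or one parabolic point) is conjugate to the corresponding M\"obius map.

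If \(f\) is fixed-point free, the hypothesis excludes irrational rotation number. So \(f\) has rational rotation number \(p/q\), and being M\"obius-like it is conjugate to a rational rotation, so \(f^q=\idid\). Hence \(\Psi(f)^q=\idid\), and by the previous case \(\Psi(f)\) is conjugate to a rotation.

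\textbf{Main obstacle.} The delicate step is the transfer of proper unlinkedness through the blow-up. One must verify that the condition in Lemma~\ref{l.mobius_ping-pong} for \(U\), which is stated for arbitrary points in the components, follows from the condition on the single tuple \(\bar x\). The key input for this is that images of the intervals \(I_{x_i}\) under \(\Gamma_F\smallsetminus\{\idid\}\) are whole intervals \(I_\xi\) disjoint from all the \(I_{x_j}\).

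A second point to watch is the irrational-rotation exclusion. It is exactly what prevents a Denjoy-type \(\Psi(f)\), which would not be conjugate to a rotation.
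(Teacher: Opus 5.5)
Your proof is correct and follows the paper's argument: Lemma~\ref{l.mobius_ping-pong} handles the elements not conjugate into a factor, and a type-by-type analysis of blow-ups handles the rest, with finite order forced by the exclusion of irrational rotation numbers. You also explicitly check that hypothesis (iii) transfers to proper unlinkedness of the components of \(U\) with respect to \(\Psi(F)\smallsetminus\{\mathrm{id}\}\), which fills in a step the paper uses without verification, and your observation \(\Psi(f)^q=\Psi(f^q)=\mathrm{id}\) reaches finite order more directly than the paper's stabilizer argument.
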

	
	\begin{proof}
		Since the stabilizers are trivial, the amalgamated product is well defined, and by
		Theorem~\ref{t.action_product} it yields a minimal action \(H=(F,\bar x)\star_{\sigma}(G,\bar y)\) on \(\T\).
		
		Let \((U,V)\) be the proper ping-pong partition associated to this action. By
		Lemma~\ref{l.mobius_ping-pong}, every element of \(H\) which is not conjugate into
		\(\Psi(F)\cup\Psi(G)\) is M\"obius-like.
		
		It remains to consider the elements which are conjugate into \(\Psi(F)\cup\Psi(G)\).
		Since \(F\) and \(G\) are M\"obius-like, it is enough to understand how the blow-up affects
		the dynamics of elements of \(F\) and \(G\). Observe that no nontrivial element fixes any of
		the blown-up points \(x_i\) or \(y_i\). Therefore, if such an element has fixed points, then the blow-up does not change either the number of fixed points or their dynamical type: hyperbolic fixed points remain hyperbolic, and parabolic fixed points remain parabolic. Hence every element of
		\(\Psi(F)\cup\Psi(G)\) with fixed points is still M\"obius-like.
		
		Thus the only possible obstruction comes from fixed-point-free elements of
		\(\Psi(F)\cup\Psi(G)\), that is, from blow-ups of rotations. Let \(\gamma\in \Psi(F)\cup\Psi(G)\) be fixed-point-free, and let \(r\in F\cup G\) be its
		projection. Since \(r\) is fixed-point-free and M\"obius-like, it is topologically conjugate to a
		rotation. By hypothesis, \(r\) has no irrational rotation number, so its rotation number is
		rational, say
		\[
		\rho(r)=\frac pq,\qquad \gcd(p,q)=1.
		\]
		Therefore \(r\) is topologically conjugate to the rigid rotation \(R_{p/q}\), and in particular \(r^q=\mathrm{id}\).
		
		Now \(\gamma\) is a blow-up of \(r\). Since \(r^q=\mathrm{id}\), the element \(\gamma^q\)
		projects to the identity. If \(\gamma^q\neq \mathrm{id}\), then \(\gamma^q\) acts
		nontrivially on some blown-up interval, which implies that the corresponding point of
		\(\T\) has nontrivial stabilizer for the action of \(F\) or \(G\). This contradicts the
		assumption that all stabilizers are trivial. Therefore
		\[
		\gamma^q=\mathrm{id}.
		\]
		Hence \(\gamma\) is an orientation-preserving finite-order homeomorphism of the circle, and
		therefore it is topologically conjugate to a rational rotation. In particular, \(\gamma\)
		is M\"obius-like.
		
		We conclude that every element of \(H\) is M\"obius-like. Therefore the action of \((F,\bar x)\star_{\sigma}(G,\bar y)\) is M\"obius-like.
	\end{proof}
	
	\begin{remark}
		The converse also holds: if the minimal amalgamated product \((F,\bar x)\star_{\sigma}(G,\bar y)\) is M\"obius-like, then \(F\) and \(G\) are M\"obius-like and contain no element with
		irrational rotation number.
	\end{remark}
	
	\begin{proof}
		If \(F\) or \(G\) were not M\"obius-like, then the corresponding subgroup
		\(\Psi(F)\) or \(\Psi(G)\) in the amalgamated product would also contain a non
		M\"obius-like element, contradicting the fact that the whole action is M\"obius-like.
		
		Likewise, if \(F\) or \(G\) contained an element with irrational rotation number, then
		its blow-up in \(\Psi(F)\) or \(\Psi(G)\) would still have irrational rotation number.
		Since \(\Psi(F)\) and \(\Psi(G)\) preserve a proper ping-pong partition, such an element
		cannot be topologically conjugate to an irrational rotation; it is a Denjoy-type example,
		hence not M\"obius-like; see, for instance,	\cite{ghys-circle, Kim-Koberda-Denjoy}. This again contradicts the hypothesis.
		
		Therefore both \(F\) and \(G\) are M\"obius-like and contain no element with irrational
		rotation number.
	\end{proof}
	
	We now turn to the case of singleton blow-ups with nontrivial stabilizers. In this setting,
	the M\"obius-like character of the amalgamated product depends on the way the stabilizers
	are identified. We begin with a lemma describing when the stabilizer subgroup produces
	bi-parabolic elements, and then derive the corresponding characterization theorem.
	
	\begin{lemma}\label{l.singleton_stabilizer_biparabolic}
		With notation as in Subsection~\ref{d.action_product}, let \(H=(F,\{x\})\star_{\theta}(G,\{y\})\) be an amalgamated product of circle actions, and assume that
		\(S_F=\stab_F(x)\) and \(S_G=\stab_G(y)\) are nontrivial.
		Then an element \(\gamma_s\in \Psi(S_F)=\Psi(S_G)\) is bi-parabolic if and only if
		\(\theta\) preserves the order of \(s\in S_F\).
		
		In particular, \(\Psi(S_F)\) contains a bi-parabolic element if and only if
		\(\theta\) is not order-inverting.
	\end{lemma}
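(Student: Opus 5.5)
The plan is to read off the local dynamics of \(\gamma_s:=\Psi(s)\) at the two boundary points of the ping-pong partition. With \(n=1\) the construction in Theorem~\ref{t.action_product} gives a partition \(\T=[a,b]\cup[b,a]\) with \(h_F^{-1}(x)=[a,b]=I_x\) and \(h_G^{-1}(y)=[b,a]=J_y\). By item~\ref{vi.action_product}, every \(s\in S_F\) gives an element \(\gamma_s\) that preserves both pieces, so \(a,b\in\fix(\gamma_s)\). Throughout I interpret ``\(\theta\) preserves the order of \(s\)'' as a statement about displacement directions. Write \(\epsilon(s)\in\{+,-\}\) for the direction in which \(s\) moves points of \(\T\smallsetminus\{x\}\) near \(x\); this sign is constant on \(\T\smallsetminus\fix(s)\), and when \(\fix(s)=\{x\}\) it is the same on both sides of \(x\). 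Then ``\(\theta\) preserves the order of \(s\)'' means \(\epsilon(\theta(s))=\epsilon(s)\). If \(s\) or \(\theta(s)\) has further fixed points, \(\gamma_s\) acquires extra fixed points in the interiors of the two pieces, and the equivalence should then be stated for one-sided germs; I would first treat the case \(\fix(s)=\{x\}\), \(\fix(\theta(s))=\{y\}\), which is the setting of Theorem~\ref{t.main_examples_2n}.

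\textbf{Step 1 (identify the germs at \(a\) and \(b\)).} On \([b,a]\), the map \(h_F\) is monotone onto \(\T\), sends \(b\) and \(a\) to \(x\), and satisfies \(h_F\gamma_s=sh_F\). Hence the left germ of \(\gamma_s\) at \(a\) is semi-conjugate to the germ of \(s\) on the left of \(x\), and the right germ of \(\gamma_s\) at \(b\) is semi-conjugate to the germ of \(s\) on the right of \(x\). Symmetrically, \(h_G\) maps \([a,b]\) onto \(\T\) and intertwines \(\gamma_s\) with \(\theta(s)\), using \(\Psi(s)=\Psi(\theta(s))\) from the stabilizer step of the construction. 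So the right germ of \(\gamma_s\) at \(a\) is governed by \(\theta(s)\) on the right of \(y\), and the left germ at \(b\) by \(\theta(s)\) on the left of \(y\). Because these semi-conjugacies are monotone, displacement directions are preserved. One must also check that the collapsed intervals inside each piece do not create further fixed points near \(a\) or \(b\). This holds because such intervals are permuted by \(\gamma_s\) according to how \(s\) and \(\theta(s)\) permute orbit points, and no orbit point other than \(x\) (resp.\ \(y\)) is fixed.

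\textbf{Step 2 (classify the fixed points).} Under the standing assumption, \(\fix(\gamma_s)=\{a,b\}\). At \(a\), points on the left move in direction \(\epsilon(s)\) and points on the right in direction \(\epsilon(\theta(s))\). If the two signs agree, \(a\) is parabolic, and by Step 1 so is \(b\), since its two sides carry the same pair of signs. Thus \(\gamma_s\) is bi-parabolic. If the signs disagree, then \(a\) is attracting or repelling and \(b\) is the opposite, so \(\gamma_s\) is hyperbolic-like. This gives the equivalence for each \(s\).

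\textbf{Step 3 (the ``in particular'').} Say that \(\theta\) is order-inverting if \(\epsilon(\theta(s))=-\epsilon(s)\) for every nontrivial \(s\). If \(\theta\) is not order-inverting, some nontrivial \(s\) has \(\epsilon(\theta(s))=\epsilon(s)\), and Step 2 makes \(\gamma_s\) bi-parabolic. Conversely, if \(\theta\) is order-inverting, every nontrivial \(\gamma_s\) is hyperbolic-like, and \(\gamma_{\mathrm{id}}\) is not bi-parabolic, so \(\Psi(S_F)\) contains no bi-parabolic element.

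The main obstacle is Step 1. One has to make rigorous that the germ of \(\gamma_s\) at \(a^{\pm}\) genuinely inherits the displacement sign of \(s\) or \(\theta(s)\), despite the Cantor-like arrangement of inserted intervals accumulating at \(a\). I would handle this by choosing points of \(\widetilde X_F\) (resp.\ \(\widetilde X_G\)) arbitrarily close to \(a\), where \(\gamma_s\) equals \(h_F^{-1}sh_F\) exactly. Monotonicity then propagates the sign to the gaps between those points. I also need to fix carefully which side of \(x\) corresponds to which side of \(a\).
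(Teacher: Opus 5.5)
Your proposal is correct and follows essentially the same route as the paper. The paper also reads off the sign of \(\gamma_s(z)-z\) on \(h_G^{-1}(y)\) from \(\gamma_s=h_F^{-1}sh_F\), and on \(h_F^{-1}(x)\) from \(\gamma_s=h_G^{-1}\theta(s)h_G\). It concludes that the only fixed points are the two common boundary points, and that both are parabolic exactly when the two signs agree. Like you, it relies on the unstated hypothesis that nontrivial elements of \(S_F\) and \(S_G\) fix only \(x\) and \(y\); you make that hypothesis explicit.

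The obstacle you flag in Step 1 is lighter than you expect. For \(z\) in the interior of \(h_G^{-1}(y)\) one has \(h_F(z)\neq x\), so \(h_F(\gamma_s(z))=s(h_F(z))\neq h_F(z)\). Monotonicity of \(h_F\) on that piece then transfers the sign directly, without tracking the inserted intervals.
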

	
	\begin{proof}
		The associated proper ping-pong partition has the form
		\[
		\bigl(h_F^{-1}(x),\,h_G^{-1}(y)\bigr).
		\]
		Moreover, every nontrivial element of \(S_F\) fixes \(x\) and no other point, and
		similarly every nontrivial element of \(S_G\) fixes \(y\) and no other point. Hence the
		actions of \(S_F\) on \(\T\smallsetminus\{x\}\) and of \(S_G\) on
		\(\T\smallsetminus\{y\}\) are fixed-point-free.
		
		Let \(s\in S_F\), and denote by \(\gamma_s\in \Psi(S_F)=\Psi(S_G)\) the corresponding
		element in the amalgamated product. Observe that the sign of \(\gamma_s(z)-z\) is constant on \(\core(h_F)\) and extends to the whole interval \(h_G^{-1}(y)\); similarly, the sign of \(\gamma_s(z)-z\) is constant on \(\core(h_G)\) and extends to the whole interval \(h_F^{-1}(x)\).
		
		On \(\core(h_F)\), the action of \(\gamma_s\) is given by
		\[
		\gamma_s=h_F^{-1}\,s\,h_F,
		\]
		while on \(\core(h_G)\) it is given by
		\[
		\gamma_s=h_G^{-1}\,\theta(s)\,h_G.
		\]
		Therefore, the signs of \(\gamma_s(z)-z\) on \(h_G^{-1}(y)\) and \(h_F^{-1}(x)\) are
		opposite if and only if \(\theta\) is order-inverting.
		
		It follows that \(\gamma_s\) fixes exactly the two boundary points of the partition, and
		both fixed points are parabolic if and only if the signs on the two sides agree, that is,
		if and only if \(\theta\) preserves the order of \(s\).
		
		Thus \(\gamma_s\) is bi-parabolic if and only if \(\theta\) preserves the order of \(s\).
		In particular, \(\Psi(S_F)=\Psi(S_G)\) contains a bi-parabolic element if and only if
		\(\theta\) is not order-inverting.
	\end{proof}
	
	\begin{theorem}\label{t.mobius_like_singleton_stabilizer}
		Let \(F,G\leq \homeo_+(\T)\) be countable M\"obius-like subgroups containing no element
		with irrational rotation number, and let \(x,y\in\T\). Set
		\[
		S_F:=\stab_F(x),\qquad S_G:=\stab_G(y).
		\]
		Assume that:
		\begin{enumerate}[\rm i.]
			\item \(\fix(s_f)=\{x\}\) for every \(s_f\in S_F\), and \(\fix(s_g)=\{y\}\) for every
			\(s_g\in S_G\);
			\item \(S_F\lneqq F\), \(S_G\lneqq G\), and \([F:S_F]>2\);
			\item there exists an isomorphism \(\theta:S_F\xrightarrow{\sim} S_G\).
		\end{enumerate}
		Then the minimal amalgamated product
		\[
		H=(F,x)\star_{\theta}(G,y)
		\]
		is M\"obius-like if and only if \(\theta\) is order-inverting.
	\end{theorem}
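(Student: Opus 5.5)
The proof splits along the two directions of the equivalence. Both rest on Lemma~\ref{l.singleton_stabilizer_biparabolic} and Lemma~\ref{l.mobius_ping-pong}. Throughout, the minimal action \(H=(F,x)\star_\theta(G,y)\) is the one provided by Theorem~\ref{t.action_product}; hypothesis ii supplies the index condition it needs. The associated proper ping-pong partition is \(\bigl(h_F^{-1}(x)^\circ,h_G^{-1}(y)^\circ\bigr)\), and each side is a single interval. In particular \(U\) is a one-component collection, which is trivially properly unlinked, as observed right after Definition~\ref{d.unlinked}.

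\emph{Necessity.} Suppose \(\theta\) is not order-inverting, so that some \(s\in S_F\) has \(\theta\) preserving its order. Lemma~\ref{l.singleton_stabilizer_biparabolic} then makes \(\gamma_s\) bi-parabolic. A bi-parabolic homeomorphism is not conjugate to any element of \(\psl(2,\RR)\), so \(H\) is not M\"obius-like.

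\emph{Sufficiency.} Assume \(\theta\) is order-inverting. By Lemma~\ref{l.mobius_ping-pong}, with \(n=1\) and the unlinked hypothesis automatic, every element not conjugate into \(\Psi(F)\cup\Psi(G)\) is M\"obius-like. It therefore suffices to treat elements of \(\Psi(F)\cup\Psi(G)\), and we split them into two classes.

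\textbf{(a) Elements of \(\Psi(S_F)=\Psi(S_G)\).} By hypothesis i and the sign discussion in the proof of Lemma~\ref{l.singleton_stabilizer_biparabolic}, a nontrivial \(\gamma_s\) fixes exactly the two points of \(\partial h_F^{-1}(x)=\partial h_G^{-1}(y)\). Order-inversion means the displacement has opposite signs on the two complementary arcs, so \(\gamma_s\) has one attracting and one repelling fixed point. This is the dynamics of a hyperbolic element, so \(\gamma_s\) is M\"obius-like.

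\textbf{(b) Elements of \(\Psi(F)\smallsetminus\Psi(S_F)\), and similarly for \(G\).} Such an element is a blow-up of \(f\notin S_F\) along the orbit \(F.x\), with intervals inserted at points that \(f\) does not fix. Hence the fixed points of \(f\), together with their attracting, repelling or parabolic type, are unchanged. Inserting intervals inside the open arcs between consecutive fixed points does not alter the topological type, so elements with fixed points remain M\"obius-like.

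For fixed-point-free \(f\) we argue as in the proof of Theorem~\ref{t.mobius_like_trivial_stabilizer}. The rotation number is rational, so \(f^q=\mathrm{id}\) and hence \(\gamma^q\in\Psi(S_F)\), because \(\gamma^q\) projects to the identity in \(F\) and \(\Psi\) is injective on the free product. Suppose \(\gamma^q\neq\mathrm{id}\). By (a), \(\gamma^q\) has fixed points, while \(\gamma\) has none, since \(\gamma\) and \(f\) have the same rotation number \(p/q\) with \(q\geq 2\). We then need to rule out a homeomorphism of rotation number \(p/q\) whose \(q\)-th power has fixed points yet is nontrivial. I expect this to be excluded because a nontrivial element of \(S_F\) fixes only \(x\), so its \(q\)-th roots would fix \(x\) as well. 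Concretely, \(f^q\in S_F\) with \(f^q=\mathrm{id}\) in \(F\) forces \(\gamma^q=\Psi(f^q)=\Psi(\mathrm{id})=\mathrm{id}\). So \(\gamma\) has finite order and is conjugate to a rotation.

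Conjugation preserves the M\"obius-like property, so every element of \(H\) is M\"obius-like. The main obstacle is step (b): one must check carefully that the blow-up preserves the topological type of fixed points for elements outside the stabilizer, and in particular that a parabolic fixed point of \(f\) does not become split or converted.
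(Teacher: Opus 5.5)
Your proposal is correct and follows the paper's proof essentially step for step. Necessity comes from the bi-parabolic elements of Lemma~\ref{l.singleton_stabilizer_biparabolic}; sufficiency comes from Lemma~\ref{l.mobius_ping-pong} for elements not conjugate into the factors, the sign analysis on \(\Psi(S_F)\), and the blow-up argument of Theorem~\ref{t.mobius_like_trivial_stabilizer} for the remaining factor elements (your identity \(\gamma^q=\Psi(f^q)=\mathrm{id}\) gets finite order more cleanly than the paper's trivial-stabilizer argument, which does not literally apply here). One slip in (b), which the paper's proof shares: an element \(f=gsg^{-1}\) with \(g\in F\smallsetminus S_F\) and \(s\in S_F\) nontrivial lies outside \(S_F\) but fixes the blown-up point \(g(x)\), and its parabolic fixed point splits into the two endpoints of \(h_F^{-1}(g(x))\), so your ``fixed points unchanged'' argument does not cover it; instead conjugate it in \(H\) into \(\Psi(S_F)\) and apply (a).
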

	
	\begin{proof}
		If \(S_F=S_G=\{\mathrm{id}\}\), then \(\theta\) is vacuously order-inverting, and the
		conclusion follows from Theorem~\ref{t.mobius_like_trivial_stabilizer}. We may therefore
		assume that the stabilizers are nontrivial.
		
		Let \(H=(F,x)\star_{\theta}(G,y)\), with \(\Psi\) as in Definition~\ref{d.action_product}. Since the singleton \((x)\) is automatically properly unlinked, Lemma~\ref{l.mobius_ping-pong} implies that every element of \(H\) which is not conjugate into \(\Psi(F)\cup\Psi(G)\) is M\"obius-like.
		
		For elements of \(\Psi(F)\cup\Psi(G)\) outside \(\Psi(S_F)=\Psi(S_G)\), the same argument as in
		the proof of Theorem~\ref{t.mobius_like_trivial_stabilizer} shows that they are M\"obius-like.
		
		Thus it only remains to consider the subgroup \(\Psi(S_F)=\Psi(S_G)\). These elements fix exactly the two endpoints of the proper ping-pong partition and, by Lemma~\ref{l.singleton_stabilizer_biparabolic}, this subgroup contains a bi-parabolic element if and only if \(\theta\) is not order-inverting. Since bi-parabolic elements are the only orientation-preserving homeomorphism with 2 fixed points which are not topologically conjugate to \(\psl(2,\RR)\), it follows that \(H\) is M\"obius-like if and only if \(\theta\) is order-inverting.
	\end{proof}
	
		\begin{remark}
			Theorem~\ref{t.mobius_like_singleton_stabilizer} raises the problem of deciding for which
			stabilizers \(S_F\) and \(S_G\) there exists an order-inverting isomorphism
			\(\theta:S_F\xrightarrow{\sim} S_G\). As seen in the proof of
			Lemma~\ref{l.singleton_stabilizer_biparabolic}, this forces \(S_F\) and \(S_G\) to act on
			\(\RR\) without fixed points. Hence, by Hölder's theorem, every subgroup of \(\homeo_+(\RR)\) acting freely on \(\RR\) is abelian	and semi-conjugate to a subgroup of \(\mathrm{Isom}_+(\RR)\); see, for instance, \cite[Section~2]{ghys-circle} or \cite[Chapter~2]{navas-book}. See also \cite{carnevale2022groups} for a related extension.
			
			Thus, if \(S_F\cong A_F\subset \RR\) and \(S_G\cong A_G\subset \RR\), then every
			isomorphism \(\theta:S_F\to S_G\) induces an isomorphism \(\alpha_\theta:A_F\to A_G\), and
			\(\theta\) is order-inverting if and only if \(\alpha_\theta\) is. Since there is a natural
			bijection between order-preserving and order-inverting isomorphisms, the existence of an
			order-inverting \(\theta\) is equivalent to the existence of an order-preserving one.
			Accordingly, finding stabilizers for which the amalgamated product can be M\"obius-like
			amounts to finding two abelian subgroups of \(\RR\) which are isomorphic by an
			order-preserving isomorphism.
		\end{remark}

	\subsection{Conditions for being non-conjugate into $\psl^{(k)}(2,\RR)$}
	
	We now discuss sufficient conditions ensuring that an amalgamated product of circle
	actions is not topologically conjugate to a subgroup of \(\psl^{(k)}(2,\RR)\). By the
	convergence group theorem, this amounts to showing that the resulting action fails the
	convergence property. We will treat two mechanisms separately. The first is based on the
	presence of a nondiscrete factor, and yields a broad obstruction. The second is adapted to
	the finitely generated setting, where nondiscreteness is too restrictive, and is based
	instead on conical limit points in finite lifts of M\"obius actions.
	
	For convenience, we recall the following theorem.
	
	\begin{theorem*}[Tukia--Gabai--Casson--Jungreis]
		A subgroup \(G\leq \homeo_+(\T)\) is topologically conjugate to a subgroup of
		\(\psl(2,\RR)\) if and only if \(G\) is a convergence group. Equivalently, every
		sequence \(\{g_n\}\subset G\) admits a subsequence \(\{g_{n_k}\}\) such that one of the
		following alternatives holds:
		\begin{enumerate}[\rm a.]
			\item\label{a.convergence_group} there exist points \(x,y\in\T\) such that
			\[
			g_{n_k}\to y \quad\text{pointwise on }\T\smallsetminus\{x\},
			\qquad
			g_{n_k}^{-1}\to x \quad\text{pointwise on }\T\smallsetminus\{y\};
			\]
			\item\label{b.convergence_group} there exists \(g\in \homeo_+(\T)\) such that
			\[
			g_{n_k}\to g \quad\text{pointwise on }\T,
			\qquad
			g_{n_k}^{-1}\to g^{-1} \quad\text{pointwise on }\T.
			\]
		\end{enumerate}
	\end{theorem*}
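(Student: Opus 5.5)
The statement has two directions, and I would treat them very differently. The \emph{forward direction} (conjugate into \(\psl(2,\RR)\) implies convergence) I would prove directly. The \emph{converse} I would reduce to the deep results of \cite{Tukia,Gabai,Casson-Jungreis}, giving only the architecture of the argument.

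For the forward direction, I first note that alternatives a and b are preserved under topological conjugacy. Conjugating by a homeomorphism \(\phi\) maps pointwise limits to pointwise limits and attracting/repelling pairs \((x,y)\) to \((\phi(x),\phi(y))\). So it suffices to treat \(G\le\psl(2,\RR)\) acting on \(\mathbb{RP}^1\). Given \(g_n\in G\), I use the Cartan decomposition \(g_n=k_n a_n k_n'\) with \(k_n,k_n'\in\SO(2)\) and \(a_n=\mathrm{diag}(\lambda_n,\lambda_n^{-1})\), \(\lambda_n\ge1\). By compactness of \(\SO(2)\), I pass to a subsequence with \(k_n\to k\) and \(k_n'\to k'\), and then split into two cases.
\begin{itemize}
\item If \(\lambda_n\) stays bounded, a further subsequence gives \(g_n\to g\in\psl(2,\RR)\) in the Lie group topology. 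This implies uniform convergence \(g_n\to g\) and \(g_n^{-1}\to g^{-1}\), which is alternative b.
\item If \(\lambda_n\to\infty\), then \(a_n\) converges to the attracting fixed point \([1:0]\) uniformly on compact subsets avoiding \([0:1]\), and \(a_n^{-1}\) does the reverse. Composing with the convergent rotations yields alternative a, with \(y=k[1:0]\) and \(x=k'^{-1}[0:1]\).
\end{itemize}

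For the converse, let \(G\le\homeo_+(\T)\) be a convergence group. I would first pass to the closure \(\overline G\) in \(\homeo_+(\T)\) and check that it is again a convergence group. This is a diagonal argument using that the convergence property is phrased by subsequences of pointwise limits. I then split into two cases.
\begin{itemize}
\item \emph{Nondiscrete case.} Here I would use Tukia's argument: a closed nondiscrete convergence group is a locally compact group acting effectively on a manifold, and the convergence property rules out small subgroups. So it is a Lie group of dimension at most \(3\). Inspecting the possible transitive or minimal Lie group actions on \(\T\) leaves \(\SO(2)\), \(\psl(2,\RR)\) or their closed subgroups up to conjugacy, and I conclude that \(G\) is conjugate into \(\psl(2,\RR)\).
\item \emph{Discrete case.} The convergence property is exactly the statement that \(G\) acts properly discontinuously on the space \(T\) of positively ordered distinct triples in \(\T\). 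The space \(T\) is an open solid torus, and \(T\) is the model of the unit tangent bundle of \(\mathbb{H}^2\).
\end{itemize}

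The main obstacle is the discrete case, which is the content of the Gabai and Casson--Jungreis theorems. After passing to a finite-index torsion-free subgroup when needed (with separate care for torsion), one must show that \(T/G\) is a Seifert fibred \(3\)-manifold. The strategy is then to recognize \(G\) as a Fuchsian group and build an equivariant homeomorphism from the circle at infinity. Equivalently, one shows that \(G\) is virtually a surface group, via the Seifert fibred space theorem, and then uses the boundary map of the hyperbolic structure. Within this, proving that \(\pi_1\) of the quotient has an infinite cyclic normal subgroup is the genuinely hard three-manifold step. I would not reprove it but invoke \cite{Gabai,Casson-Jungreis} at that point, together with \cite{Tukia} for the remaining finitely and infinitely generated reductions. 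Finally, I would assemble the two cases to conclude that \(G\) is topologically conjugate to a subgroup of \(\psl(2,\RR)\).
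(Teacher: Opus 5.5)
The paper does not prove this statement; it quotes it from Tukia, Gabai and Casson--Jungreis. Proving only the forward direction and citing the converse therefore puts you on the same footing as the paper, and your forward direction is correct. Write $g_n=k_na_nk_n'$ with $k_n\to k$, $k_n'\to k'$. If $\lambda_n$ stays bounded, $g_n$ converges in $\psl(2,\RR)$, so $g_n^{\pm1}$ converge uniformly and alternative b holds. If $\lambda_n\to\infty$, alternative a holds with exactly $y=k[1:0]$ and $x=k'^{-1}[0:1]$.

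You should add one detail before invoking the literature. The cited works use locally uniform convergence, while the statement uses pointwise convergence. On the circle the two agree. For b this follows from monotonicity of lifts. For a, suppose $g_n(K)$ did not shrink to $y$ for some compact arc $K\subset\T\smallsetminus\{x\}$. Since the endpoints of $g_n(K)$ tend to $y$, $g_n(K)$ would eventually contain any fixed $w\neq y$. This forces $g_n^{-1}(w)\in K$, contradicting $g_n^{-1}(w)\to x$.

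Your sketch of the converse, however, misdescribes the cited argument, and it would contain genuine errors if followed.

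\emph{The normal $\ZZ$ is not the hard step.} It comes for free: $T\cong\T\times\RR^2$ has $\pi_1(T)\cong\ZZ$. When $G$ acts freely and properly discontinuously, $T\to T/G$ is a regular covering, so $1\to\ZZ\to\pi_1(T/G)\to G\to 1$. The real difficulty is the reverse passage, from this structure to $G$ being Fuchsian.

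\emph{The Seifert fibred space route is circular.} In the closed non-Haken case, which you cannot exclude a priori, the Seifert fibred space theorem was obtained as a corollary of the convergence group theorem. Gabai and Casson--Jungreis derive the Fuchsian conclusion from the dynamics on $\T$ itself.

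\emph{Torsion-free subgroups are not available a priori.} Selberg's lemma needs linearity, which is what is being proved. Moreover, $G$ need not be finitely generated.

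\emph{An abstract isomorphism is not a conjugacy.} Knowing $G$ is isomorphic to a Fuchsian group does not give a topological conjugacy; compare $F_2$ as a Schottky group with $F_2$ as a thrice-punctured-sphere group. The equivariant boundary map needs extra input, such as cocompactness on $T$.

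\emph{The nondiscrete case needs the convergence property too.} The Lie-theoretic inspection also produces $\psl^{(k)}(2,\RR)$ for $k\geq 2$, as well as non-minimal closed subgroups such as one-parameter flows with fixed points. Ruling these out is exactly where the convergence property must be used.

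Since you ultimately defer to the citations, the logical content of your converse is the same as the paper's. But this architecture should be corrected rather than presented as the structure of the proof.
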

	
	The convergence group theorem is due to Tukia, Gabai, and Casson--Jungreis; see
	\cite{Tukia,Gabai,Casson-Jungreis}.
	
	We now revisit the setting of Theorem~\ref{t.main_examples_2n}, but impose the additional
	assumption that one of the factors acts nondiscretely on \(\T\). This allows us to produce
	sequences in the amalgamated product whose limit behavior is incompatible with the
	convergence property, and hence with topological conjugacy into \(\psl^{(k)}(2,\RR)\).
	
	\begin{theorem}\label{t.main_examples_non_psl}
		Let \(F,G\leq \homeo_+(\T)\) be countable subgroups, and let
		\[
		\bar x=(x_1,\ldots,x_n),\qquad \bar y=(y_1,\ldots,y_n)
		\]
		be cyclically ordered \(n\)-tuples of points in \(\T\). Assume that, for all distinct
		\(i,j\in\{1,\ldots,n\}\),
		\begin{enumerate}[\rm i.]
			\item \(x_j\notin F.x_i\) and \(y_j\notin G.y_i\);
			\item \(\stab_F(x_i)=S_F\) and \(\stab_G(y_i)=S_G\), where
			\(\theta:S_F\xrightarrow{\sim}S_G\) is a fixed isomorphism;
			\item\(\fix(s_f)=\{x_1,\ldots,x_n\}\) and \(\fix(s_g)=\{y_1,\ldots,y_n\}\) for every nontrivial \(s_f\in S_F\) and \(s_g\in S_G\);
			\item \(S_F\lneqq F\), \(S_G\lneqq G\), and at least one of the indices
			\([F:S_F]\) and \([G:S_G]\) is greater than \(2\).
		\end{enumerate}
		Let \(\sigma\) be any cyclic order-preserving permutation of \(\{1,\ldots,n\}\), and
		assume that one of the actions of \(F\) or \(G\) on \(\T\) is nondiscrete. Then every
		minimal amalgamated product
		\[
		H=(F,\bar x)\star_{\theta,\sigma}(G,\bar y)
		\]
		is not topologically conjugate to a subgroup of \(\psl^{(k)}(2,\RR)\), for any
		\(k\geq 1\).
	\end{theorem}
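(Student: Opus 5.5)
We argue by contradiction. Suppose \(H\) is conjugate to a subgroup of \(\psl^{(k)}(2,\RR)\) for some \(k\ge1\). The plan is to use the nondiscrete factor to build a sequence in \(H\) that violates the convergence property in the form adapted to finite lifts.

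\textbf{Step 1: convergence property for finite lifts.} Since \(\psl^{(k)}(2,\RR)\) is the \(k\)-fold cyclic cover of \(\psl(2,\RR)\) acting on the \(k\)-fold cover of \(\T\), a subgroup of it satisfies a \(k\)-fold convergence property. Every sequence \(\{h_m\}\) has a subsequence that either converges, together with its inverses, to a homeomorphism, or converges pointwise off \(k\) points to a set of at most \(k\) points. In the second case the limit points are permuted by the deck rotation.

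The consequence we need is a dichotomy for a sequence that stays \emph{bounded} on some nondegenerate arc, meaning the images of the arc do not shrink to points. Such a sequence cannot collapse that arc. Hence a subsequence converges to a homeomorphism. In particular, the sequence cannot converge to the identity on one open arc while being nonconstant in the limit elsewhere, unless it converges uniformly to a homeomorphism.

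\textbf{Step 2: constructing the sequence.} Say \(F\) acts nondiscretely. Since \(F\) is countable and its action is nondiscrete, there is a sequence \(f_m\in F\smallsetminus\{\idid\}\) with \(f_m\to\idid\) uniformly. Such an \(f_m\) can fix the points \(x_i\) only if \(f_m\in S_F\). Passing to a subsequence, either every \(f_m\) lies in \(F\smallsetminus S_F\), or every \(f_m\) lies in \(S_F\).

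In the first case, \(\Psi(f_m)\) maps each blown-up interval \(I_{x_i}=h_F^{-1}(x_i)\) onto an interval \(I_{\xi_m}\) with \(\xi_m=f_m(x_i)\to x_i\) and \(\xi_m\ne x_i\). Because the inserted intervals have summable lengths, \(|I_{\xi_m}|\to 0\). So \(\Psi(f_m)\) collapses the nondegenerate interval \(I_{x_i}\) toward the point \(a_i\) or \(b_i\).

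Meanwhile, on \(\core(h_F)\) minus the preimages of the orbit, \(\Psi(f_m)=h_F^{-1}f_mh_F\) converges to the identity. This convergence holds on a set containing points of every arc \(J_{y_{\sigma(i)}}\), namely the points of \(\T\smallsetminus\cI\). Thus \(\Psi(f_m)\) converges pointwise to a map that is the identity on an infinite set meeting many arcs, yet collapses each \(I_{x_i}\) to a point. The first part contradicts convergence to at most \(k\) points, because the limit is not constant off finitely many points. The second part contradicts convergence to a homeomorphism.

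To make this rigorous, choose \(k+1\) disjoint arcs of \(\T\smallsetminus\cI\) with nonempty interior. This is possible because \(\core(h_F)\) contains the nondegenerate intervals \(J_{y_j}\) minus a countable union of open intervals, and it has positive measure once the inserted lengths are chosen small. On these arcs \(\Psi(f_m)\) converges to the identity on a dense set, so no point of \(k\) limit points can absorb all of them. This rules out the collapsing alternative. Collapse of \(I_{x_i}\) rules out the homeomorphism alternative.

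In the second case, with \(f_m\in S_F\), the elements \(f_m\) fix each \(x_i\), and we use the stabilizer dynamics on \(K\). Instead, replace \(f_m\) by \(f_m f_0\) for a fixed \(f_0\in F\smallsetminus S_F\). Then \(f_mf_0\to f_0\), and \((f_mf_0)(x_i)\) is a point \(\ne f_0(x_i)\) except for finitely many \(m\), since \(\stab\) is \(S_F\) and \(f_mf_0(x_i)=f_0(x_i)\) would force \(f_0^{-1}f_mf_0\in S_F\). Passing to \(\Psi(f_0)^{-1}\Psi(f_mf_0)\) reduces this case to the previous one, modulo handling equal images.

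\textbf{Step 3: the case where \(G\) is nondiscrete.} This is symmetric.

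\textbf{Main obstacle.} The main obstacle is the careful limit argument. We must guarantee that \(\Psi(f_m)\) genuinely shrinks \(I_{x_i}\), which needs \(\xi_m\) distinct and the interval lengths tending to \(0\); in the minimal collapsed model this follows from summability of the gaps. We must also formulate the \(k\)-lifted convergence dichotomy precisely. I would first try to pull back to the \(k\)-fold cover, where the lifted group is a genuine convergence group by Tukia--Gabai--Casson--Jungreis, and run the argument on lifted arcs.
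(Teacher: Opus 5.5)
Your route is essentially the paper's. You take \(f_m\to\idid\) in the nondiscrete factor and arrange \(f_m\notin S_F\) by conjugating with a fixed \(f_0\in F\smallsetminus S_F\); note that \(\Psi(f_0)^{-1}\Psi(f_mf_0)\) is just \(\Psi(f_0^{-1}f_mf_0)\). You then show that \(\Psi(f_m)\) tends to the identity at points of \(\core(h_F)\) while collapsing \(I_{x_i}\) to an endpoint, so neither convergence alternative holds. Stating the \(k\)-fold dichotomy directly on \(\T\) rather than on \(\T/\langle\tau\rangle\) is a harmless variant, and it only needs the easy direction of the convergence theorem. Your closing remark has the direction backwards, though: the genuine convergence group lives on the quotient \(\T/\langle\tau\rangle\), not on a further cover.

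Two steps need repair. First, the paragraph meant to make Step 2 rigorous fails: in general you cannot find arcs of \(\T\smallsetminus\cI\) with nonempty interior. If \(F\) acts minimally, which is the typical nondiscrete case (a countable dense subgroup of \(\SO(2)\), say), then \(F.x_1\) is dense. Hence \(\cI\) is dense and \(\T\smallsetminus\cI\) has empty interior; positive Lebesgue measure would not give interior anyway. The arcs are unnecessary, and your preceding sentence already has the right idea. Only countably many fibers of \(h_F\) are nondegenerate, and \(h_F(\core(h_F))=\T\). So you can pick \(2k+1\) distinct points \(z_j\in\core(h_F)\) with \(h_F^{-1}(h_F(z_j))=\{z_j\}\), and then \(\Psi(f_m)(z_j)\to z_j\). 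In the collapsing alternative at least \(k+1\) of these points are non-exceptional, yet they would have to converge to at most \(k\) values. This is exactly the paper's choice of \(M>2k\) points. Similarly, your Step 1 claim that a sequence not shrinking some arc must subconverge to a homeomorphism is false for arcs containing an exceptional point. What you actually use is the correct statement that, off \(k\) points, the limit takes at most \(k\) values.

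Second, in the case \(f_m\in S_F\), your reason that \(f_0^{-1}f_mf_0\notin S_F\) for all but finitely many \(m\) is circular. Without further input, \(f_0^{-1}S_Ff_0\cap S_F\) could be infinite. The missing ingredient is hypothesis (iii). By (i) and \(f_0\notin S_F\), the point \(f_0(x_i)\) is not in \(\{x_1,\ldots,x_n\}\), while every nontrivial element of \(S_F\) fixes only \(x_1,\ldots,x_n\). Hence \(f_0^{-1}f_mf_0(x_i)\neq x_i\) for every \(m\), and no ``equal images'' remain to be handled.
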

	
	\begin{proof}
		Assume, without loss of generality, that the action of \(F\) on \(\T\) is
		nondiscrete. Then there exists a sequence of distinct elements
		\(\{f_i\}_{i\in\NN}\subset F\) such that \(f_i\longrightarrow \idid\) pointwise on \(\T\).
		We first show that, after passing to a subsequence if necessary, we may assume that \(f_i\notin S_F\)
		for every \(i\in\NN\).
		
		Indeed, suppose that this is not the case. Then there exists a sequence of distinct
		elements \(\{s_i\}_{i\in\NN}\subset S_F\) such that \(s_i\longrightarrow \idid\) pointwise on \(\T\).
		
		Choose \(f\in F\smallsetminus S_F\), which is possible since \(S_F\lneqq F\). Since the
		points \(x_1,\ldots,x_n\) lie in distinct \(F\)-orbits, we have \(f(x_1)\neq x_j\) for every \(j\in\{1,\ldots,n\}\).
		
		Consider now the conjugate sequence \(f s_i f^{-1}\in F\). It still converges pointwise to the identity on \(\T\), but \(f s_i f^{-1}(x_1)\neq x_1\), so \(f s_i f^{-1}\notin S_F\) for every \(i\). This proves the claim.
		
		We may therefore fix a sequence of distinct elements \(\{f_i\}_{i\in\NN}\subset F\smallsetminus S_F\)
		converging pointwise to the identity on \(\T\). With notation as in	Definition~\ref{d.action_product}, let \(\Psi:F*_\theta G\xrightarrow{\sim} H\) be the associated isomorphism, and let \(h_F\) be semi-conjugacy between the action of \(\Psi(F)\) and the original action of \(F\).
		By the semi-conjugacy, the sequence \(\{\Psi(f_i)\}\) converges pointwise to the identity on \(\core(h_F)\).
		
		Now consider the blown-up interval \(I_{x_1}:=h_F^{-1}(x_1)\). Since \(f_i\notin S_F\),
		we have \(f_i(x_1)\neq x_1\) for every \(i\), while \(f_i(x_1)\to x_1\). After passing to a further subsequence, we may assume that the points \(f_i(x_1)\) are pairwise distinct and converge monotonically to \(x_1\) from one side. For each \(i\), the map \(\Psi(f_i)\) sends \(I_{x_1}\) onto the interval
		\[
		I_{f_i(x_1)}:=h_F^{-1}(f_i(x_1)).
		\]
		Since the fibers \(I_{f_i(x_1)}\) are pairwise disjoint whenever the points \(f_i(x_1)\) are
		distinct, and they all lie in the compact circle, their diameters tend to \(0\). Hence
		\(\Psi(f_i)|_{I_{x_1}}\) converges to a constant map, namely to the endpoint of
		\(I_{x_1}\) determined by the chosen monotone approach of \(f_i(x_1)\) to \(x_1\).
		
		Therefore the sequence \(\{\Psi(f_i)\}_{i\in\NN}\subset H\) converges pointwise to the
		identity on \(\core(h_F)\), but on the interval \(I_{x_1}\) it converges to a constant map.
		In particular, \(\{\Psi(f_i)\}_{i\in\NN}\) does not converge to a homeomorphism of \(\T\).
		
		We first prove that \(H\) is not topologically conjugate to a subgroup of
		\(\psl(2,\RR)\). 
		
		Since every fiber of \(h_F\) meets \(\core(h_F)\), the restriction Since every fiber of \(h_F\) meets \(\core(h_F)\), the restriction \(h_F|_{\core(h_F)}:\core(h_F)\to \T\) is surjective. In particular, \(\core(h_F)\) is uncountable, so we may choose as many	distinct points in \(\core(h_F)\) as needed. Choose three distinct points \(z_1,z_2, z_3\in \core(h_F)\). Since \(\Psi(f_i)\to\idid\) on \(\core(h_F)\), the sequences \(\Psi(f_i)(z_1)\), \(\Psi(f_i)(z_2)\) and \(\Psi(f_i)(z_3)\) converge to three distinct points of \(\T\). Hence the sequence \(\{\Psi(f_i)\}\) does not satisfy alternative~\ref{a.convergence_group} of the convergence group theorem, and, as already observed, it does not satisfy alternative~\ref{b.convergence_group} either. Therefore \(H\) is not a convergence group, and by the convergence group theorem it is not
		topologically conjugate to a subgroup of \(\psl(2,\RR)\).
		
		Now fix \(k\geq 2\), and suppose by contradiction that \(H\) is topologically
		conjugate to a subgroup of \(\psl^{(k)}(2,\RR)\). Then there exists a periodic
		homeomorphism \(\tau\) of order \(k\) commuting with \(H\), such that the induced
		action of \(H\) on the quotient circle \(\T/\langle\tau\rangle\) is topologically
		conjugate to the action of a subgroup of \(\psl(2,\RR)\). Choose
		\(M>2k\) distinct points \(z_1,\ldots,z_M\in \core(h_F)\). Their images under \(\Psi(f_i)\)
		converge to \(M\) distinct points of \(\T\), and hence their projections to the
		quotient circle converge to at least \(M/k>2\) distinct points. As before, this is
		incompatible with the convergence property. Thus the induced action on
		\(\T/\langle\tau\rangle\) cannot be conjugate to a subgroup of \(\psl(2,\RR)\), a
		contradiction.
		
		We conclude that \(H\) is not topologically conjugate to a subgroup of
		\(\psl^{(k)}(2,\RR)\) for any \(k\geq 1\).
	\end{proof}

	The previous theorem yields very few finitely generated M\"obius-like examples. Indeed, if
	the nondiscrete factor, say \(F\), is finitely generated and non-elementary, then by
	\cite[Theorem~C and Remark~1.7]{BonattiCarnevaleTriestino2024} it contains an element
	topologically conjugate to an irrational rotation. By the results of the previous
	subsection, the blow-up of such an element yields a Denjoy-type homeomorphism, so the
	corresponding amalgamated product cannot be M\"obius-like. Thus, in the finitely generated
	M\"obius-like setting, a nondiscrete factor in
	Theorem~\ref{t.main_examples_non_psl} must be elementary.
	
	This motivates a second approach, in which the failure of the convergence property is
	detected not by nondiscreteness, but by a distinguished accumulation behavior at a
	blown-up point. We begin with the relevant notion.

	\begin{definition}
		Let \(F\leq \homeo_+(\T)\), and let \(x\in\T\). We say that \(x\) is a \emph{conical limit point} for the action of \(F\) (also called a \emph{point of approximation} in older terminology) if there exists a sequence of distinct elements \(\{f_n\}\subset F\) and two distinct points \(a,b\in\T\) such that
		\[
		f_n(x)\to a
		\qquad\text{and}\qquad
		f_n|_{\T\smallsetminus\{x\}}\to b
		\quad\text{pointwise.}
		\]
	\end{definition}
	
	Conical limit points are abundant in many classical settings. For instance, if \(F\) is
	cocompact Fuchsian, then every point of \(\T\) is conical. More generally, if \(F\) is a
	finitely generated Fuchsian group, then every point of its limit set \(\Lambda(F)\) which
	is not a parabolic fixed point is conical. In particular, if \(F\) is of the first kind,
	then all points of \(\T\) except the countable set of parabolic fixed points are conical.
	
	To exploit conical limit points in the amalgamated product, we first prove a descent principle for actions topologically conjugate to subgroups of \(\psl^{(k)}(2,\RR)\).
	
	\begin{lemma}\label{l.k_lift_descends} With notation as in Definition~\ref{d.action_product}, let \( H=(F,\bar x)\star_{\theta,\sigma}(G,\bar y) \) be the minimal amalgamated product of \(F\) and \(G\). If \(H\) is topologically conjugate to a subgroup of \(\psl^{(k)}(2,\RR)\), then \(F\) and \(G\) are also topologically conjugate to subgroups of \(\psl^{(k)}(2,\RR)\). 
	\end{lemma}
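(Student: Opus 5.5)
The plan is to restrict the conjugacy to the factor subgroups and then push it down through the semi-conjugacies $h_F$ and $h_G$. By symmetry it suffices to treat $F$. Assume $H$ is topologically conjugate to a subgroup of $\psl^{(k)}(2,\RR)$. Then so is its subgroup $\Gamma_F=\Psi(F)$. Concretely, there is a homeomorphism $\tau$ of order $k$ commuting with $H$ such that the induced action of $H$, hence of $\Gamma_F$, on $\T/\langle\tau\rangle$ is a convergence action. By Definition~\ref{d.action_product}, $\Gamma_F$ is semi-conjugate to $F$ via $h_F$. The nondegenerate fibres of $h_F$ are exactly the intervals $I_\xi=h_F^{-1}(\xi)$, for $\xi\in\bigcup_i F.x_i$. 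By the ping-pong property in item~\ref{vi.action_product}, these intervals form a single $\Gamma_F$-invariant family of pairwise disjoint intervals, which are wandering modulo the stabilizers. So $F$ is obtained from $\Gamma_F$ by collapsing an invariant family of disjoint closed intervals. It remains to show that "being a $k$-fold lift of a convergence action" survives such a collapse.

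\textbf{Step 1: convergence descends under collapse} ($k=1$, or on the quotient circle). Let $\{g_n\}\subset\Gamma_F$ and let $f_n=\Psi^{-1}(g_n)$, so that $h_F g_n=f_n h_F$. Pass to a subsequence on which alternative~\ref{a.convergence_group} or~\ref{b.convergence_group} of the convergence group theorem holds.
\begin{itemize}
\item In case~\ref{a.convergence_group}, with attracting point $a$ and repelling point $b$: for $z\notin\{h_F(b)\}$ choose $w\in h_F^{-1}(z)$. Then $w\neq b$, so $f_n(z)=h_F(g_n(w))\to h_F(a)$. The symmetric argument applies to $f_n^{-1}$. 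This gives alternative~\ref{a.convergence_group} for $\{f_n\}$ with points $h_F(a),h_F(b)$. These points may coincide, which is harmless since the $f_n$ are then eventually trivial, or can be handled by the same limit.
\item In case~\ref{b.convergence_group}, the limit homeomorphism $g$ permutes the collapsed fibres (they are limits of the permuted fibres $g_n(I_\xi)$). Hence $g$ descends to a homeomorphism $\bar g$ with $f_n\to\bar g$.
\end{itemize}
Thus $F$ is a convergence group, and by the Tukia--Gabai--Casson--Jungreis theorem it is conjugate into $\psl(2,\RR)$.

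\textbf{Step 2: the lift} ($k\ge2$). Here I must make $\tau$ descend through $h_F$, i.e.\ show that $\tau$ permutes the family $\{I_\xi\}$. Granting this, $\bar\tau:=h_F\tau h_F^{-1}$ is a well-defined homeomorphism of order $k$ commuting with $F$. Moreover, the collapse commutes with passing to the quotient by $\tau$: the action of $F$ on $\T/\langle\bar\tau\rangle$ is obtained from that of $\Gamma_F$ on $\T/\langle\tau\rangle$ by collapsing the images of the $I_\xi$. Step~1 then applies to these quotient actions, and $F$ is conjugate into $\psl^{(k)}(2,\RR)$.

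\textbf{Main obstacle.} The delicate point is that $\tau$ preserves $\bigcup_\xi I_\xi$, equivalently $\core(h_F)$. I would characterize $\core(h_F)$ intrinsically in terms of $H$. By item~\ref{v.action_product}, $\core(h_G)\subset\bigcup_i I_{x_i}$, and the intervals $I_\xi$ are the $\Gamma_F$-translates of $\overline U$-components. Their endpoints are exactly the points of the form $\gamma(\partial U)$, for $\gamma\in\Gamma_F$. These can be detected as fixed points of the parabolic/hyperbolic elements $\gamma g\gamma^{-1}$, with $g\in\Gamma_G$ fixing $\partial U$ (stabilizer elements), or as accumulation points of fixed points of cyclically reduced words $g f$ that are attracted into $I_{x_i}$ (Lemma~\ref{l.fixed_ping-pong}). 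Since $\tau$ commutes with $H$, it maps fixed-point sets of elements of $H$ to fixed-point sets of the same elements. Hence $\tau$ preserves any subset of $\T$ defined purely in these terms. I would try first to show that the endpoints of the $I_\xi$ are exactly the points isolated from one side in the $H$-invariant closed set of such fixed points, so that $\tau$ preserves them. If that fails, I would instead replace the family $\{I_\xi\}$ by its $\langle\tau\rangle$-saturation and check, using that $\tau$ commutes with $\Gamma_F$ and that $x_i$ has stabilizer $S_F$, that the saturation collapses to the same quotient action $F$.
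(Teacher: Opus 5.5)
\textbf{Where your proposal stands.} Your plan is the paper's own. For $k=1$ you push the two convergence alternatives for $\{\Psi(f_n)\}$ down through $h_F$, which is exactly the paper's Cases~a and~b. (One small slip: when $h_F(a)=h_F(b)$ the $f_n$ need not be eventually trivial, but alternative~\ref{a.convergence_group} allows the two points to coincide, so nothing is lost.) For $k\ge 2$ you descend $\tau$ through $h_F$ and rerun the $k=1$ argument on the quotient circles, as the paper does.

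\textbf{The gap.} For $k\ge2$, the one nontrivial input is that $\tau$ permutes the fibres $I_\xi$ of $h_F$ (and likewise of $h_G$). You only grant this, and neither fallback you propose can deliver it.
\begin{enumerate}
\item Your first fallback looks for points isolated from one side in an $H$-invariant closed set of fixed points. But such a set is nonempty, closed and $H$-invariant, hence all of $\T$ by minimality, so it has no points isolated from one side. Moreover, with trivial stabilizers (the case used in Theorem~\ref{t.conical_examples_non_psl}) there are no stabilizer elements whose fixed points could single out $\partial U$.
\item Your second fallback, collapsing the $\langle\tau\rangle$-saturation of $\{I_\xi\}$, is circular. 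If some $\tau(I_\xi)$ is not contained in a single fibre, collapsing it identifies points with different $h_F$-images. The result is then a proper quotient of the action of $F$, not $F$ itself. It equals $F$ only if the family was already $\tau$-invariant.
\end{enumerate}

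\textbf{How the paper handles it, and how to make it precise.} The paper settles this step by using minimality of $H$ together with the fact that $\tau$ commutes with all of $H$. After the extra wandering intervals have been collapsed, the only intervals canonically attached to the action are the fibres of $h_F$ and $h_G$, so $\tau$ must permute them.

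A precise mechanism, at least when $F$ acts minimally, goes through minimal sets.
\begin{itemize}
\item $\core(h_F)$ is perfect, because two disjoint closed fibres cannot share an endpoint.
\item Any nonempty closed $\Gamma_F$-invariant $C$ satisfies $h_F(C)=\T$, so $C$ contains every point with singleton $h_F$-fibre.
\item Together these show that $\core(h_F)$ is the unique minimal set of $\Gamma_F$.
\item Since $\tau$ commutes with $\Gamma_F$, $\tau(\core(h_F))$ is again a minimal set, hence equals $\core(h_F)$.
\item Therefore $\tau$ permutes the closures of the complementary components of $\core(h_F)$, which are exactly the $I_\xi$.
\end{itemize}
A further intrinsic constraint available in general is that $(\tau U,\tau V)$ is again a proper ping-pong partition for $\Gamma_F$ and $\Gamma_G$.

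Some argument of this kind has to replace your ``Main obstacle'' paragraph before Step~2 is actually proved.
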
 
	
	\begin{proof} 
		We first treat the case \(k=1\). Assume that \(H\) is topologically conjugate to a subgroup of \(\psl(2,\RR)\). Then \(H\) is a convergence group, and therefore so is the subgroup \(\Psi(F)\leq H\).
		
		Let \(\{f_n\}\subset F\) be any sequence of distinct elements. Since \(\Psi:F*_\theta G\xrightarrow{\sim} H\) is injective, the sequence \(\{\Psi(f_n)\}\subset \Psi(F)\) also consists of distinct elements. Passing to a subsequence, one of the two alternatives in the convergence group theorem holds for \(\{\Psi(f_n)\}\). We show that the same alternative then holds for \(\{f_n\}\). This will prove that \(F\) is a convergence group and therefore topologically conjugate to a subgroup of \(\psl(2,\RR)\). 
		
		\smallskip 
		\noindent\textbf{Case a.} Assume that there exist points \(x^\ast,y^\ast\in\T\) such that \[ \Psi(f_n)\to y^\ast \quad\text{pointwise on }\T\smallsetminus\{x^\ast\}, \qquad \Psi(f_n)^{-1}\to x^\ast \quad\text{pointwise on }\T\smallsetminus\{y^\ast\}. \] Let \(h_F:\T\to\T\) be the semi-conjugacy between the action of \(\Psi(F)\) and the original action of \(F\), and set \[ x:=h_F(x^\ast),\qquad y:=h_F(y^\ast). \] We claim that \[ f_n\to y \quad\text{pointwise on }\T\smallsetminus\{x\}, \qquad f_n^{-1}\to x \quad\text{pointwise on }\T\smallsetminus\{y\}. \] Fix \(u\in\T\smallsetminus\{x\}\). Since \(h_F(x^\ast)=x\), the fiber \(h_F^{-1}(u)\) does not contain \(x^\ast\). Choose any \(z\in h_F^{-1}(u)\). Then \(z\neq x^\ast\), so \[ \Psi(f_n)(z)\to y^\ast. \] Applying \(h_F\) and using the semi-conjugacy relation \(h_F\circ\Psi(f_n)=f_n\circ h_F\), we obtain \[ f_n(u)=h_F(\Psi(f_n)(z))\longrightarrow h_F(y^\ast)=y. \] The proof for the inverses is identical. Thus \(\{f_n\}\) satisfies alternative~a. 
		
		\smallskip 
		\noindent\textbf{Case b.} Assume that there exists \(g\in\homeo_+(\T)\) such that \[ \Psi(f_n)\to g \quad\text{pointwise on }\T, \qquad \Psi(f_n)^{-1}\to g^{-1} \quad\text{pointwise on }\T. \] We first show that \(g\) preserves the fibers of \(h_F\). Indeed, if \(z_1,z_2\in\T\) satisfy \(h_F(z_1)=h_F(z_2)\), then for every \(n\), \[ h_F(\Psi(f_n)(z_1)) = f_n(h_F(z_1)) = f_n(h_F(z_2)) = h_F(\Psi(f_n)(z_2)). \] Passing to the limit gives \[ h_F(g(z_1))=h_F(g(z_2)). \] Hence the map \[ \bar g:\T\to\T,\qquad \bar g(u):=h_F(g(z)) \quad\text{for }z\in h_F^{-1}(u), \] is well defined. Similarly, \(g^{-1}\) also preserves the fibers of \(h_F\), so \(\bar g\in\homeo_+(\T)\). Now fix \(u\in\T\), and choose any \(z\in h_F^{-1}(u)\). Then \[ f_n(u)=h_F(\Psi(f_n)(z))\longrightarrow h_F(g(z))=\bar g(u). \] So \(f_n\to \bar g\) pointwise on \(\T\). Applying the same argument to the inverses, we also get \[ f_n^{-1}\to \bar g^{-1} \quad\text{pointwise on }\T. \] Thus \(\{f_n\}\) satisfies alternative~b. 
		
		In either case, every sequence of distinct elements of \(F\) admits a subsequence satisfying one of the convergence-group alternatives. Hence \(F\) is a convergence group. The same argument applies to \(G\). 
		
		\smallskip 
		We now treat the case \(k\geq 2\). Assume that \(H\) is topologically conjugate to a
		subgroup of \(\psl^{(k)}(2,\RR)\). Then there exists a periodic homeomorphism \(\tau\) of
		order \(k\), commuting with \(H\), such that the induced action of \(H\) on the quotient
		circle \(\T/\langle\tau\rangle\) is a convergence group. We claim that \(\tau\) preserves
		the fibers of \(h_F\) and \(h_G\).
		
		Indeed, since we work with the minimal amalgamated product, every wandering interval which
		does not arise from the original blow-up construction has already been collapsed. Thus the
		only surviving nontrivial intervals canonically distinguished by the action are precisely
		the fibers of \(h_F\) and \(h_G\). Because \(\tau\) commutes with the whole group \(H\), it preserves the action-product structure, and therefore permutes these distinguished intervals. Hence \(\tau\) preserves the fibers of \(h_F\)
		and of \(h_G\).
		
		It follows that there exist periodic homeomorphisms \(\bar\tau_F\) and \(\bar\tau_G\) of order \(k\) such that 
		\[ h_F\circ\tau=\bar\tau_F\circ h_F, \qquad h_G\circ\tau=\bar\tau_G\circ h_G. \] 
		In particular, \(h_F\) and \(h_G\) descend to continuous monotone degree-one maps \[ \widehat h_F:\T/\langle\tau\rangle\longrightarrow \T/\langle\bar\tau_F\rangle, \qquad \widehat h_G:\T/\langle\tau\rangle\longrightarrow \T/\langle\bar\tau_G\rangle, \] which semi-conjugate the quotient action of \(\Psi(F)\) to the quotient action of \(F\), and the quotient action of \(\Psi(G)\) to the quotient action of \(G\). 
		
		Now the quotient action of \(H\) on \(\T/\langle\tau\rangle\) is a convergence group by hypothesis. Therefore its subgroups, in particular the quotient actions of \(\Psi(F)\) and \(\Psi(G)\), are also convergence groups. Applying the case \(k=1\) to these quotient actions, we conclude that the quotient actions of \(F\) and \(G\) on \(\T/\langle\bar\tau_F\rangle\) and \(\T/\langle\bar\tau_G\rangle\) are convergence groups. Hence \(F\) and \(G\) are \(k\)-lifted convergence groups, and therefore topologically conjugate to subgroups of \(\psl^{(k)}(2,\RR)\). 
	\end{proof}

	\begin{corollary}\label{c.k_lift_obstruction}
		With notation as in Definition~\ref{d.action_product}, let \(H=(F,\bar x)\star_{\theta,\sigma}(G,\bar y)\) be the minimal amalgamated product of \(F\) and \(G\). If at least one
		of the groups \(F\) or \(G\) is not topologically conjugate to a subgroup of
		\(\psl^{(k)}(2,\RR)\), then \(H\) is not topologically conjugate to a subgroup of
		\(\psl^{(k)}(2,\RR)\).
	\end{corollary}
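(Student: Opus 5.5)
This corollary is the contrapositive of Lemma~\ref{l.k_lift_descends}, so the plan is simply to argue by contradiction. Suppose that \(H=(F,\bar x)\star_{\theta,\sigma}(G,\bar y)\) is the minimal amalgamated product and that \(H\) is topologically conjugate to a subgroup of \(\psl^{(k)}(2,\RR)\). Lemma~\ref{l.k_lift_descends} applies verbatim in this situation: the data \(\Psi\), \(\Gamma_F\), \(\Gamma_G\), \(h_F\), \(h_G\) are exactly those of Definition~\ref{d.action_product}, and minimality is part of the hypotheses. The lemma then gives that both \(F\) and \(G\) are topologically conjugate to subgroups of \(\psl^{(k)}(2,\RR)\). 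This contradicts the assumption that at least one of them is not.

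The only point to check is that the hypotheses of Lemma~\ref{l.k_lift_descends} coincide with those of the corollary. Both are stated for the \emph{minimal} amalgamated product, as obtained in Theorem~\ref{t.action_product} after collapsing a possible exceptional minimal set. By Lemma~\ref{l.product_unicity} this product is well defined up to conjugacy, so the conclusion does not depend on which realization of \(H\) is chosen. I do not expect any real obstacle here. All the substance, in particular the claim that the deck transformation \(\tau\) preserves the fibers of \(h_F\) and \(h_G\) when \(k\ge2\), is already contained in the proof of Lemma~\ref{l.k_lift_descends}.
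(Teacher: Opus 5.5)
Your proposal is correct and matches the paper's treatment: the corollary is stated immediately after Lemma~\ref{l.k_lift_descends} as its contrapositive, with no separate argument. Your check that the hypotheses agree (the minimal amalgamated product with the data of Definition~\ref{d.action_product}) is all that is needed. As you note, all the substantive content, including the claim that \(\tau\) preserves the fibers of \(h_F\) and \(h_G\) when \(k\ge 2\), lies in the proof of that lemma.
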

	
	We now restrict to the case where the factor \(F\) is itself topologically conjugate to a
	subgroup of \(\psl^{(k)}(2,\RR)\). The next lemma shows that the existence of a
	\(k\)-lifted conical limit point already determines this lift order uniquely.

	\begin{lemma}\label{l.k_lifted_conical_hyperbolic} 
		Assume that \(F\le \homeo_+(\T)\) is topologically conjugate to a subgroup of \(\psl^{(k)}(2,\RR)\), and that \(x\in\T\) is a \(k\)-lifted conical limit point for the action of \(F\). Then \(F\) contains a hyperbolic element with exactly \(2k\) fixed points. In particular, if \(F\) is also topologically conjugate to a subgroup of \(\psl^{(j)}(2,\RR)\) for some \(j\ge 1\), then \(j=k\). 
	\end{lemma}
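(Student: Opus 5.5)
The paper has not defined "$k$-lifted conical limit point" before this statement, so the argument will rest on the natural reading. After conjugating, $F\le\psl^{(k)}(2,\RR)$ acts on $\T$ as the $k$-fold cover of the quotient circle $\T/\langle\tau\rangle$, where $\tau$ is the order-$k$ deck rotation commuting with $F$. The quotient action of $\bar F\le\psl(2,\RR)$ is a convergence group. A point $x$ is a \emph{$k$-lifted conical limit point} if its projection $\bar x$ is a conical limit point for $\bar F$, witnessed by a sequence $\bar f_n$ with $\bar f_n(\bar x)\to\bar a$ and $\bar f_n\to\bar b\neq\bar a$ off $\bar x$. The plan is to use this conical sequence to produce a hyperbolic element of $\bar F$, lift it, and count its fixed points.

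\textbf{Step 1: a hyperbolic element in the quotient.} In $\psl(2,\RR)$, a conical limit point is a limit point of a non-elementary group or of a group containing a hyperbolic element. Concretely, pass to a subsequence so that $\bar f_n$ converges in the convergence-group sense: $\bar f_n\to\bar b$ off some point $\bar c$, and $\bar f_n^{-1}\to\bar c$ off $\bar b$. The conical condition forces $\bar c=\bar x$. Now consider $\bar f_m^{-1}\bar f_n$ for $n\gg m$. This map sends a neighbourhood of $\bar x$ close to itself in a contracting way, so for large indices $\bar f_m^{-1}\bar f_n$ maps a small arc around some point strictly inside itself, and its inverse does the same near another point. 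A standard north--south ping-pong argument then shows it is hyperbolic. Alternatively, if $\bar F$ is elementary, I would check that a conical limit point can only occur when $\bar F$ contains a hyperbolic element, since parabolic and elliptic elementary groups have no conical points.

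\textbf{Step 2: lifting and counting.} Let $\bar h\in\bar F$ be hyperbolic, with fixed points $\bar p$ (attracting) and $\bar q$ (repelling), and let $h\in F$ be the lift of $\bar h$ that is actually in $F$. The preimages of $\bar p$ and $\bar q$ are $2k$ points on $\T$, alternating. Since $h$ commutes with $\tau$, $h$ permutes these preimages, possibly by a deck rotation $\tau^j$. If $h$ rotates them, replace $h$ by $h^k$. Then $h^k$ lies in $F$, commutes with $\tau$, projects to $\bar h^k$, and fixes all $2k$ preimage points. On each of the $2k$ arcs between consecutive preimages, $h^k$ is a lift of the north--south dynamics with no interior fixed points. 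So $h^k$ is hyperbolic with exactly $2k$ fixed points.

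The main obstacle is making sure the fixed points of the lift are not rotated by a deck transformation. This is exactly where the "$k$-lifted" hypothesis should enter, presumably by forcing the relevant lift to fix a lift of $\bar x$ or of its limit data; replacing $h$ by $h^k$ handles this in any case. For the final assertion, suppose $F$ is also conjugate into $\psl^{(j)}(2,\RR)$. Every nontrivial element of $\psl^{(j)}(2,\RR)$ has either no fixed points or exactly $2j$ fixed points (a hyperbolic lift has $2j$, a parabolic lift has $j$, and other lifts have none). So the element $h^k$, having $2k$ fixed points, forces $2k\in\{j,2j\}$. To exclude $2k=j$, I would observe that in the $j$-lift these $2k$ fixed points are hyperbolic (alternately attracting and repelling), whereas an element with $j$ fixed points in $\psl^{(j)}$ is a parabolic lift whose fixed points are all parabolic. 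Hence $2k=2j$, i.e.\ $j=k$.
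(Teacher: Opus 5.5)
Your proof is correct and has the same skeleton as the paper's: find a hyperbolic element of $\bar F$, lift it to an element with $2k$ fixed points, then count fixed points in $\psl^{(j)}(2,\RR)$. Two steps are handled differently.

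\textbf{The hyperbolic element.} The paper argues by contradiction. A subgroup of $\psl(2,\RR)$ with no hyperbolic elements is elementary, and such groups have no conical limit points. The paper's wording ``elementary subgroups have no conical limit points'' must be read this way, since a cyclic hyperbolic group is elementary and its fixed points are conical. Your main route builds the hyperbolic element directly from the conical sequence, using only the convergence property, so you avoid the classification of elementary groups.

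You should say explicitly where $\bar a\neq\bar b$ is used. Fix $m$ with $\bar f_m(\bar x)\neq\bar b$, which is possible because $\bar f_m(\bar x)\to\bar a$. Then $\bar f_m^{-1}\bar f_n\to\bar f_m^{-1}(\bar b)$ off $\bar x$, and $\bar f_n^{-1}\bar f_m\to\bar x$ off $\bar f_m^{-1}(\bar b)$. These two points are distinct, so for large $n$ the map has an attracting fixed point near $\bar f_m^{-1}(\bar b)$ and a repelling one near $\bar x$, hence is hyperbolic. Since $\bar x$ is the repelling point, your phrase about contracting a neighbourhood of $\bar x$ into itself is backwards.

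\textbf{The lift.} Your passage to $h^k$ repairs a step the paper glosses over. The paper takes an arbitrary $h\in F$ over $\bar h$ and asserts it has $2k$ fixed points. But such an $h$ equals $\tilde h\tau^i$, where $\tilde h$ is the lift fixing the preimages of $\bar p,\bar q$. For $i\not\equiv 0 \pmod k$ it has no fixed points: a fixed point would lie over $\bar p$ or $\bar q$, where $\tilde h\tau^i$ acts as $\tau^i$. For example, when $k\ge 2$, $F=\langle\tilde h\tau\rangle$ satisfies the hypotheses, and its only element over $\bar h$ is fixed-point free. Because $\tau$ is central, $h^k=\tilde h^k$, so your replacement settles this unconditionally. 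You can drop the speculation that the $k$-lifted hypothesis forces the correct lift.

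\textbf{The count.} Your final step matches the paper's ``dynamical type is preserved'' argument, with the exclusion of $j=2k$ via parabolic lifts made explicit. Fix the sentence saying nontrivial elements of $\psl^{(j)}(2,\RR)$ have $0$ or $2j$ fixed points. As your own parenthesis says, the possibilities are $0$, $j$, and $2j$.
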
 
	
	\begin{proof} 
		By definition of a \(k\)-lifted conical limit point, there exists a periodic homeomorphism \(\tau\) of order \(k\), commuting with \(F\), such that the induced action \(\bar F\) of \(F\) on the quotient circle 
		\[ \pi:\T\longrightarrow \T/\langle\tau\rangle \]
		is topologically conjugate to a subgroup of \(\psl(2,\RR)\), and \(\bar x:=\pi(x)\) is a conical limit point for this quotient action. 
		
		We claim that \(\bar F\) contains a hyperbolic element. Indeed, if every nontrivial element of \(\bar F\) were elliptic or parabolic, then \(\bar F\) would be elementary. But elementary subgroups of \(\psl(2,\RR)\) have no conical limit points, contradicting the fact that \(\bar x\) is conical. Hence \(\bar F\) contains a hyperbolic element \(\bar h\). 
		
		Let \(h\in F\) be any element whose image in the quotient action is \(\bar h\). Since \(F\) is topologically conjugate to a subgroup of \(\psl^{(k)}(2,\RR)\), the element \(h\) is a \(k\)-lift of the hyperbolic element \(\bar h\). Therefore \(h\) is hyperbolic and has exactly \(2k\) fixed points.
		
		For the last assertion, assume that \(F\) is also topologically conjugate to a subgroup of \(\psl^{(j)}(2,\RR)\) for some \(j\ge 1\). Then, under this second conjugacy, the element \(h\) is again sent to a hyperbolic element, since topological conjugacy preserves the dynamical type. But hyperbolic elements of \(\psl^{(j)}(2,\RR)\) have exactly \(2j\) fixed points, whereas \(h\) has exactly \(2k\) fixed points. Hence \(2j=2k\), and therefore \(j=k\).
	\end{proof}
	
	We can now combine the descent lemma with the previous hyperbolicity criterion to obtain
	the desired obstruction for minimal amalgamated products.
	
	\begin{theorem}\label{t.conical_examples_non_psl} 
		Let	\(F,G\leq \homeo_+(\T)\) be countable subgroups and assume that \(F\) is topologically conjugate to a subgroup of
		\(\psl^{(k)}(2,\RR)\). Let
		\[ \bar x=(x_1,\ldots,x_n),\qquad \bar y=(y_1,\ldots,y_n) \]
		be cyclically ordered \(n\)-tuples of points in \(\T\). Assume that, for all distinct \(i,j\in\{1,\ldots,n\}\), 
		\begin{enumerate}[\rm i.] 
			\item \(x_j\notin F.x_i\) and \(y_j\notin G.y_i\); 
			\item \(\stab_F(x_i)=\stab_G(y_i)=\{\idid\}\); 
			\item \(F\) and \(G\) are nontrivial, and at least one of them has order greater than \(2\).
		\end{enumerate} 
		Let \(\sigma\) be any cyclic order-preserving permutation of \(\{1,\ldots,n\}\), and assume, after relabeling if necessary, that \(x_1\) is a \(k\)-lifted conical limit point for the action of \(F\) on \(\T\). Then every minimal amalgamated product 
		\[ H=(F,\bar x)\star_{\sigma}(G,\bar y) \] 
		is not topologically conjugate to a subgroup of \(\psl^{(j)}(2,\RR)\), for any \(j\geq 1\). 
	\end{theorem}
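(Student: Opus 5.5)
Argue by contradiction and split according to the lift order. Suppose that \(H=(F,\bar x)\star_{\sigma}(G,\bar y)\) is topologically conjugate to a subgroup of \(\psl^{(j)}(2,\RR)\) for some \(j\ge1\). By the descent principle, Lemma~\ref{l.k_lift_descends}, \(F\) is then topologically conjugate to a subgroup of \(\psl^{(j)}(2,\RR)\). Since \(x_1\) is a \(k\)-lifted conical limit point and \(F\) is conjugate into \(\psl^{(k)}(2,\RR)\), Lemma~\ref{l.k_lifted_conical_hyperbolic} forces \(j=k\). It therefore suffices to rule out the case \(j=k\), which I would do by contradicting the convergence property of the quotient action.

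Fix \(j=k\) and let \(\tau\) be the order-\(k\) periodic homeomorphism commuting with \(H\), so that the action of \(H\) on \(\T/\langle\tau\rangle\) is a convergence group. As in the proof of Lemma~\ref{l.k_lift_descends}, \(\tau\) preserves the fibers of \(h_F\) and descends to a periodic \(\bar\tau_F\) commuting with \(F\). I would want \(\bar\tau_F\) to coincide with the \(\tau\) in the definition of the \(k\)-lifted conical point. This should follow because the centralizer of a non-elementary \(k\)-lifted group contains a unique order-\(k\) rotation-like element; non-elementarity comes from the hyperbolic element produced in Lemma~\ref{l.k_lifted_conical_hyperbolic}.

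Next, take the conical sequence \(f_n\in F\) on the quotient: \(\bar f_n(\bar x_1)\to\bar a\) and \(\bar f_n\to\bar b\) on the complement of \(\bar x_1\), with \(\bar a\neq\bar b\). Lift this to \(\Psi(f_n)\) and look at the blown-up interval \(I_{x_1}=h_F^{-1}(x_1)\), projected to the quotient circle. Its images \(\Psi(f_n)(I_{x_1})=I_{f_n(x_1)}\) are pairwise distinct fibers, since the stabilizer is trivial and the \(f_n\) are distinct. They are therefore disjoint, and their lengths tend to \(0\). Hence on \(I_{x_1}\), and on its \(\tau\)-translates, \(\Psi(f_n)\) converges to the single point \(\bar a\) in the quotient, while points of \(\core(h_F)\) off the fiber over the \(\tau\)-orbit of \(x_1\) go to \(\bar b\).

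Now pass to a subsequence and apply the convergence alternatives to \(\Psi(f_n)\) in the quotient. Alternative b is excluded because \(\Psi(f_n)\) collapses the nondegenerate interval \(I_{x_1}\). So alternative a holds: \(\Psi(f_n)\to\bar b'\) off a single point \(\bar x^\ast\). But the whole nondegenerate interval \(\pi(I_{x_1})\) is sent to \(\bar a\), and the points of \(\core(h_F)\) are sent to \(\bar b\neq\bar a\). Both sets contain more than one point, which contradicts the alternative. This contradiction is the heart of the argument.

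The main obstacle is the identification of \(\tau\) with the lift structure used in the conical-point definition, that is, the compatibility of the two quotient circles so that \(\pi(x_1)\) is really conical for the quotient action of \(\Psi(F)\) composed with \(h_F\). My first attempt would be to appeal to the fiber-preservation argument already used in Lemma~\ref{l.k_lift_descends}, combined with uniqueness of the \(k\)-fold covering structure for non-elementary groups.
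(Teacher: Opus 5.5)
Your proposal is correct and follows the paper's proof essentially step for step: you reduce to \(j=k\) through Lemma~\ref{l.k_lift_descends} and Lemma~\ref{l.k_lifted_conical_hyperbolic}, use fiber preservation by \(\tau\), and then contradict both convergence alternatives in \(\T/\langle\tau\rangle\), because \(\Psi(f_n)\) collapses \(I_{x_1}\) to a point lying over \(a\) while points of \(\core(h_F)\) accumulate over \(b\neq a\). You rightly flag the compatibility between \(\bar\tau_F\) and the element \(\tau_0\) defining the conical point, which the paper uses silently, and literal uniqueness of an order-\(k\) element in the centralizer is false (already \(\tau_0^{-1}\) is another, and for non-minimal \(F\) there can be others differing on gaps), but what you need does hold: for a suitable \(m\), the map \(\tau_0^{-m}\bar\tau_F\) commutes with \(F\) and fixes the \(2k\) fixed points of the hyperbolic element, so its closed \(F\)-invariant fixed set contains the unique minimal set of \(F\) (there is no finite orbit, since \(\stab_F(x_1)\) is trivial and \(x_1\) is conical), hence \(\langle\tau_0\rangle\) and \(\langle\bar\tau_F\rangle\) have the same orbits on that set, which contains \(x_1\) and the relevant lifts of \(a\) and \(b\).
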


	\begin{proof}
		By Lemma~\ref{l.k_lifted_conical_hyperbolic}, since \(x_1\) is a \(k\)-lifted conical
		limit point for the action of \(F\), the group \(F\) contains a hyperbolic element with
		exactly \(2k\) fixed points. In particular, the same lemma shows that \(F\) can be
		topologically conjugate to a subgroup of \(\psl^{(j)}(2,\RR)\) only when \(j=k\). Thus
		the lift order of \(F\) is uniquely determined.
		
		Assume now by contradiction that the minimal amalgamated product \(H=(F,\bar x)\star_{\sigma}(G,\bar y)\) is topologically conjugate to a subgroup of \(\psl^{(j)}(2,\RR)\) for some \(j\geq 1\).
		Then Lemma~\ref{l.k_lift_descends} implies that \(F\) is also topologically conjugate to
		a subgroup of \(\psl^{(j)}(2,\RR)\). By the previous paragraph, this forces \(j=k\).
		Hence it is enough to rule out conjugacy of \(H\) into \(\psl^{(k)}(2,\RR)\).
		
		Suppose therefore that \(H\) is topologically conjugate to a subgroup of
		\(\psl^{(k)}(2,\RR)\). Let \(\tau\) be the periodic homeomorphism of order \(k\)
		commuting with \(H\), so that the induced action of \(H\) on the quotient circle
		\(\T/\langle\tau\rangle\) is topologically conjugate to a subgroup of \(\psl(2,\RR)\).
		
		By the proof of Lemma~\ref{l.k_lift_descends}, the homeomorphism \(\tau\) preserves the fibers of \(h_F\), and therefore \(h_F\) descends to a semi-conjugacy between the quotient action of \(\Psi(F)\) and the quotient action of \(F\). Since \(x_1\) is a \(k\)-lifted conical limit point for the action of \(F\), its image in the quotient circle is a conical limit point for the quotient action of \(F\). Choose a conical sequence \(\{f_i\}_{i\in\NN}\subset F\) associated to this point. Thus there exist two distinct points \(a,b\) in the quotient circle such that 
		\[ f_i(\bar x_1)\to a, \qquad f_i|_{\T/\langle\tau\rangle\smallsetminus\{\bar x_1\}}\to b \quad\text{pointwise.} \] 
		
		Let \(I_{x_1}:=h_F^{-1}(x_1)\). Since \(\stab_F(x_1)=\{\idid\}\), the points \(f_i(x_1)\) are pairwise distinct. For each \(i\), the map \(\Psi(f_i)\) sends \(I_{x_1}\) onto the interval 
		\[ I_{f_i(x_1)}:=h_F^{-1}(f_i(x_1)). \] 
		After passing to a subsequence we may assume that the intervals \(I_{f_i(x_1)}\) shrink toward one endpoint of the fiber over \(a\). Hence \(\Psi(f_i)|_{I_{x_1}}\) converges pointwise to a constant map; denote its value by \(a^\ast\). 
		
		On the other hand, if \(z\in \core(h_F)\) satisfies \(h_F(z)\neq x_1\), then by the semi-conjugacy relation the projection of \(\Psi(f_i)(z)\) to the quotient circle converges to \(b\). Passing to a further subsequence if necessary, we may assume that \(\Psi(f_i)(z)\) converges to some point \(b^\ast\) lying over \(h^{-1}_F(b)\). Since \(a\neq b\), the fibers over \(a\) and \(b\) are disjoint, and therefore \(a^\ast\neq b^\ast\). 
		
		Choose two distinct points \(u_1,u_2\in I_{x_1}\) and two distinct points \(z_1,z_2\in \core(h_F)\) such that \(h_F(z_j)\neq x_1\) for \(j=1,2\). Then \(\Psi(f_i)(u_1)\) and \(\Psi(f_i)(u_2)\) both converge to \(a^\ast\), while, after passing to a further subsequence if necessary, \(\Psi(f_i)(z_1)\) and \(\Psi(f_i)(z_2)\) converge to points \(b_1^\ast,b_2^\ast\) lying over \(h^{-1}_F(b)\). In particular, \(a^\ast\neq b_1^\ast\) and \(a^\ast\neq b_2^\ast\).
		
		It follows that no choice of a single exceptional point can make the sequence \(\{\Psi(f_i)\}\) converge pointwise to a constant on its complement. Hence \(\{\Psi(f_i)\}\) does not satisfy alternative~\ref{a.convergence_group} of the convergence-group theorem. Since it also does not converge to a homeomorphism, it does not satisfy alternative~\ref{b.convergence_group} either. 
		
		Passing to the quotient by \(\tau\), we obtain a sequence in the quotient action of \(H\) which still satisfies neither alternative~\ref{a.convergence_group} nor alternative~\ref{b.convergence_group} of the convergence-group theorem. This contradicts the fact that the quotient action of \(H\) is topologically conjugate to a subgroup of \(\psl(2,\RR)\). Therefore \(H\) is not topologically conjugate to a subgroup of \(\psl^{(k)}(2,\RR)\). Since \(k\) is the only possible lift order for \(F\), it follows that \(H\) is not topologically conjugate to a subgroup of \(\psl^{(j)}(2,\RR)\) for any \(j\geq 1\). 
	\end{proof}
	
	\begin{remark}
		Theorem~\ref{t.conical_examples_non_psl} is stated for trivial stabilizers because the
		conical-limit mechanism is incompatible with the parabolic regime arising from
		nontrivial stabilizers. Indeed, by Theorem~\ref{t.main_examples_2n}, in order for the
		amalgamated product to remain in the class of actions with at most \(2n\) fixed points,
		the stabilizer subgroup must consist of elements fixing exactly the \(n\) points
		\(x_1,\ldots,x_n\) and \(y_1,\ldots,y_n\). In the \(\psl^{(n)}(2,\RR)\)-like setting,
		elements with exactly \(n\) fixed points are precisely the parabolic ones. Hence the
		points \(x_i\) and \(y_i\) are parabolic fixed points of the original actions. But
		parabolic fixed points are not conical limit points, so the argument of Theorem~\ref{t.conical_examples_non_psl} cannot be applied in
		that setting; see \cite{BeardonMaskit74,Tukia98}.
	\end{remark}
	
	\bigskip
	
	{\small \subsection*{Acknowledgments}
		This article is based on work carried out during the author's PhD at Universit\'e Bourgogne
		Franche-Comt\'e, under the supervision of Christian Bonatti and Michele Triestino. The author is
		deeply grateful to both of them for their guidance, encouragement, and many valuable discussions.
		This work was partially supported by the project MATH AMSUD DGT -- Dynamical Group Theory
		(22-MATH-03) and the project ANR Gromeov (ANR-19-CE40-0007).
		.}

\end{document}